\definecolor{rouge}{rgb}{0.7,0.00,0.00}
\definecolor{vert}{rgb}{0.00,0.5,0.00}
\definecolor{bleu}{rgb}{0.00,0.00,0.8}
\newtheorem{theorem}{Theorem}[section]
\newtheorem*{theorem*}{Theorem}
\newtheorem{lemma}[theorem]{Lemma}
\newtheorem{definition}[theorem]{Definition}
\newtheorem{corollary}[theorem]{Corollary}
\newtheorem{proposition}[theorem]{Proposition}
\newtheorem{condition}{Condition}
\newtheorem{conditionA}{A\kern-0.1mm}
\newtheorem{conditionB}{B\kern-0.1mm}
\newtheorem{conditionC}{C\kern-0.1mm}
\theoremstyle{definition}
\newtheorem{example}[theorem]{Example}
\newtheorem{remark}[theorem]{Remark}
\def \eref#1{\hbox{(\ref{#1})}}
\numberwithin{equation}{section}
\def\geq{\geqslant}
\def\leq{\leqslant}
\def\RR{\mathbb{R}}
\def\PP{\mathbb{P}}
\def\EE{\mathbb{E}}
\def\NN{\mathbb{N}}
\def\cM{{\mathcal M}}
\def\sI{{\mathscr I}}
\def\sJ{{\mathscr J}}
\def\cP{{\mathcal  P}}
\def\cL{{\mathcal L}}
\def\vare{{\varepsilon}}
\def \eref#1{\hbox{(\ref{#1})}}
\def\EE{\mathbb{ E}}
\begin{document}

\title[Strong averaging principle for nonautonomous multi-scale SPDEs]
{Strong averaging principle for nonautonomous multi-scale SPDEs with fully local monotone and almost periodic coefficients}

\author{Mengyu Cheng}
\address{Cheng M.: School of Mathematical Sciences, Dalian University of Technology, Dalian, 116024, People's Republic of China}
\email{mycheng@dlut.edu.cn}

\author{Xiaobin Sun}
\address{Sun, X.: School of Mathematics and Statistics/RIMS, Jiangsu Normal University, Xuzhou, 221116, People's Republic of China}
\email{xbsun@jsnu.edu.cn}

\author{Yingchao Xie}
\address{Xie, Y.: School of Mathematics and Statistics/RIMS,  Jiangsu Normal University, Xuzhou, 221116, People's Republic of China}
\email{ycxie@jsnu.edu.cn}

\begin{abstract}
In this paper, we consider a class of nonautonomous multi-scale stochastic partial differential equations with fully local monotone coefficients. By introducing the evolution system of measures for time-inhomogeneous Markov semigroups, we study the averaging principle for such kind of system. Specifically, we first prove the slow component in the multi-scale stochastic system converges strongly to the solution of an averaged equation, whose coefficients retain the dependence of the scaling parameter.
Furthermore, if the coefficients satisfy uniformly almost periodic conditions, we establish that the slow component converges strongly to the solution of another averaged equation, whose coefficients are independent of the scaling parameter.
The main contribution of this paper extends the basic nonautonomous framework investigated by Cheng and Liu in \cite{CL2023} to a fully coupled framework, as well as the autonomous framework explored by Liu et al. in \cite{LRSX2023} to the more general nonautonomous framework. Additionally, we improve the locally monotone coefficients discussed in \cite{CL2023,LRSX2023} to the fully local monotone coefficients, thus our results can be applied to a wide range of cases in nonlinear nonautonomous stochastic partial differential equations,
such as multi-scale stochastic Cahn-Hilliard-heat equation and
multi-scale stochastic liquid-crystal-porous-media equation.
\end{abstract}
\date{\today}
\subjclass[2000]{60H15; 35Q30; 70K70}
\keywords{Fully local monotonicity; Averaging principle; nonautonomous SPDE; Strong convergence; Multi-scale; Almost periodic}

\maketitle
\tableofcontents
\section{Introduction}

\subsection{Background and motivation}
Multi-scale systems are widely used in various fields like nonlinear oscillations, chemical kinetics, biology, and climate dynamics, see e.g. \cite{BR2017,EE2003} and the references therein. Typically, multi-scale stochastic differential equations (SDEs) can be represented by two interconnected SDEs that correspond to the "slow" and "fast" components, as established in the foundational work by Khasminskii in \cite{K1968}. The averaging principle plays a crucial role in characterizing the asymptotic behavior of the slow components, indicating that the slow component will converge to the so-called averaged equation, where coefficients are derived by averaging out the fast component, thereby making it significantly simpler than the original equation.

Since the pioneering works \cite{C2009, CF2009} by Cerrai and Freidlin in 2009, which first established the averaging principle for stochastic reaction-diffusion equations,
there has been a growing interest among researchers in the averaging principle for multi-scale stochastic partial differential equations (SPDEs) over the last fifteen years, see e.g. \cite{B2012,C2011,De2022,DSXZ2018,FWL2015,G2023,GS2024,HLL2022,PXY2017,RXY2023,SG2022,SX2023,WR2012,WY2022}. Specifically, Cerrai in \cite{C2009} considered a class of multi-scale SPDEs with Dirichlet boundary conditions in a bounded domain $D \subset \mathbb{R}^{d} (d > 1)$:
\begin{eqnarray*}\label{1.1}
\left\{
\begin{array}{l}
\displaystyle
d u_{\varepsilon}(t) = \left[A_1 u_{\varepsilon}(t) + B_1(u_{\varepsilon}(t), v_{\varepsilon}(t))\right]dt + G_1(u_{\varepsilon}(t), v_{\varepsilon}(t)) dW^{Q_1}_t, \vspace{2mm}\\
\displaystyle
d v_{\varepsilon}(t) = \frac{1}{\varepsilon}\left[A_2 v_{\varepsilon}(t) + B_2(u_{\varepsilon}(t), v_{\varepsilon}(t))\right]dt + \frac{1}{\sqrt{\varepsilon}}G_2(u_{\varepsilon}(t), v_{\varepsilon}(t)) dW^{Q_2}_t, \vspace{2mm}\\
u_{\varepsilon}(0) = x \in L^{2}(D), v_{\varepsilon}(0) = y \in L^{2}(D),
\end{array}
\right.
\end{eqnarray*}
where $W^{Q_1}_t$ and $W^{Q_2}_t$ are Gaussian noises with covariance operators $Q_1$ and $Q_2$, respectively. The operators $A_1$ and $A_2$ are second-order uniformly elliptic operators. The coefficients $B_1, B_2: L^{2}(D) \times L^{2}(D) \rightarrow L^{2}(D)$, and $G_1, G_2: L^{2}(D) \times L^{2}(D) \rightarrow \mathcal{L}(L^{2}(D); L^1(D)) \cap \mathcal{L}(L^{\infty}(D); L^2(D))$ are Lipschitz continuous. Furthermore, assume the existence of two Lipschitz continuous mappings $\bar{B}_1: L^2(D) \rightarrow L^2(D)$ and $\bar{G}_1: L^2(D) \rightarrow \mathcal{L}(L^{\infty}(D); L^2(D))$ such that for any $T > 0$, $t \geq 0$, and $x, y \in L^2(D)$,
\begin{align*}
&\mathbb{E}\left|\frac{1}{T}\int_{t}^{t+T} \langle B_1(x, v^{x,y}(s)), h \rangle_{L^{2}(D)} ds - \langle \bar{B}_1(x), h \rangle_{L^{2}(D)}\right| \\
\leq&\alpha(T)(1 + |x|_{L^{2}(D)} + |y|_{L^{2}(D)}).
\end{align*}
For any $h \in L^2(D)$ and $h, k \in L^{\infty}(D)$ satisfying
\begin{align*}
&\left|\frac{1}{T}\int_{t}^{t+T}\mathbb{E}\langle G_1(x, v^{x,y}(s))h, G_1(x, v^{x,y}(s))k \rangle_{L^{2}(D)} ds - \langle \bar{G}_1(x)h, \bar{G}_1(x)k \rangle_{L^{2}(D)}\right| \\
\leq&\alpha(T)(1 + |x|^2 + |y|) |h|_{L^{\infty}(D)} |k|_{L^{\infty}(D)},
\end{align*}
where $\alpha(T) \rightarrow 0$ as $T \rightarrow \infty$. Here, $v^{x,y}(t)$ is the unique solution to the following frozen equation:
\begin{align}
d v^{x,y}(t) = A_2 v^{x,y}(t) dt + B_2(x, v^{x,y}(t)) dt + G_2(x, v^{x,y}(t)) dW^{Q_2}_t, \quad v^{x,y}(0) = y.\label{IfrozenE}
\end{align}
Cerrai \cite{C2009} proved that the slow component $u_{\varepsilon}$ converges in distribution to $\bar{u}$, i.e.,
$$
\mathcal{L}(u^{\varepsilon}) \rightarrow \mathcal{L}(\bar{u}) \quad \text{in} \quad C([0,T]; L^{2}(D)),
$$
where $\mathcal{L}(u^{\varepsilon})$ denotes the distribution of $u^{\varepsilon}$, and $\bar{u}$ is the unique solution to the following averaged equation:
$$
d \bar{u}(t) = A_1 \bar{u}(t) dt + \bar{B}_1(\bar{u}(t)) dt + \bar{G}_1(\bar{u}(t)) dW^{Q_1}_t.
$$
Moreover, in the case the diffusion coefficient $G_1$ in the slow equation does not depend on the fast oscillating variable $v_{\varepsilon}$, it is also proved that the convergence of $u^{\varepsilon}$ to $\bar{u}$ is in probability.

Certainly, a key step of proving the averaging principle is understanding the related averaging equation, with a primary focus on how to precisely define the averaging coefficient. Based on the general theory of ergodicity, the averaging coefficient is influenced by the invariant measure of the frozen equation. More precisely, if the frozen equation \eref{IfrozenE} admits a unique invariant measure $\mu^{x}$, then it follows
$$
\bar{B}_1(x)=\int_{L^2(D)}B_1(x,y)\mu^{x}(dy),\quad \bar{G_1}(x)=\sqrt{S(x)},
$$
where $S(x): L^2(D)\rightarrow \mathcal{L}(L^{2}(D))$ satisfies
$$
\langle S(x)h, k\rangle = \int_{L^2(D)}\langle G_1(x, z)h, G_1(x, z)k\rangle \mu^x(dz),\quad  h,k\in L^2(D).
$$

The majority of framework concentrate on the case where the frozen equation is an autonomous (time-independent) SPDEs. This focus is due to the extensive theoretical work on the existence and uniqueness of invariant measures. However, nonautonomous (time-dependent) frozen equation exhibits richer dynamic characteristics, leading to the understanding of the so-called equilibrium point becomes extremely complex, that is, considering the following frozen equation:
\begin{align*}
d v^{x,y}(t) = A_2(t) v^{x,y}(t) dt + B_2(t,x, v^{x,y}(t)) dt + G_2(t,x, v^{x,y}(t)) dW^{Q_2}_t.
\end{align*}
Then the corresponding multi-scale SPDEs can be expressed in the following manner:
\begin{eqnarray}\label{1.2}
\left\{
\begin{array}{l}
\displaystyle
d u_{\varepsilon}(t) = \left[A_1 u_{\varepsilon}(t) + B_1(u_{\varepsilon}(t), v_{\varepsilon}(t))\right]dt + G_1(u_{\varepsilon}(t), v_{\varepsilon}(t)) dW^{Q_1}_t, \vspace{2mm}\\
\displaystyle
d v_{\varepsilon}(t) = \frac{1}{\varepsilon}\left[A_2(t/\varepsilon) v_{\varepsilon}(t) + B_2(t/\varepsilon,u_{\varepsilon}(t), v_{\varepsilon}(t))\right]dt + \frac{1}{\sqrt{\varepsilon}}G_2(t/\varepsilon,u_{\varepsilon}(t), v_{\varepsilon}(t)) dW^{Q_2}_t, \vspace{2mm}\\
u_{\varepsilon}(0) = x \in L^{2}(D), v_{\varepsilon}(0) = y \in L^{2}(D).
\end{array}
\right.
\end{eqnarray}
It is important to mention that the system described above is nonautonomous.

The behaviors of the nonautonomous systems will change over time due to external factors
or internal variations, such as the learning models related to neuronal activity in
\cite{GW2012}. To our knowledge, there are few studies on this topic.
Regarding the nonautonomous multi-scale SDEs,
Wainrib \cite{W2013} and Uda \cite{U2021} examined the strong averaging principle
when the coefficients of the fast equation are time-periodic.
Recently, Sun et al. \cite{SWX2024} applied the method of nonautonomous
Poisson equations to investigate the convergence rates in both strong and weak senses. Shi et al. \cite{SLG2025} use the tightness method to investigate the averaging principle for non-autonomous slow-fast McKean-Vlasov SDEs with almost periodic coefficients in the fast component.
When the slow equation's drift and diffusion terms additionally depend on
a highly oscillating time component, \cite{SWX2025,WXXY2024} established the corresponding
averaging principles.
Notably, when the timescales of the highly oscillating time component and the fast process
differ, and the fast equation is autonomous, \cite{CLR2023} showed three
types of averaging principles for SDEs with polynomial nonlinearity.

In the context of the nonautonomous multi-scale SPDEs described in \eref{1.2}, Cerrai and Lunardi in \cite{CL2017} explored the averaging principle for \eref{1.2} when the coefficients
in the fast equation satisfy the almost periodic condition. Li et al. \cite{LSWX2025} study the strong averaging principle for a class of nonautonomous slow-fast SPDEs driven by $\alpha$-stable processes for $\alpha\in(1,2)$.

Therefore, the primary aim of this paper is to study the averaging principle for a large class of nonautonomous multi-scale SPDEs within the (generalized) variational framework, i.e., fully locally monotone and strongly monotone coefficients for the slow and fast equations respectively. Specially, we consider the following multi-scale SPDEs:
\begin{equation}\label{main equation}\left\{\begin{array}{l}
\displaystyle
dX^{\varepsilon}_t=\left[A(t/\varepsilon,X^{\varepsilon}_t)+F(t/\varepsilon,X^{\varepsilon}_t, Y^{\varepsilon}_t)\right]dt
+G_1(t/\varepsilon,X^{\varepsilon}_t)d W^{1}_{t},\\
\displaystyle
dY^{\varepsilon}_t=\frac{1}{\varepsilon}B(t/\varepsilon,X^{\varepsilon}_t, Y^{\varepsilon}_t)dt
+\frac{1}{\sqrt{\varepsilon}}G_2(t/\varepsilon,X^{\varepsilon}_t, Y^{\varepsilon}_t)d W^{2}_{t},\\
X^{\varepsilon}_0=x\in H_1, Y^{\varepsilon}_0=y\in H_2,\end{array}\right.
\end{equation}
where $\varepsilon>0$ is a small parameter describing the ratio of the time scale between the slow component $X^{\varepsilon}_t$
and the fast component $Y^{\varepsilon}_t$, $\{W_t^1\}_{t\geq 0}$ and $\{W_t^2\}_{t\in\mathbb{R}}$ are two cylindrical Wiener process on $U_1$ and $U_2$, respectively.
$A:\RR\times V_1\rightarrow V_1^*$, $F:\RR\times H_1\times H_2\rightarrow H_1$,
$G_1:\RR\times H_1\rightarrow L_2(U_1,H_1)$, $B:\RR\times V_1\times V_2\rightarrow V_2^*$,
$G_2:\RR\times H_1\times H_2\rightarrow L_2(U_2,H_2)$
and the detailed definitions of spaces $H_i,V_i,U_i$ ($i=1,2,$) are presented in the next section. Note that the coefficient $G_1$ is independent of the fast component, which is a crucial setting ensures the validity of the strong averaging principle.

\subsection{Main techniques} \label{subsection 1.2}

Note that in the present setting \eqref{main equation}, the corresponding frozen equation is the following nonautonomous SPDEs:
\begin{equation}
dY^{x,y}_{t}=B(t, x, Y^{x,y}_{t})\,dt+G_2(t,x,Y^{x,y}_{t})\,dW^2_t,\quad x\in H_1.\label{TDFrozenE}
\end{equation}
Instead of using the concept of invariant measure for time-homogeneous semigroup, we need to introduce an evolution system of measures for its time-inhomogeneous semigroup $\{P^x_{s,t}\}_{t\geq s}$ of time-inhomogeneous SPDE \eref{TDFrozenE}, see e.g. \cite{CL2021,DR2006,DR2008}.
Specially, we call $\{\mu^x_t\}_{t\in \RR}$ is an {\em evolution system of measures} for the time-inhomogeneous semigroup $\{P^x_{s,t}\}_{t\geq s}$, if
$$
\int_{H_2}P^x_{s,t} \varphi(y)\,\mu^x_s(dy)=\int_{H_2}\varphi(y)\,\mu^x_t(dy),\quad
s\leq t,\varphi\in C_b(H_2).
$$
To achieve this, we must extend the SDE \eqref{TDFrozenE} to be defined for all $t\in \RR$ rather than just $t\in \RR_+$. This extension requires, in particular, that the noise $W^2_t$ to be well-defined for all $t\in \RR$; see Subsection \ref{sub3.2} for details.

Therefore, it is reasonable to define the averaged coefficients by averaging the drift coefficients $F$ with respect to the evolution system of measures $\{\mu^x_t\}_{t\in \RR}$, i.e.,
\begin{equation}\label{e:drift}
\bar{F}(t,x)=\int_{H_2}F(t,x,y)\,\mu^x_t(dy).
\end{equation}
Thus the corresponding averaged equation can be expressed in the following form:
\begin{equation}\label{AVE1}
d\bar{X}^{\varepsilon}_{t}=\left[A(t/\varepsilon,\bar X^{\varepsilon}_t)+\bar{F}(t/\varepsilon,\bar{X}^{\varepsilon}_t)\right]dt+G_1(t/\varepsilon, \bar{X}^{\varepsilon}_t)\,d W^1_t,\quad\bar{X}^{\varepsilon}_{0}=x.
\end{equation}
Hence, the first purpose of this paper is to prove
 \begin{equation*}
\lim_{\varepsilon \rightarrow 0}\EE \left(\sup_{t\in [0, T]}\|X_{t}^{\varepsilon}-\bar{X}^{\varepsilon}_{t} \|^{2p}_{H_1}\right)=0,\quad \forall p\geq 1, T>0, \label{IR1}
\end{equation*}
where $\bar{X}^{\varepsilon}:=\{\bar{X}^{\varepsilon}_t\}_{t\ge0}$ is the solution to the averaged equation \eqref{AVE1}.
It should be noted that even if we assume that $F$ is time-independent,
the time-dependent $\{\mu_t^x\}_{t\in\RR}$ still implies that $\bar{F}$ constructed in \eqref{e:drift} has a time oscillating component.

\vspace{0.1cm}
Note that the coefficients in the averaged equation \eref{AVE1} retain the dependence of the scaling parameter $\varepsilon$.
Therefore, the second purpose of this paper is to further simplify \eqref{AVE1} by averaging over time $t$.
A crucial step in our analysis is to identify an appropriate function $\bar{F}:H_1\to H_1$ such that
\begin{align}
\lim_{T\rightarrow+\infty}\sup_{t\geq 0}\left\|\frac{1}{T}\int_{t}^{t+T}\bar{F}(s,x)ds-\bar{F}(x)\right\|_{H_1}=0\label{e:Cb}
\end{align}
holds for any $x$ on any bounded subset. This condition is a standard requirement in the studying the averaging principle for stochastic system \eqref{AVE1}, see e.g. \cite[(G1)]{CL2023}.
In order to achieve \eqref{e:Cb},
we will additionally assume that the coefficients $F$, $B$ and $G_2$  satisfy some almost periodic properties
in time $t$. In fact, we will assume that $B$ and $G_2$ are almost periodic in time $t$ uniformly with respect to space
component on any bounded subset (in short, uniformly almost periodic), which implies that
evolution system of measures $\{\mu_t^x\}_{t\in\RR}$ is uniformly almost periodic (see Lemma \ref{Le5.2} for details).
Based on the uniformly almost periodic property of $F$
and $\{\mu_t^x\}_{t\in\RR}$, we obtain that
the averaged coefficient $\bar{F}(t,x)$ in \eref{e:drift} is also uniformly almost periodic (see Lemma \ref{FAP} for details), thus
$$
\bar{F}(x):=\lim_{T\rightarrow+\infty}\frac{1}{T}\int_{t}^{t+T}\bar{F}(s,x)ds
$$
exists and this limit does not depend on $t$ and uniformly with respect to $x$ on any bounded subset; see \cite[Theorem I.3.2]{Besi55} or \cite[Theorem 3.4]{CL2017}.


Additionally, when averaging over time in terms $A(t/\varepsilon,x)$ and $G_1(t/\varepsilon,x)$, we suppose that there exist
$\bar{A}:V_1\rightarrow V_1^*$ and
$\bar{G}_1: H_1\to L_{2}(U_1; H_1)$ such that for any $x\in V_1$,
$\lim_{t\rightarrow+\infty}\|A(t,x)-\bar{A}(x)\|_{V_1^*}=0$, and
for any $t\in\RR$ ,
\begin{align*}
\lim_{T\to+\infty}\frac{1}{T}\int^{t+T}_t \|G_1(s,x)-\bar{G}_1(x)\|^2_{L^2(U_1;H_1)}ds=0.
\end{align*}
Consequently, the corresponding averaged equation can be formulated as follows:
\begin{equation}
d\bar{X}_{t}=\left[\bar{A}(\bar{X}_{t})+\bar{F}(\bar{X}_t)\right]\,dt+\bar{G_1}(\bar{X}_t)\,d W^1_t,\quad\bar{X}_{0}=x. \label{AVE2}
\end{equation}
As a result, we get our second main result:
 \begin{equation*}
\lim_{\varepsilon \rightarrow 0}\EE \left(\sup_{t\in [0, T]}\|X_{t}^{\varepsilon}-\bar{X}_{t} \|^{2p}_{H_1}\right)=0,\quad \forall p\geq 1, T>0,
\end{equation*}
where $\bar{X}$ is the solution to averaged equation \eqref{AVE2}.

\subsection{Contributions}  Despite that the averaging principle was established for
nonautonomous slow-fast systems of stochastic reaction-diffusion equations with almost periodic coefficients in \cite{CL2017},
the mild solution method presented in \cite{CL2017} is not applicable within the variational framework.
So we will utilize techniques from the variational approach to study its averaging principle, which is inspired from \cite{LRSX2023}.
Compared with the convergence in probability in \cite{CL2017},
we obtain the strong convergence in $L^p$ sense for any $p\geq1$.
Meanwhile, although without the almost periodic condition on the coefficients,
we can still obtain a simplified averaged equation,
even whose coefficients still depends on the time scale parameter.
Furthermore, in order to get the strong convergence under the almost periodic condition, we need to show that the evolution system of measures
$\{\mu_t^x\}_{t\in\RR}$ is uniformly almost periodic (see Lemma \ref{Le5.2}), which is also a new result.

Additionally, we will employ the stopping time technique, which is frequently used to overcome challenges arising from nonlinear terms. The main contribution of this paper extends the basic nonautonomous framework investigated by Cheng and Liu in \cite{CL2023} to a fully coupled framework, as well as the autonomous framework explored by Liu et al. in \cite{LRSX2023} to the more general nonautonomous framework. By the way, we improve the locally monotone coefficients discussed in \cite{CL2023,LRSX2023} to the fully local monotone coefficients, thus our results can be applied to a wide range of cases in nonlinear nonautonomous stochastic partial differential equations, such as the nonautonomous stochastic porous medium equation, the nonautonomous stochastic $p$-Laplace equation, the nonautonomous stochastic Burgers type equation and the nonautonomous stochastic 2D Navier-Stokes equation, as well as quasilinear SPDEs, convection diffusion equation, cahn-Hilliard equation, 2D Liquid crystal model as discussed in \cite[Section 4]{RSZ2024}. It is also noted that the averaging principle for autonomous multi-scale SPDEs with fully local monotone coefficients has been established in \cite{HLYY2025} recently, however we here mainly focus on the nonautonomous framework.

\vspace{1mm}
The rest of the paper is organized as follows. In Section \ref{Sec2},
we first present some notations and suitable assumptions, then we formulate our main result in Section \ref{Sec2.1}.
And in Section \ref{Sec2.2}, we will give some examples to illustrate the wide applicability of our result.
In Section \ref{Sec3}, we give some apriori estimates of the solution and study the evolution system of measures of the nonautonomous SPDE. Section \ref{Sec4} is devoted to proving the main results.
Throughout the paper, $C$, $C_T$ and $C_{p,T}$ denote positive constants which may change from line to line, where $C_T$ and $C_{p,T}$ are
used to emphasize that the constants depend on $T$ and $p,T$ respectively.

\section{Main results and examples} \label{Sec2}

\subsection{Assumptions and main results}\label{Sec2.1}
We first present some notations. For $i=1,2$, let $(H_i, \|\cdot\|_{H_i})$ be a separable Hilbert spaces with inner product $\langle\cdot,\cdot\rangle_{H_i}$ and $H^{*}_i$ its dual. Let $(V_i, \|\cdot\|_{V_i})$ be a reflexive Banach space, such that $V_i\subseteq H_i$ continuously and densely. Then for its dual space $V^{*}_i$ it follows that $H^{*}_i\subseteq V^{*}_i$ continuously and densely. Identifying $H_i$ and $H^{*}_i$ via the Riesz isomorphism we have that
$$
V_i\subseteq H_i\equiv H^{*}_i\subseteq V^{*}_i
$$
is a Gelfand triple. Let $_{V^{*}_i}\langle\cdot, \cdot\rangle_{V_i}$ be the dualization between $V^{*}_i$ and $V_i$. Then it follows that
$$
_{V^{*}_i}\langle z_i, v_i\rangle_{V_i} =\langle z_i,v_i\rangle_{H_i},\quad \text{for all}~z_i\in H_i, v_i\in V_i.
$$

\vspace{0.1cm}
For $i=1,2$, let $\{W^{i}_t\}_{t\geq0}$ be a cylindrical $\mathscr{F}_t$-Wiener process in a separable Hilbert space $(U_i, \|\cdot\|_{U_i})$ on a probability space $(\Omega,\mathscr{F},\mathbb{P})$ with natural filtration $\mathscr{F}_{t}$, that is,
$$
W^{i}_t=\sum_{k\in\mathbb{N}_{+}}W^{i,k}_te_k,
$$
where $\{e_{i,k}\}_{k\in \mathbb{N}_{+}}$ is an orthonormal basis of $U_i$ and $\{W^{i,k}_t\}_{k\in\mathbb{N}_{+}}$ is a sequences of independent one dimensional standard Brownian motion. Let $L_{2}(U_i,H_i)$ be the space of Hilbert-Schmidt operators from $U_i$ to $H_i$.
The norm on $L_{2}(U_i,H_i)$ is defined by
$$\|S\|^2_{L_{2}(U_i,H_i)}:=\sum_{k\in \mathbb{N}_{+}}\|S e_{i,k}\|^2_{H_i},\quad S\in L_{2}(U_i,H_i),$$
We also assume the processes $\{W^{1}_t\}_{t\geq0}$ and $\{W^{2}_t\}_{t\geq0}$ are independent.
Since our primary focus is on almost periodic nonautonomous
multi-scale SPDEs, we assume all coefficients are well-defined for all $t\in\RR$.
Moreover, assume that
the coefficients
$$
A: \RR\times V_1\to V^{*}_1,\quad F:\RR\times H_1\times H_2\to H_1,
\quad G_1: \RR\times H_1\to L_{2}(U_1; H_1),
$$
and
$$
B: \RR\times H_1\times V_2\to V^{*}_2, \quad
G_2:\RR\times H_1\times H_2\to L_{2}(U_2; H_2)
$$
are measurable.

We recall the nonautonomous multi-scale SPDEs \eqref{main equation}.
For the coefficients of the slow equation in \eref{main equation}, we suppose that there exist constants $\alpha\in(1,\infty)$, $\beta\in [0, \infty)$, $\theta\in(0,\infty)$ and $C>0$ such that the following
conditions hold for all $u,v,w\in V_1$, $u_1,v_1\in H_1$, $u_2,v_2\in H_2$ and $t\in\RR$:

\begin{conditionA}(Hemicontinuity)\label{A1}
The map $\lambda\mapsto{_{V^{*}_1}}\langle  A(t,u+\lambda v), w\rangle_{V_1}$ is continuous on $\RR$.
\end{conditionA}

\begin{conditionA} (Local monotonicity) \label{A2}
\begin{align}
_{V^{*}_1}\langle A(t,u)-A(t,v), u-v\rangle_{V_1}\leq \left[\rho(v)+\eta(u)\right]\|u-v\|^2_{H_1},\label{A21}
\end{align}
where $\rho,\eta: V_1\to [0,\infty)$ are two measurable functions satisfying
\begin{eqnarray*}
\rho(v)+\eta(v)\leq C(1+\|v\|^{\alpha}_{V_1})(1+\|v\|^{\beta}_{H_1}).
\end{eqnarray*}
\end{conditionA}

\begin{conditionA} (Coercivity)\label{A3}
\begin{align*}
_{V^{*}_1}\langle A(t,v), v\rangle_{V_1}\leq C\|v\|^2_{H_1}-\theta\|v\|^{\alpha}_{V_1}+C.
\end{align*}
\end{conditionA}

\begin{conditionA}(Growth)\label{A4}
\begin{eqnarray*}
\|A(t,v)\|^{\frac{\alpha}{\alpha-1}}_{V^{*}_1}\leq C(1+\|v\|^{\alpha}_{V_1})(1+\|v\|^{\beta}_{H_1}).
\end{eqnarray*}
\end{conditionA}

\begin{conditionA}(Lipschitz)\label{A5}

\begin{align}\begin{split}\label{0525:01}
&\|F(t,u_1,u_2)-F(t,v_1,v_2)\|_{H_1}\leq C(\|u_1-v_1\|_{H_1}+\|u_2-v_2\|_{H_2}),\\
&\|F(t,u_1,u_2)\|_{H_1}\leq C(1+\|u_1\|_{H_1}+\|u_2\|_{H_2}),\\
&\|G_1(t,u_1)-G_1(t,v_1)\|_{L_2(U_1,H_1)}\leq C\|u_1-v_1\|_{H_1},\\
&\|G_1(t,u_1)\|_{L_2(U_1,H_1)}\leq C(1+\|u_1\|_{H_1}).
\end{split}
\end{align}
\end{conditionA}


For the coefficients of the fast equation in \eref{main equation}, we suppose that there exist constants
$\kappa\in (1,\infty)$, $\gamma,\eta\in (0, \infty)$, $\zeta\in(0,1)$
and $C>0$ such that the following
conditions hold for all $u,v,w\in V_2$, $u_1,v_1\in H_1$,
$u_2\in H_2$ and $t\in\mathbb{R}$:
\begin{conditionB}(Hemicontinuity)\label{B1}
The map $\lambda\mapsto{_{V_{2}^*}}\langle B(t,u_1+\lambda v_1,u+\lambda v), w\rangle_{V_2}$ is continuous on $\RR$.
\end{conditionB}

\begin{conditionB} (Strong monotonicity) \label{B2}
\begin{align}
&
2_{V^{*}_2}\langle B(t,u_1,u)-B(t,v_1,v), u-v\rangle_{V_2}+\|G_2(t,u_1,u)-G_2(t,v_1,v)\|^2_{L_2(U_2,H_2)}\nonumber\\
\leq&
-\gamma\|u-v\|^2_{H_2}
+C\|u_1-v_1\|^2_{H_1}.\label{dc}
\end{align}
\end{conditionB}

\begin{conditionB} (Coercivity)\label{B3}
\begin{align*}
_{V^{*}_2}\langle B(t,u_1,v), v\rangle_{V_2}\leq C\|v\|^2_{H_2}-\eta\|v\|^{\kappa}_{V_2}
+C(1+\|u_1\|^2_{H_1}).
\end{align*}
\end{conditionB}

\begin{conditionB}(Growth)\label{B4}
\begin{eqnarray*}
\|B(t,u_1,v)\|_{V^{*}_2}\leq C\left(1+\|v\|^{\kappa-1}_{V_2}+\|u_1\|^{\frac{2(\kappa-1)}{\kappa}}_{H_1}\right)
\end{eqnarray*}
and
\begin{eqnarray}
 \|G_2(t,u_1,u_2)\|_{L_2(U_2,H_2)}\leq C(1+\|u_1\|_{H_1}+\|u_2\|^{\zeta}_{H_2}).\label{G2}
\end{eqnarray}
\end{conditionB}


\begin{remark} We here give some comments for the assumptions above.

(i) Condition \eref{A21} is a fully local monotonicity condition, which is weaker than the usual local monotonicity condition used in \cite{LR2015}. The major difference is that in condition \eref{A21} both $\rho$ and $\eta$ can be nonzero, thus it will cover more examples (see \cite[Section 4]{RSZ2024}).

(ii) Condition \eref{dc} is also called the dissipativity condition,
which guarantees that evolution system of measure of the nonautonomous
frozen equation is unique and its exponentially ergodicity holds.

(iii) The assumption $\zeta\in(0,1)$ in condition \eref{G2} is used to ensure the solution $(X^{\vare}_t, Y^{\vare}_t)$ has finite $p$-th moments for any $p>0$.

\end{remark}

\medskip
Now, we recall the following definition of a variational solution in \cite{LR2015,RSZ2024}.

\begin{definition}\label{S.S.}
For any given $\vare>0$, an $H_1\times H_2$-valued continuous and $\mathscr{F}_{t}$-adapted
process $(X^{\vare}_t, Y^{\vare}_t)_{t\in [0, T]}$ is called a solution of
system \eqref{main equation}, if for its $dt\otimes \PP$-equivalence class
$(\check{X}^{\vare}, \check{Y}^{\vare})$ we have
$\check{X}^{\vare}\in L^{\alpha}([0, T]\times \Omega, dt\otimes \PP; V_1)\cap L^2([0, T]\times\Omega, dt\otimes \PP; H_1)$
with $\alpha$ as in \ref{A3}, $\check{Y}^{\vare}\in L^{\kappa}([0, T]\times \Omega, dt\otimes \PP; V_2)\cap L^2([0, T]\times\Omega, dt\otimes \PP; H_2)$
with $\kappa$ as in \ref{B3} and $\PP$-a.s.
\begin{equation}\label{mild solution}
\begin{cases}
X^{\varepsilon}_t=X^{\varepsilon}_0+\int^t_0 A(s/\varepsilon,\tilde{X}^{\varepsilon}_s)ds+\int^t_0 F(s/\varepsilon,X^{\varepsilon}_s, Y^{\varepsilon}_s)ds+\int^t_0 G_1(s/\varepsilon,X^{\varepsilon}_s)dW^{1}_s,\\
Y^{\varepsilon}_t=Y^{\varepsilon}_0+\frac{1}{\varepsilon}\int^t_0 B(s/\varepsilon,X^{\varepsilon}_s, \tilde{Y}^{\varepsilon}_s)ds
+\frac{1}{\sqrt{\varepsilon}}\int^t_0 G_2(s/\varepsilon,X^{\varepsilon}_s, Y^{\varepsilon}_s)dW^{2}_s,
\end{cases}
\end{equation}
where $(\tilde{X}^{\varepsilon}, \tilde{Y}^{\varepsilon})$ is any $V_1\times V_2$-valued progressively measurable $dt\otimes \PP$-version of $(\check{X}^{\vare},\check{Y}^{\vare})$.
\end{definition}

Using the similar argument in the proofs of \cite[Theorem 2.6]{RSZ2024} and \cite[Theorem 2.3]{LRSX2023}, we can easily obtain the following well-posedness result. The detailed proof is omitted.

\begin{lemma}\label{Th1}
Assume the conditions \ref{A1}-\ref{A5}, \ref{B1}-\ref{B4} hold. Then for any $\vare>0$ and initial values $(x, y)\in H_1\times H_2$, the system \eqref{main equation} has a unique solution $\{(X^{\varepsilon}_t,Y^{\varepsilon}_t)\}_{t\geq0}$.
\end{lemma}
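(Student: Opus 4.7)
The plan is to recast the coupled system \eqref{main equation} as a single SPDE on the product Gelfand triple $\mathcal{V}:=V_1\times V_2\subset \mathcal{H}:=H_1\times H_2\subset \mathcal{V}^{*}$, and then verify that the combined drift and diffusion fit into the fully local monotone variational framework developed in \cite{RSZ2024} (as used in \cite{LRSX2023} for the autonomous multi-scale case). Concretely, for fixed $\varepsilon>0$ I would define
\[
\mathcal{A}_{\varepsilon}(t,(u,v)):=\Bigl(A(t/\varepsilon,u)+F(t/\varepsilon,u,v),\ \tfrac{1}{\varepsilon}B(t/\varepsilon,u,v)\Bigr),
\]
\[
\mathcal{G}_{\varepsilon}(t,(u,v)):=\mathrm{diag}\!\left(G_1(t/\varepsilon,u),\ \tfrac{1}{\sqrt{\varepsilon}}G_2(t/\varepsilon,u,v)\right),
\]
acting on the cylindrical Wiener process $(W^{1},W^{2})$ on $U_1\times U_2$. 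The task is then to check the five structural assumptions (hemicontinuity, local monotonicity, coercivity, growth, and a diffusion bound) of the fully local monotone variational well-posedness theorem for $(\mathcal{A}_{\varepsilon},\mathcal{G}_{\varepsilon})$; once these are in place, existence follows from a Galerkin approximation together with the generalized Minty--Browder identification argument of \cite{RSZ2024}, and uniqueness follows by applying It\^o's formula to $\|X^{\varepsilon}-\tilde X^{\varepsilon}\|^2_{H_1}+\|Y^{\varepsilon}-\tilde Y^{\varepsilon}\|^2_{H_2}$ between any two solutions and using the stochastic Gronwall lemma.

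The key verification is the combined (fully) local monotonicity. Expanding
\[
2\,{}_{\mathcal{V}^{*}}\!\langle \mathcal{A}_{\varepsilon}(t,(u_1,u_2))-\mathcal{A}_{\varepsilon}(t,(v_1,v_2)),(u_1-v_1,u_2-v_2)\rangle_{\mathcal{V}}+\|\mathcal{G}_{\varepsilon}(t,\cdot)-\mathcal{G}_{\varepsilon}(t,\cdot)\|^2,
\]
I would bound the $A$-contribution by \ref{A2}, the $F$-contribution and $G_1$-contribution by the Lipschitz estimates in \ref{A5}, and the $B$-plus-$G_2$-contribution by the strong monotonicity \eqref{dc}. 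The resulting cross term $C\|u_1-v_1\|_{H_1}\|u_2-v_2\|_{H_2}$ is absorbed into the dissipative term $-\tfrac{\gamma}{\varepsilon}\|u_2-v_2\|^2_{H_2}$ via Young's inequality, at the price of adding an $\varepsilon$-dependent multiple of $\|u_1-v_1\|^2_{H_1}$ to the slow component's bound. This yields the desired structure $[\tilde\rho(v_1,v_2)+\tilde\eta(u_1,u_2)]\|(u_1-v_1,u_2-v_2)\|^2_{\mathcal{H}}-\tfrac{\gamma}{2\varepsilon}\|u_2-v_2\|^2_{H_2}$ with $\tilde\rho+\tilde\eta$ dominated by $C_\varepsilon(1+\|\cdot\|^{\alpha}_{V_1})(1+\|\cdot\|^{\beta}_{H_1})$, exactly the fully local monotone form admissible in \cite[Theorem 2.6]{RSZ2024}.

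The remaining three checks are more routine. Coercivity of $\mathcal{A}_{\varepsilon}$ with coercive exponents $(\alpha,\kappa)$ follows from \ref{A3}, \ref{B3}, and the linear growth of $F$ in \ref{A5}, with the cross term $C\|v_1\|_{H_1}\|v_2\|_{H_2}$ controlled by Young's inequality against $\eta\|v_2\|^\kappa_{V_2}/\varepsilon$ since $\kappa>1$. The growth assumption is inherited directly from \ref{A4}, \ref{B4}, \ref{A5}, and \eqref{G2} by separately estimating the four components. Hemicontinuity of $\mathcal{A}_{\varepsilon}$ follows componentwise from \ref{A1}, \ref{B1} together with the continuity of $F$. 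I expect the main obstacle to be precisely the cross coupling between the slow and fast components, since the $F$-term mixes $V_1$-regularity of the slow variable with $H_2$-regularity of the fast variable while $B$ mixes the two in the opposite direction; this is what forces both $\rho$ and $\eta$ in the local monotonicity bound to be nontrivial, and is exactly the reason the fully local monotone framework of \cite{RSZ2024}, rather than the classical one of \cite{LR2015}, is needed. Once the monotonicity, coercivity, and growth constants are tracked (they deteriorate polynomially in $1/\varepsilon$ but are finite for each fixed $\varepsilon>0$), the variational machinery directly delivers the unique $H_1\times H_2$-continuous solution, and the integrability statements in Definition~\ref{S.S.} follow from the coercivity bound and a standard energy estimate.
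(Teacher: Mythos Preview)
Your proposal is correct and matches the paper's approach: the paper omits the proof entirely and simply refers to \cite[Theorem 2.6]{RSZ2024} and \cite[Theorem 2.3]{LRSX2023}, which is precisely the product-triple reduction and verification of the fully local monotone conditions that you outline. One minor remark: the need for the \emph{fully} local monotone framework (both $\rho$ and $\eta$ nontrivial) stems from condition \ref{A2} on $A$ itself rather than from the cross coupling, since the $F$- and $B$-contributions to the monotonicity bound are merely Lipschitz and add only constants; but this does not affect the validity of your argument.
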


%
%

\vspace{0.2cm}
Now, we present our first main result:
\begin{theorem}\label{main result 1}
Assume the conditions \ref{A1}-\ref{A5} and \ref{B1}-\ref{B4} hold. Then for any initial values $(x, y)\in H_1\times H_2$, $p\geq1$ and $T>0$, we have
\begin{align}
\lim_{\vare\rightarrow 0}\mathbb{E} \left(\sup_{t\in[0,T]}\|X_{t}^{\vare}-\bar{X}^{\varepsilon}_{t}\|^{2p}_{H_1} \right)=0,\label{2.2}
\end{align}
where $\{X_t^\varepsilon\}_{t\ge0}$ and $\{\bar X^{\vare}_t\}_{t\ge0}$ are the variational solutions to the SPDEs  \eqref{main equation} and \eqref{AVE1} respectively.
\end{theorem}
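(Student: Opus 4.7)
The plan is to adapt the classical Khasminskii time-discretization scheme to the fully locally monotone, time-inhomogeneous variational setting. First I would establish the uniform-in-$\varepsilon$ a priori estimates on $(X^\varepsilon,Y^\varepsilon)$ and $\bar X^\varepsilon$ in the natural Bochner spaces (in particular $\sup_{\varepsilon}\EE\sup_{t\le T}\|X^\varepsilon_t\|_{H_1}^{2p}+\sup_{\varepsilon}\sup_{t\le T}\EE\|Y^\varepsilon_t\|_{H_2}^{2p}<\infty$ and an $L^\alpha_tV_1$ bound for $X^\varepsilon$), which are furnished by \ref{A3}, \ref{B3} and the sub-linear growth \eqref{G2} and will be used repeatedly. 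In parallel, from the strong monotonicity \ref{B2} and the evolution system of measures introduced before \eqref{e:drift}, I would record the exponential mixing of the time-inhomogeneous semigroup $\{P^x_{s,t}\}_{t\ge s}$ of the frozen equation \eqref{TDFrozenE}, uniformly in $x$ on bounded sets, which is the ergodic input that makes $\bar F$ in \eqref{e:drift} meaningful.

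Next I would introduce the auxiliary process $\hat Y^\varepsilon$ obtained from the fast equation by freezing the slow component on a mesh $t_k=k\Delta$ with a carefully chosen $\Delta=\Delta(\varepsilon)\to 0$ and $\Delta/\varepsilon\to\infty$, i.e.
\begin{equation*}
d\hat Y^{\varepsilon}_t=\tfrac{1}{\varepsilon}B(t/\varepsilon,X^{\varepsilon}_{t_k},\hat Y^{\varepsilon}_t)\,dt+\tfrac{1}{\sqrt\varepsilon}G_2(t/\varepsilon,X^{\varepsilon}_{t_k},\hat Y^{\varepsilon}_t)\,dW^{2}_t,\qquad t\in[t_k,t_{k+1}].
\end{equation*}
Using \ref{B2} and Itô's formula applied to $\|Y^\varepsilon-\hat Y^\varepsilon\|_{H_2}^2$ on each subinterval (with Grönwall), I would prove $\EE\sup_{t\le T}\|Y^\varepsilon_t-\hat Y^\varepsilon_t\|_{H_2}^{2p}\le C_{p,T}\Delta^p$, so that the difference $F(\cdot,X^\varepsilon,Y^\varepsilon)-F(\cdot,X^\varepsilon,\hat Y^\varepsilon)$ is negligible by \ref{A5}. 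Then, rescaling time by $s=(t-t_k)/\varepsilon$, the block contribution of $\hat Y^\varepsilon$ is a long-time average of the frozen equation started at $(X^\varepsilon_{t_k},Y^\varepsilon_{t_k})$; combining the exponential ergodicity of $\{P^x_{s,t}\}$ with the continuous dependence of $\mu^x_t$ on $x$ (on bounded balls) gives the key Khasminskii-type bound
\begin{equation*}
\EE\Bigl\|\int_{t_k}^{t_{k+1}}\bigl[F(s/\varepsilon,X^{\varepsilon}_{t_k},\hat Y^{\varepsilon}_s)-\bar F(s/\varepsilon,X^{\varepsilon}_{t_k})\bigr]ds\Bigr\|_{H_1}\to 0
\end{equation*}
uniformly in $k$ as $\varepsilon/\Delta\to 0$, on the event that $X^\varepsilon$ stays in a ball of radius $R$.

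With these fluctuation estimates in hand, the last step is the variational comparison of $X^\varepsilon$ and $\bar X^\varepsilon$. Applying Itô's formula to $\|X^\varepsilon_t-\bar X^\varepsilon_t\|_{H_1}^{2p}$, the drift difference of the $A$-terms is controlled through the fully local monotonicity \ref{A2} in the form $[\rho(\bar X^\varepsilon)+\eta(X^\varepsilon)]\|X^\varepsilon-\bar X^\varepsilon\|_{H_1}^2$; to tame the non-integrable weight I would introduce the stopping times
\begin{equation*}
\tau_R^\varepsilon:=\inf\{t\ge 0:\ \|X^{\varepsilon}_t\|_{H_1}+\|\bar X^{\varepsilon}_t\|_{H_1}+\int_0^t(1+\|X^{\varepsilon}_s\|_{V_1}^{\alpha})(1+\|X^{\varepsilon}_s\|_{H_1}^{\beta})\,ds>R\},
\end{equation*}
and argue before $\tau_R^\varepsilon$, using the a priori bounds to show $\PP(\tau_R^\varepsilon<T)\to 0$ as $R\to\infty$ uniformly in $\varepsilon$. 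The diffusion difference $G_1(\cdot,X^\varepsilon)-G_1(\cdot,\bar X^\varepsilon)$ is absorbed by BDG plus \ref{A5}, while the drift difference from $F$ versus $\bar F$ is split as $F(\cdot,X^\varepsilon,Y^\varepsilon)-F(\cdot,X^\varepsilon,\hat Y^\varepsilon)$, $F(\cdot,X^\varepsilon,\hat Y^\varepsilon)-\bar F(\cdot,X^\varepsilon)$, $\bar F(\cdot,X^\varepsilon)-\bar F(\cdot,\bar X^\varepsilon)$; the first is handled by the auxiliary estimate, the second by the Khasminskii bound above, and the third by the Lipschitz property of $\bar F$ (inherited from \ref{A5} via the definition \eqref{e:drift}). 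Grönwall then yields \eqref{2.2} on $\{t<\tau_R^\varepsilon\}$, and letting first $\varepsilon\to 0$ and then $R\to\infty$ completes the proof.

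I expect the main obstacle to be the combination of the fully local monotonicity in \ref{A2} (with both $\rho$ and $\eta$ possibly nonzero, so the Grönwall weight is not a priori in $L^1$) with the stopping-time argument, since one must simultaneously control the $V_1$-norm of $X^\varepsilon$ and pass to the limit $\varepsilon\to 0$ for each fixed truncation level $R$; obtaining the rate-free Khasminskii bound with constants that are locally uniform in $x$ (rather than needing global ergodicity) is what makes the truncation compatible with the averaging step.
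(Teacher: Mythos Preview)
Your strategy is essentially the paper's: Khasminskii discretization with an auxiliary fast process $\hat Y^\varepsilon$, exponential ergodicity of the frozen semigroup $\{P^x_{s,t}\}$ from \ref{B2}, a stopping-time truncation to neutralize the weight $\rho+\eta$ in \ref{A2}, and then Grönwall on $\|X^\varepsilon-\bar X^\varepsilon\|^2$ followed by $\varepsilon\to0$, $R\to\infty$. Two points deserve correction.

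First, the claim $\EE\sup_{t\le T}\|Y^\varepsilon_t-\hat Y^\varepsilon_t\|_{H_2}^{2p}\le C_{p,T}\Delta^p$ is almost certainly not provable and is stronger than what you need. Applying It\^o to $\|Y^\varepsilon-\hat Y^\varepsilon\|_{H_2}^2$ and taking the supremum inside the expectation forces you through BDG on a martingale whose quadratic variation carries a factor $\varepsilon^{-1}$; since the scheme requires $\Delta/\varepsilon\to\infty$, this term does not close. The paper (Lemma~\ref{DEY}) proves only the integrated bound $\EE\int_0^T\|Y^\varepsilon_t-\hat Y^\varepsilon_t\|_{H_2}^2\,dt\le C_T\delta^{1/2}$, and likewise only $\sup_t\EE\|\hat Y^\varepsilon_t\|_{H_2}^2$ rather than $\EE\sup_t$. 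This is sufficient: in the It\^o expansion of $\|X^\varepsilon-\bar X^\varepsilon\|_{H_1}^2$ the term $F(\cdot,X^\varepsilon,Y^\varepsilon)-F(\cdot,X^\varepsilon,\hat Y^\varepsilon)$ appears under a $ds$-integral, so an $L^2(dt)$ bound is all you use. You should downgrade this step accordingly.

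Second, a minor structural difference: rather than applying It\^o directly to $\|X^\varepsilon-\bar X^\varepsilon\|_{H_1}^{2p}$, the paper first proves the $p=1$ case before the stopping time with an explicit rate (Proposition~\ref{ESX}, via the correlation bound $\Psi^\varepsilon_k(s,r)\le C e^{-\gamma(s-r)/2\varepsilon}$), then combines it with $\PP(T>\tau_R^\varepsilon)\le C/\sqrt R$ to get the $L^2$ result, and finally upgrades to $L^{2p}$ by H\"older against the uniform $4p{-}2$ moments. Your direct-$2p$ route would also work but is heavier; the two-step version keeps the Grönwall weight linear in $\rho+\eta$ and avoids cross-terms.
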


\vspace{0.1cm}

Note that the coefficients in the averaged equation \eref{AVE1} maintain the dependence on
the scaling parameter $\varepsilon$. To achieve further effective system, we proceed to
perform an additional averaging of \eqref{AVE1} with respect to time $t$.
In the subsequent analysis, we will focus on cases where the functions
$F$, $B$ and $G_2$ have some recurrent properties, while $A$ and $G_1$
exhibit certain asymptotically autonomous behavior.
As preparation for this analysis, we first recall the definition of almost periodic functions.
\begin{definition}Let $(\mathcal{X},d)$ be a complete metric space.
We call $\phi:\RR\rightarrow \mathcal{X}$ is a {\it almost periodic function}, if for any $\epsilon>0$, there exists a constant
$l=l(\epsilon)>0$ such that $\mathcal T(\varphi,\epsilon)\cap[a,a+l]\neq\varnothing$ for all $a\in\RR$,
where
$$
\mathcal T(\varphi,\epsilon):=\left\{\tau\in\RR:\sup_{t\in\RR}
d(\varphi(t+\tau),\varphi(t))<\epsilon\right\}.
$$
\end{definition}
\begin{remark}
The notion of almost periodic functions was proposed and comprehensively studied by Bohr \cite{Bohr}.
In Celestial Mechanics, it is widely recognized that almost periodic solutions and
stable solutions are fundamentally interconnected.
Similarly, stable electronic circuits exhibit almost periodic behavior. Set $\mathcal{AP}$ be the space of all almost periodic functions $\phi:\RR\rightarrow\RR$ under the usual Euclidean distance. In the following, we give some special examples of almost periodic functions:

(1) Define $\phi_1(t):=\sin(\rho_1t)+\cos(\rho_2t)$. If $\rho_1/\rho_2$ is rational
then $\phi_1$ is periodic. Otherwise, $\phi_1$ is almost periodic and not periodic. Actually, if $\rho_1/\rho_2$ is irrational, $\phi_1$ is quasi-periodic.

(2) Define $\phi_2(t):=\sum_{n=1}^{\infty}\frac{1}{n^2}\sin(\sqrt{n}t)$, then $\phi_2\in\mathcal{AP}$.\\
Both periodic and quasi-periodic functions fall under the category of almost periodic functions.
For the sake of brevity, we will not separately introduce quasi-periodic functions in this paper.
\end{remark}

Note that the coefficients $F$, $B$ and $G_2$ still depend on the space variables, thus we need the concept of uniformly almost periodic.

\begin{definition}\rm
Let $(\mathcal{X}_1,d_1)$ and $(\mathcal{X}_2,d_2)$ be complete metric spaces.
We say
$\varphi\in C(\RR\times\mathcal{X}_1,\mathcal{X}_2)$ is {\it uniformly almost periodic} in $t$
if $\varphi$ is bounded on every bounded subset from $\RR\times\mathcal X_1$ and
for any $\epsilon>0$ and any bounded subset $Q\subset \mathcal X_1$, there exists a constant
$l=l(\epsilon,Q)>0$ such that $\mathcal T(\varphi,\epsilon,Q)\cap[a,a+l]\neq\varnothing$ for all $a\in\RR$,
where
$$
\mathcal T(\varphi,\epsilon,Q):=\left\{\tau\in\RR:\sup_{(t,x)\in\RR\times Q}
d_2(\varphi(t+\tau,x),\varphi(t,x))<\epsilon\right\}.
$$
\end{definition}

\begin{remark} Suppose that $F:\RR\times\mathcal{X}_1\rightarrow\mathcal X_2$ is continuous in $t$ uniformly with respect to $x_1$
on every bounded subset $Q\subset\mathcal X_1$ and bounded on every bounded subset from $\RR\times\mathcal X_1$. We here give a sufficient and necessary condition for ensuring
$F$ is uniformly almost periodic, that is, if and only if for any $\{t_n'\}\subset \RR$ there exists a
subsequence $\{t_n\}\subset\{t_n'\}$ such that 
\begin{equation*}
\lim_{n\rightarrow\infty}F(t+t_n,x_1)=:F'(t,x_1)
\end{equation*}
exists uniformly with respect to $t\in\RR$ and $x_1\in Q$; see \cite{CL2020, CL2023}. In the following, we give several special examples:

(1) Let $(\mathcal X_1,\|\cdot\|)$ be normed vector space.
Set
\begin{equation*}
F(t,x_1):=\phi(t)f(x_1),
\end{equation*}
where $\phi\in\mathcal{AP}$, $f:\mathcal X_1\rightarrow \mathcal X_2$ is bounded on any bounded subset.
Then $F$ is uniformly almost periodic.
Indeed, for any $\{t_n'\}\subset \RR$ there exists $\{t_n\}\subset\{t_n'\}$
and $\phi'$ such that
$\lim_{n\rightarrow\infty}\sup_{t\in\RR}|\phi(t+t_n)-\phi'(t)|=0$. Then
for any bounded $Q\subset \mathcal X_1$,
\begin{align*}
\lim_{n\rightarrow\infty}\sup_{t\in\RR,x_1\in Q}
\left\|\phi(t+t_n)f(x_1)-\phi'(t)f(x_1)\right\|
&
=\lim_{n\rightarrow\infty}\sup_{t\in\RR,x_1\in Q}|\phi(t+t_n)-\phi'(t)|
\,\|f(x_1)\|=0.
\end{align*}

(2) We can also consider some examples formulated as Nemytskii operators.
Let $\Lambda\subset\RR^n$ be a  bounded subset,
and suppose $f:\RR\times\Lambda\times \RR\rightarrow\RR$ is almost periodic in $t$
uniformly with respect to $(x,\xi)$ on $\Lambda\times\RR$.
Assume further that $f$ is linear growth with respect to $\xi$.
Then the mapping $F:\RR\times L^2(\Lambda)\rightarrow L^2(\Lambda)$ defined by
$F(t,u)(x):=f(t,x,u(x))$ is continuous on $L^2(\Lambda)$ and uniformly almost periodic.
A concrete example is the mapping $F:\RR\times L^2([0,1])\rightarrow L^2([0,1]),~
F(t,u)(x):=\sin(t+x+u(x))$, which is uniformly almost periodic.
\end{remark}

Now we impose the following conditions:

\begin{conditionC}\label{C1}
Suppose that there exists $\bar{A}:V_1\rightarrow V_1^*$, which satisfies \ref{A1},
such that for all $x\in V_1$,
$$
\lim_{t\rightarrow\infty}\|A(t,x)-\bar{A}(x)\|_{V_1^*}=0,
$$
and there exists $\bar{G}_1: H_1\to L_{2}(U_1, H_1)$ such that for any $T\geq 0$, $R>0$ and $x\in H_1$ with $\|x\|_{H_1}\leq R$,
\begin{align}\label{C1eq}
\sup_{t\in \RR}\frac{1}{T}\int^{t+T}_t \|G_1(s,x)-\bar{G}_1(x)\|^2_{L_{2}(U_1, H_1)}ds\leq \phi^R_1(T),
\end{align}
where $\phi^R_1(T)\rightarrow 0$ as $T\to +\infty$.
\end{conditionC}

\begin{conditionC}\label{C2}
Suppose that there exists a closed subset $S\subset H_2$ equipped with the norm $\|\cdot\|_S$ such that
$V_2\subset S$ is continuous and $S\subset H_2$ is compact. Let $T_n$ be a sequence of positive definite
self-adjoint operators on $H_2$ such that for each $n\geq1$,
\begin{equation*}
\langle y_1,y_2\rangle_n:=\langle y_1,T_ny_2\rangle_{H_2},\quad y_1,y_2\in H_2,
\end{equation*}
defines a new inner product on $H_2$. Assume further that the norms $\|\cdot\|_n$ generated by
$\langle\cdot,\cdot\rangle_n$ are all equivalent to $\|\cdot\|_{H_2}$ and for all $y_1\in S$ we have
\begin{equation*}
\|y_1\|_n\uparrow \|y_1\|_S \quad {\text{as}} ~n\rightarrow\infty.
\end{equation*}
Furthermore, we suppose that for each $n\geq1$, $T_n: V_2\rightarrow V_2$ is continuous and there exist
constants $\gamma_1, C>0$ such that for all $x\in H_1$, $y\in V_2$ and $t\in \RR$,
\begin{equation*}
2_{V_2^*}\langle B(t,x,y),T_ny\rangle_{V_2}+\|G_2(t,x,y)\|^{2}_{L_2(U_2,H_2)}
\leq -\gamma_1\|y\|_{n}^2+C(1+\|x\|_{H_1}^2).
\end{equation*}
\end{conditionC}

\begin{conditionC}\label{C3}
Assume that $F$, $B$ and $G_2$ are uniformly almost periodic. Moreover, $B(t,x,y)=B_1(x,y)+B_2(t,x,y)$, where
$B_1:H_1\times V_2\rightarrow V_2^*$ and $B_2:\RR\times H_1\times H_2\rightarrow H_2$ satisfy
that there exists a constant $C>0$ such that
for all $t\in\RR$, $x\in H_1$ and $y\in H_2$,
$$
\|B_2(t,x,y)\|_{H_2}\leq C(1+\|x\|_{H_1}+\|y\|_{H_2}).
$$
\end{conditionC}

\begin{remark}\label{Rem0722}
Condition \ref{C1} is used to average over time in terms $A(t/\varepsilon,x)$ and $G_1(t/\varepsilon,x)$. 
It follows from \ref{C1} that $\bar{A}$ satisfies conditions \ref{A2}, \ref{A3} and \ref{A4}.
As demonstrated in Examples \ref{ExCHe} and \ref{2DLeq} below, these terms satisfy \ref{C1}.
Condition \ref{C2} is used to prove that the evolution system of measures
$\{\mu^x_t\}_{t\in\RR}$ is {\it almost periodic} for any $x\in H_1$. We point out that certain stochastic reaction-diffusion equations and stochastic porous medium equations satisfy \ref{C2}; see e.g. \cite{CL2021, CL2023} for details. While, condition \ref{C3} further ensures that $\{\mu^x_t\}_{t\in\RR}$ is {\it uniformly almost periodic}; see Lemma \ref{Le5.2}.
\end{remark}

Now, we are in a position to present our second main result.
\begin{theorem}\label{main result 2}
Assume the conditions \ref{A1}-\ref{A5}, \ref{B1}-\ref{B4} and \ref{C1}-\ref{C3} hold.
Then for any initial values $(x, y)\in H_1\times H_2$, $p\geq1$ and $T>0$, we have
\begin{align}
\lim_{\vare\rightarrow 0}\EE\left(\sup_{t\in [0, T]}\|X_{t}^{\varepsilon}-\bar{X}_{t} \|^{2p}_{H_1}\right)=0,\label{2.3}
\end{align}
where $\{X_t^\varepsilon\}_{t\ge0}$ and $\{\bar X_t\}_{t\ge0}$  are the variational solutions to the SPDEs  \eqref{main equation} and \eqref{AVE2} respectively.
\end{theorem}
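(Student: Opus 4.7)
The plan is to leverage Theorem \ref{main result 1} to reduce \eqref{2.3} to a single time-averaging problem. Writing
$$
X^{\varepsilon}_t-\bar{X}_t=(X^{\varepsilon}_t-\bar{X}^{\varepsilon}_t)+(\bar{X}^{\varepsilon}_t-\bar{X}_t),
$$
Theorem \ref{main result 1} already disposes of the first difference in $L^{2p}(\Omega;C([0,T];H_1))$. It therefore suffices to show
$$
\lim_{\varepsilon\to 0}\EE\Bigl(\sup_{t\in[0,T]}\|\bar{X}^{\varepsilon}_t-\bar{X}_t\|_{H_1}^{2p}\Bigr)=0,
$$
where $\bar{X}^{\varepsilon}$ solves \eqref{AVE1} with oscillating coefficients $A(t/\varepsilon,\cdot),\bar{F}(t/\varepsilon,\cdot),G_1(t/\varepsilon,\cdot)$ and $\bar{X}$ solves \eqref{AVE2} with the fully averaged coefficients $\bar{A},\bar{F},\bar{G}_1$. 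This is a ``pure'' time-averaging problem for a single SPDE in the fully local monotone variational framework, driven only by $W^1$; the fast component has already been eliminated.

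Setting $Z^{\varepsilon}_t:=\bar{X}^{\varepsilon}_t-\bar{X}_t$, I would apply the variational It\^o formula to $\|Z^{\varepsilon}_t\|_{H_1}^2$ and decompose each coefficient difference into a monotone part plus an averaging remainder, namely
$$
A(t/\varepsilon,\bar{X}^{\varepsilon}_t)-\bar{A}(\bar{X}_t)=\bigl[A(t/\varepsilon,\bar{X}^{\varepsilon}_t)-\bar{A}(\bar{X}^{\varepsilon}_t)\bigr]+\bigl[\bar{A}(\bar{X}^{\varepsilon}_t)-\bar{A}(\bar{X}_t)\bigr],
$$
and analogously for $\bar F$ and $G_1$. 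Because $\bar{A}$ inherits \ref{A2}--\ref{A4} from \ref{C1} (see Remark \ref{Rem0722}) and $\bar{F}(t/\varepsilon,\cdot),\bar{G}_1$ are Lipschitz by \ref{A5}, the monotone parts are absorbed into a Gr\"onwall estimate of the form
$$
\EE\|Z^{\varepsilon}_{t\wedge\tau_R}\|_{H_1}^2\leq C_R\int_0^{t\wedge\tau_R}\EE\|Z^{\varepsilon}_s\|_{H_1}^2\,ds+\EE[R^{\varepsilon}(t)]
$$
after introducing the stopping time $\tau_R:=\inf\{t\geq0:\|\bar{X}_t\|_{H_1}^{\beta}+\|\bar{X}^{\varepsilon}_t\|_{V_1}^{\alpha}\geq R\}$, where $R^{\varepsilon}(t)$ collects the three averaging remainders. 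Upgrading from this $L^2$-estimate to $L^{2p}$ with $\sup_{t\leq T}$ will be done by BDG applied to the martingale term $\int_0^{\cdot}\langle Z^{\varepsilon}_s,[G_1(s/\varepsilon,\bar{X}^{\varepsilon}_s)-\bar{G}_1(\bar{X}_s)]\,dW^1_s\rangle_{H_1}$ together with the uniform-in-$\varepsilon$ moment estimates proven in Section \ref{Sec3}, in the same spirit as \cite{LRSX2023}, and finally $R\to\infty$ is used to remove the cutoff.

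The crux is therefore to prove that each of the three averaging remainders vanishes as $\varepsilon\to 0$. For the $A$-remainder one combines the pointwise convergence $\|A(t,x)-\bar{A}(x)\|_{V_1^*}\to 0$ from \ref{C1} with the $V_1$-bound $\|\bar{X}^{\varepsilon}_s\|_{V_1}\leq R$ on $[0,\tau_R]$ and dominated convergence. For the $G_1$-remainder one estimates the It\^o-correction directly in Hilbert--Schmidt norm via \eqref{C1eq}, while the stochastic-integral contribution is controlled by BDG followed again by \eqref{C1eq}. The genuinely new ingredient is the $\bar{F}$-remainder
$$
\int_0^{t\wedge\tau_R}\bigl\langle\bar{F}(s/\varepsilon,\bar{X}^{\varepsilon}_s)-\bar{F}(\bar{X}^{\varepsilon}_s),Z^{\varepsilon}_s\bigr\rangle_{H_1}\,ds;
$$
here I would adopt a Khasminskii-type time discretisation on a grid of mesh $\delta(\varepsilon)\to 0$ with $\delta(\varepsilon)/\varepsilon\to\infty$, freeze $\bar{X}^{\varepsilon}$ on each slab, and exploit the uniform almost-periodic convergence \eqref{e:Cb} of $\bar{F}(\cdot,x)$ (Lemma \ref{FAP}, itself a consequence of \ref{C3} and Lemma \ref{Le5.2}). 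After the change of variable $u=s/\varepsilon$ the averaging window $\delta/\varepsilon$ becomes arbitrarily large, and the uniformity in $t$ of \eqref{e:Cb} is precisely what decouples the remainder from the $\varepsilon$-dependent trajectory.

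The main obstacle is the simultaneous presence of \emph{fully local} monotonicity (\eqref{A21} with both $\rho$ and $\eta$ possibly nonzero) and the time-oscillating drift $A(t/\varepsilon,\cdot)$: the monotonicity multiplier depends on both $\bar{X}^{\varepsilon}$ and $\bar{X}$, and on $\tau_R$ it is only bounded by quantities of the form $\|\bar{X}_s\|_{H_1}^{\beta}$ and $\|\bar{X}^{\varepsilon}_s\|_{V_1}^{\alpha}$, which forces a careful interplay between $\varepsilon$-uniform moment bounds and the Gr\"onwall loop. A secondary subtlety is that on each Khasminskii slab the frozen value $\bar{X}^{\varepsilon}_{k\delta}$ lies in an $\varepsilon$-dependent random subset of $H_1$; this is exactly why the almost-periodic \emph{uniformity} in $t\in\RR$ in \eqref{e:Cb}, rather than a mere fixed-$t$ limit, is indispensable, and why the structural condition \ref{C3} must do genuinely more work than the ``static'' hypotheses \ref{A2}--\ref{A5}.
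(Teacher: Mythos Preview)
Your overall strategy coincides with the paper's: reduce via Theorem \ref{main result 1} to comparing $\bar X^{\varepsilon}$ with $\bar X$, apply It\^o's formula to $\|\bar X^{\varepsilon}_t-\bar X_t\|_{H_1}^2$, introduce a stopping time to tame the fully local monotone multiplier, treat the three remainders (for $A$, $\bar F$, $G_1$) separately, use a Khasminskii mesh $\delta$ with $\delta/\varepsilon\to\infty$ together with Lemma \ref{FAP} and \eqref{AsyF} for the $\bar F$-remainder, use \eqref{C1eq} for $G_1$, and then let $R\to\infty$ and bootstrap to $L^{2p}$.

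There is, however, a concrete problem with your decomposition of the $A$-term and your choice of stopping time. You split
$A(t/\varepsilon,\bar X^{\varepsilon}_t)-\bar A(\bar X_t)=[A(t/\varepsilon,\bar X^{\varepsilon}_t)-\bar A(\bar X^{\varepsilon}_t)]+[\bar A(\bar X^{\varepsilon}_t)-\bar A(\bar X_t)]$
and claim the first bracket vanishes by dominated convergence plus a pointwise $V_1$-bound from $\tau_R$. This fails on two counts. First, \ref{C1} gives $\|A(t,x)-\bar A(x)\|_{V_1^*}\to 0$ only for \emph{fixed} $x\in V_1$; since your argument $\bar X^{\varepsilon}_s$ moves with $\varepsilon$, pointwise convergence in $x$ is not enough, and no uniformity on $V_1$-balls is assumed. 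Second, your $\tau_R$ is defined through a pointwise bound $\|\bar X^{\varepsilon}_t\|_{V_1}^{\alpha}\leq R$, but in the variational framework the solution has no $V_1$-continuous version, so this stopping time is not well defined. The paper avoids both issues by reversing the split: it writes
$A(s/\varepsilon,\tilde{\bar X}^{\varepsilon}_s)-\bar A(\tilde{\bar X}_s)=[A(s/\varepsilon,\tilde{\bar X}^{\varepsilon}_s)-A(s/\varepsilon,\tilde{\bar X}_s)]+[A(s/\varepsilon,\tilde{\bar X}_s)-\bar A(\tilde{\bar X}_s)]$,
uses \ref{A2} (which is uniform in $t$) on the first bracket, and puts the remainder at the $\varepsilon$-independent point $\tilde{\bar X}_s$, so that dominated convergence is legitimate. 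Correspondingly the paper's stopping time $\bar\tau^{\varepsilon}_R$ is defined through the \emph{integral} $\int_0^t(1+\|\tilde{\bar X}^{\varepsilon}_s\|_{V_1}^{\alpha})(1+\|\bar X^{\varepsilon}_s\|_{H_1}^{\beta})\,ds$ and the $H_1$-norms, which are continuous, and this is exactly what bounds the Gr\"onwall multiplier $\rho(\tilde{\bar X}^{\varepsilon}_s)+\eta(\tilde{\bar X}_s)$. With these two swaps your plan goes through; the rest of your outline (the $\bar F$- and $G_1$-remainders, the Khasminskii discretisation, and the $R\to\infty$/$L^{2p}$ closing) matches the paper.
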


\subsection{Application to examples}\label{Sec2.2}
In this subsection, we will illustrate our theoretical results by some examples.
We point out that all the examples considered in \cite{LRSX2023} can be covered by our framework,
including the stochastic porous media equation, $p$-Laplace equation, Burgers equation
and 2D Navier-Stokes equation.
For brevity, we just introduce two examples which did not be covered in the existing works,
and  consider the linear multiplicative noise in these examples.

Let $D$ be bounded domain in $\RR^d$ with smooth boundary $\partial D$.
We will use $L^p(D)$ to denote the space of $p$-Lebesgue integrable functions on $D$.
Denote by $H^p(D)$ ($H^p_0(D)$) the usual Sobolev space on $D$ (with zero trace).
Note that we mainly focus on the nonlinear operator $A$, so
assume that $F:\RR\times H_1\times H_2\rightarrow H_1$ satisfies \ref{A5}.
Let $\Xi$ be the set of all mapping $\ell:\RR\rightarrow\RR_+$ satisfing $\lim_{t\rightarrow+\infty}\ell(t)=\bar{\ell}$
with $\bar{\ell}>0$, For example, $\ell(t):=\frac{1}{1+|t|^\iota}+1$ with $\iota>0$.

\begin{example}[Stochastic Cahn-Hilliard equation]\label{ExCHe}
Consider the following slow-fast stochastic Cahn-Hilliard-heat equation
\begin{equation}\label{0312:01}
\begin{cases}
du^\varepsilon_t=\left(-\ell_1(t/\varepsilon)\Delta^2u_t^\varepsilon+F(t/\varepsilon,u_t^\varepsilon,v_t^\varepsilon)\right)dt
+\ell_2(t/\vare)u^\varepsilon_tdW^1_t,\quad u_0^\vare=u\in L^2(D),
\\
dv^\varepsilon_t=\frac{1}{\varepsilon}\left(\Delta v_t^\varepsilon
+\phi(t/\varepsilon)v_t^\varepsilon+u_t^\varepsilon\right)dt
+\frac{1}{\sqrt{\varepsilon}}cv_t^\vare dW_t^2,\quad v_0^\vare=v\in L^2(D),
\end{cases}
\end{equation}
where $\ell_1,\ell_2\in\Xi$, $c\in\RR$ and $|\phi|_\infty+\tfrac{c^2}{2}<\lambda_*$.
Here $\lambda_*$ is the first eigenvalue of $-\Delta$ on
$L^2(D)$ with Dirichlet boundary condition.
Define
$$
V_1=\left\{u\in H^2(D):\nabla u\cdot \nu=\nabla (\Delta u)\cdot\nu=0 {\text{ on }} \partial D  \right\},
\quad H_1=L^2\left(D\right)
$$
and
$$
V_2:= H_0^{1}\left(D\right), \quad H_2:=L^2\left(D\right).
$$
Note that conditions \ref{B1}-\ref{B4} hold; see \cite[Section 6.1]{CL2023} for details.
And it follows from Theorems 3.6 and 3.9 in
\cite{CL2023} that for any $u\in L^2(D)$,
there exists a unique evolution system of measures $\{\mu_t^u\}_{t\in\RR}$ to
\begin{equation*}
dv_t^u=\left(\Delta v_t^u+\phi(t)v_t^u+u\right)dt+cv_t^ud\bar{W}_t^2.
\end{equation*}
Let $A(t,u):=-\ell_1(t)\Delta^2u$.
By \cite[Example4.4]{RSZ2024}, one sees that conditions \ref{A1}-\ref{A4} hold.
Then in view of Theorem \ref{main result 1}, we have for any $T>0$,
\begin{equation*}
\lim_{\vare\rightarrow0}\EE\left(\sup_{t\in[0,T]}\|u_t^\vare-\bar{u}_t^\vare\|_{L^2(D)}^2\right)=0,
\end{equation*}
where $\bar{u}_t^\vare$ is the solution for
\begin{equation*}
d\bar{u}_t^\vare=\left(-\ell_1(t/\vare) \Delta^2\bar{u}_t^\vare+\bar{F}(t/\vare,\bar{u}_t^\vare)\right)dt
+\ell_2(t/\vare)\bar{u}_t^\vare dW_t^1,\quad
\bar{u}_0^\vare=u\in L^2(D).
\end{equation*}
Here $\bar{F}(t,u):=\int_{H_2}F(t,u,v)\mu_t^u(dv)$.
Furthermore, assume that $F$ is uniformly almost periodic and $\phi\in\mathcal{AP}$.
Since $\ell\in \Xi$, \ref{C1} holds.
It is clear that  condition \ref{C3} holds. It follows from \cite[Section 6.1]{CL2023} that \ref{C2} holds.
Then by Theorem \ref{main result 2}, we have for any $T>0$,
\begin{equation*}
\lim_{\varepsilon\rightarrow0}\EE\left(\sup_{t\in[0,T]}\|u_t^\varepsilon-\bar{u}_t\|_{L^2(D)}^2\right)=0,
\end{equation*}
where $\bar{u}_t$ is the solution to
\begin{equation*}
d\bar{u}_t=\left(-\bar{\ell}_1 \Delta^2\bar{u}_t+\bar{F}(\bar{u}_t)\right)dt
+\bar{\ell}_2\bar{u}_tdW_t^1,\quad
\bar{u}_0=u\in L^2(D).
\end{equation*}
Here $\bar{F}(u):=\lim_{T\rightarrow\infty}\frac{1}{T}\int_0^T\int_{H_2}F(t,u,v)\mu_t^u(dv)$dt.
\end{example}

\begin{example}[Stochastic 2D liquid crystal model]\label{2DLeq}
Set $D\subset \RR^2$ and define
$$
\widetilde{V}:=\left\{u\in H^1(D)^2: \nabla\cdot u=0, u|_{\partial D}=0\right\}.
$$
Let $\widetilde{H}$ be the closed of $\widetilde{V}$ under the $L^2$-norm $\|u\|_{\widetilde{H}}^2:=\int_{D}|u(x)|^2dx$
and
$$
P_H:L^2(D)^2\rightarrow \widetilde{H}
$$
be the usual Helmholtz-Leray projection. Now we set $x:=(u,n)$,
$$
V_1:=\widetilde{V}\times \left\{n\in H^2(D)^3:\frac{\partial n}{\partial\nu}=0\right\},\quad
H_1:=\widetilde{H}\times [H^1(D)^3]
$$
with the norms denoted by
$$
\|x\|_{V_1}^2:=\|u\|_{\widetilde{V}}^2+\|n\|_{H_0^2}^2, \quad
\|x\|_{H_1}^2:=\|u\|_{\widetilde{H}}^2+\|n\|_{H^1}^2,
$$
and let
\begin{equation*}
A(x):=
\begin{pmatrix}
P_H\left[\Delta u-(u\cdot\nabla)u-\nabla n\cdot \Delta n\right]\\
\Delta n-(u\cdot\nabla)n-\Phi(n)
\end{pmatrix}
,
\end{equation*}
where $\Phi(n)=\left(\sum_{i=0}^{k}a_i|n|^{2i}\right)n$ with some $k\in\NN$. Here $a_i\in\RR$
for $i=0,1,...,k-1$ and $a_k>0$. Set
$$
V_2:=L^p(D) \quad \text{and} \quad H_2:=H^{-1}_0(D),
$$
where $H_0^{-1}(D)$ is the dual space of $H_0^1(D)$.

Consider the following slow-fast stochastic liquid-crystal-porous-media equation
\begin{equation}\label{0317:01}
\begin{cases}
dX_t^\varepsilon=\left(\ell_1(t/\varepsilon)A(X_t^\varepsilon)+F(t/\varepsilon,X_t^\varepsilon,Y_t^\varepsilon)\right)dt
+\ell_2(t/\vare)X_t^\varepsilon dW_t^1,\quad X_0^\vare=x\in H_1,\\
dY_t^\varepsilon=\frac{1}{\varepsilon}\left[\Delta\left(|Y_t^\varepsilon|^{p-2}Y_t^\varepsilon
+aY_t^\varepsilon\right)+\phi(t/\varepsilon)Y_t^\varepsilon\right]dt
+\frac{1}{\sqrt{\varepsilon}}G_2 dW_t^2, \quad Y_0^\vare=y\in H_2,
\end{cases}
\end{equation}
where $a>0$, $\ell_1,\ell_2\in\Xi$, $p>2$, $\phi:\RR\rightarrow(-\infty,0)$ and $G_2\in L_2(L^p(D))$.
It follows from \cite[Theorem 6.3]{CL2023} that
conditions \ref{B1}-\ref{B4} hold, which implies that
there exists a unique evolution system of measures $\{\mu_t\}_{t\in\RR}$ to
\begin{equation*}
dY_t=\left[\Delta\left(|Y_t|^{p-2}Y_t
+aY_t\right)+\phi(t)Y_t\right]dt+G_2d\bar{W}_t^2.
\end{equation*}
Define
\begin{equation*}
\bar{F}(t,x):=\int_{H_2}F(t,x,y)\mu_t(dy).
\end{equation*}
It can be verified that $A$ satisfies conditions \ref{A1}-\ref{A4} (see \cite[Example 4.5]{RSZ2024}). Then with the help of Theorem \ref{main result 1}, we obtain for any $T>0$,
\begin{equation*}
\lim_{\vare\rightarrow0}\EE\left(\sup_{t\in[0,T]}\|X_t^\vare-\bar{X}_t^\vare\|_{H_1}^2\right)=0,
\end{equation*}
where $\bar{X}_t^\vare$ is the solution to
\begin{equation*}
dX_t^\varepsilon=\left(\ell_1(t/\varepsilon)A(X_t^\varepsilon)+\bar{F}(t/\varepsilon,X_t^\varepsilon)\right)dt
+\ell_2(t/\vare)X_t^\varepsilon dW_t^1,\quad X_0^\vare=x\in H_1.
\end{equation*}

Furthermore, assume that $F$ is uniformly almost periodic and $\phi\in\mathcal{AP}$.
It can be verified that conditions \ref{C1} and \ref{C3} hold.
By the proof of \cite[Theorem 6.3]{CL2023}, one sees that \ref{C2} holds.
Then employing Theorem \ref{main result 2},
we have for any $T>0$,
\begin{equation*}
\lim_{\vare\rightarrow0}\EE\left(\sup_{t\in[0,T]}\|X_t^\varepsilon-\bar{X}_t\|_{H_1}^2\right)=0,
\end{equation*}
where $\bar{X}_t$ is the solution to
$$
d\bar{X}_t=\left(\bar{\ell}_1 A(X_t^\varepsilon)+\bar{F}(\bar{X}_t)\right)dt
+\bar{\ell}_1\bar{X}_t dW_t^1.
$$
Here,
\begin{equation*}
\bar{F}(x):=\lim_{T\rightarrow\infty}\frac{1}{T}\int_0^T\int_{H_2}F(t,x,y)\mu_t(dy)dt.
\end{equation*}

\end{example}

\section{Apriori estimates and evolution system of measures} \label{Sec3}

In this section, we first establish a priori estimates for the solution process
$\{(X^{\varepsilon}_t,Y^{\varepsilon}_t)\}_{t\geq0}$.
These estimates lead to crucial bounds on the temporal increments of
$\{X_{t}^{\varepsilon}\}_{t\geq0}$, which are fundamental for proving our main results.
Subsequently, we employ a time discretization technique to construct an auxiliary process
$\{\hat{Y}_{t}^{\varepsilon}\}_{t\geq0}$ and analyze the difference process
$\{Y^{\varepsilon}_t-\hat{Y}_{t}^{\varepsilon}\}_{t\geq0}$. Finally, we investigate
the evolution system associated with the corresponding non-autonomous frozen equation.
For clarity and consistency, we maintain conditions \ref{A1}-\ref{A5} and
\ref{B1}-\ref{B4} throughout this section, and fix an initial value
$(x,y)\in H_1\times H_2$.

\subsection{Apriori estimates}

Note that all the conditions \ref{A1}-\ref{A5} and
\ref{B1}-\ref{B4} hold uniformly for $t\in\RR$. Therefore, the proofs of the following Lemmas \ref{PMY}, \ref{COX} and \ref{DEY} are very similar to the arguments presented in \cite{LRSX2023}, and we will omit the detailed proofs for the sake of simplicity.

\begin{lemma} \label{PMY}
For any  $T>0$ and $p\geq1$, there exists a constant $C_{p,T}>0$ such that
\begin{align}
\sup_{\vare\in (0,1)}\mathbb{E}\left(\sup_{t\in[0,T]}\|X_{t}^{\vare}\|^{2p}_{H_1}\right)
+\sup_{\vare\in (0,1)}\EE\left(\int^T_0\|X_{t}^{\vare}\|^{2p-2}_{H_1}
\|\tilde{X}_{t}^{\vare}\|^{\alpha}_{V_1}dt\right)
\leq C_{p,T}\left(1+\|x\|^{2p}_{H_1}+\|y\|^{2p}_{H_2}\right)\label{F3.1}
\end{align}
and
\begin{align}
\sup_{\vare\in (0,1)}\sup_{t\in[0, T]}\mathbb{E}\|Y_{t}^{\varepsilon}\|^{2p}_{H_2}\leq C_{p,T}\left(1+\|x\|^{2p}_{H_1}+\|y\|^{2p}_{H_2}\right).\label{E3.2}
\end{align}
\end{lemma}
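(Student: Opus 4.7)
The overall strategy is a two-step Gronwall bootstrap that decouples the fast and slow components. First, I would derive a pointwise moment bound for $Y^{\vare}$ in terms of the running moments of $X^{\vare}$, exploiting the dissipation encoded in \ref{B2}; the $1/\vare$ factor in front of the drift produces, via Gronwall, a resolvent kernel $\vare^{-1}e^{-c(t-s)/\vare}$ whose time integral is $O(1)$ uniformly in $\vare\in(0,1)$. Then I would feed this bound into the variational energy identity for $X^{\vare}$ and close the loop by a second Gronwall estimate on the slow scale.

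For the fast equation I would apply the variational It\^o formula to $(\|Y^{\vare}_t\|^2_{H_2})^p$. The key pointwise estimate is the dissipative combination
\[
2\,{_{V^{*}_2}\langle B(t/\vare,X^{\vare}_t,\tilde Y^{\vare}_t),\tilde Y^{\vare}_t\rangle_{V_2}}+\|G_2(t/\vare,X^{\vare}_t,Y^{\vare}_t)\|^2_{L_2(U_2,H_2)}\leq -\gamma\|Y^{\vare}_t\|^2_{H_2}+C\bigl(1+\|X^{\vare}_t\|^2_{H_1}\bigr),
\]
which is obtained by centering at $(X^{\vare}_t,0)$, applying \ref{B2} to the centered increment, and absorbing the residual cross term $\langle B(t/\vare,X^{\vare}_t,0),\tilde Y^{\vare}_t\rangle$ via the $V_2$-coercivity in \ref{B3} together with the growth bound in \ref{B4}. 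The assumption $\zeta<1$ in \eqref{G2} is crucial: the quadratic-variation contribution is of order $\|Y^{\vare}_t\|^{2p-2+2\zeta}_{H_2}$, which Young's inequality absorbs into a small multiple of $\|Y^{\vare}_t\|^{2p}_{H_2}$. Taking expectations and solving the resulting scalar differential inequality yields
\[
\sup_{s\in[0,t]}\EE\|Y^{\vare}_s\|^{2p}_{H_2}\leq C_p\Bigl(1+\|y\|^{2p}_{H_2}+\sup_{s\in[0,t]}\EE\|X^{\vare}_s\|^{2p}_{H_1}\Bigr),
\]
with a constant independent of $\vare$.

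For the slow component I would apply the variational It\^o formula to $\|X^{\vare}_t\|^{2p}_{H_1}=(\|X^{\vare}_t\|^2_{H_1})^p$. The coercivity \ref{A3} produces the dissipative term $-2p\theta\|X^{\vare}_t\|^{2p-2}_{H_1}\|\tilde X^{\vare}_t\|^{\alpha}_{V_1}$, which I keep on the left-hand side; this is precisely the integral summand in \eqref{F3.1}. The Lipschitz/linear-growth bounds on $F$ and $G_1$ from \ref{A5}, combined with Young's inequality, dominate the remaining drift and diffusion contributions by $C(1+\|X^{\vare}_s\|^{2p}_{H_1}+\|Y^{\vare}_s\|^{2p}_{H_2})$. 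Taking $\sup_{t\leq T}$ and applying the Burkholder-Davis-Gundy inequality to the martingale part (with the standard $\delta$-shift to reabsorb the pathwise supremum into the left-hand side) produces a Gronwall-ready inequality; substituting the bound on $Y^{\vare}$ and applying Gronwall yields \eqref{F3.1}, and feeding the result back into the fast estimate delivers \eqref{E3.2}.

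The main obstacle will be keeping all constants $\vare$-independent while reconciling the two natural dissipations. The cross term $\langle B(t/\vare,X^{\vare}_t,0),\tilde Y^{\vare}_t\rangle$ forces the argument to invoke both the strong monotonicity \ref{B2} (which provides $H_2$-dissipation) and the coercivity \ref{B3} (which provides $V_2$-dissipation) simultaneously; neither is sufficient on its own. Likewise, the condition $\zeta<1$ is essential to prevent the diffusion from beating the dissipation at the $\|Y\|^{2p}_{H_2}$ level, and the resolvent identity $\vare^{-1}\int_0^t e^{-c(t-s)/\vare}ds\leq c^{-1}$ is what ultimately produces $\vare$-uniform constants. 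Beyond these points, the argument reduces to a routine variational computation along the lines of \cite{LRSX2023}.
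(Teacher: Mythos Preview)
Your proposal is correct and coincides with the paper's approach: the paper omits the proof entirely, stating that it follows the arguments in \cite{LRSX2023} because conditions \ref{A1}--\ref{A5} and \ref{B1}--\ref{B4} hold uniformly in $t\in\RR$. Your two-step bootstrap---first obtaining an $\vare$-uniform moment bound on $Y^{\vare}$ via the combined dissipations of \ref{B2} and \ref{B3} (the latter to absorb the $V_2$-cross term, exploiting $\zeta<1$), then closing a Gronwall loop on $X^{\vare}$ via coercivity \ref{A3} and BDG---is precisely the route taken in \cite{LRSX2023}.
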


\vspace{0.1cm}
\begin{lemma} \label{COX}
For any $T>0$, there exist constants $m>0$ and $C_{T}>0$ such that such that for all $\vare\in(0,1)$ and $\delta>0$ small enough,
\begin{align}\label{F3.7}
\mathbb{E}\left[\int^{T}_0\|X_{t}^{\varepsilon}-X_{t(\delta)}^{\varepsilon}\|^2_{H_1} dt\right]\leq C_{T}(1+\|x\|^m_{H_1}+\|y\|^m_{H_2})\delta^{1/2},
\end{align}
where $t(\delta):=[t/\delta]\delta$ and $[s]$ denotes the largest integer which is smaller than $s$.
\end{lemma}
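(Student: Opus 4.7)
The plan is to decompose $X^\vare_t-X^\vare_{t(\delta)}$ into its three natural pieces arising from \eref{mild solution} and estimate each in $L^2(\Omega\times[0,T];H_1)$. Write
\begin{equation*}
X^\vare_t-X^\vare_{t(\delta)}=J_1^\vare(t)+J_2^\vare(t)+J_3^\vare(t),
\end{equation*}
where $J_1^\vare(t):=\int_{t(\delta)}^t A(s/\vare,\tilde X^\vare_s)\,ds$, $J_2^\vare(t):=\int_{t(\delta)}^t F(s/\vare,X^\vare_s,Y^\vare_s)\,ds$, and $J_3^\vare(t):=\int_{t(\delta)}^t G_1(s/\vare,X^\vare_s)\,dW^1_s$.

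For $J_2^\vare$, I would apply Cauchy--Schwarz in the time variable together with the sublinear growth of $F$ from \ref{A5} to get $\|J_2^\vare(t)\|_{H_1}^2\leq C\delta\int_{t(\delta)}^t(1+\|X^\vare_s\|_{H_1}^2+\|Y^\vare_s\|_{H_2}^2)\,ds$; taking expectation, integrating in $t$, and invoking the moment estimates \eref{F3.1}--\eref{E3.2} yields an $O(\delta^2)$ contribution with the required polynomial dependence on $\|x\|_{H_1},\|y\|_{H_2}$. For $J_3^\vare$, It\^o's isometry combined with the linear growth of $G_1$ from \ref{A5} and \eref{F3.1} yields an $O(\delta)$ contribution. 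Both pieces are comfortably bounded by $C_T(1+\|x\|_{H_1}^m+\|y\|_{H_2}^m)\delta^{1/2}$ for $\delta$ small.

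The genuine obstacle is the nonlinear drift term $J_1^\vare$: since $A$ only takes values in $V_1^*$, one cannot control $\|J_1^\vare(t)\|_{H_1}$ by integrating $\|A\|_{H_1}$. My plan is to use the Gelfand-triple interpolation $\|u\|_{H_1}^2\leq \|u\|_{V_1^*}\|u\|_{V_1}$, valid for $u\in V_1$, applied to $X^\vare_t-X^\vare_{t(\delta)}$ via its $V_1$-representative, together with Cauchy--Schwarz in expectation and time:
\begin{equation*}
\mathbb{E}\int_0^T\|X^\vare_t-X^\vare_{t(\delta)}\|_{H_1}^2\,dt\leq \left(\mathbb{E}\int_0^T\|X^\vare_t-X^\vare_{t(\delta)}\|_{V_1^*}^2\,dt\right)^{1/2}\left(\mathbb{E}\int_0^T\bigl(\|\tilde X^\vare_t\|_{V_1}+\|\tilde X^\vare_{t(\delta)}\|_{V_1}\bigr)^2\,dt\right)^{1/2}.
\end{equation*}
The $V_1$-factor is controlled uniformly in $\vare$ and $\delta$ by the a priori bound on $\mathbb{E}\int_0^T\|\tilde X^\vare_s\|_{V_1}^\alpha\,ds$ from \eref{F3.1}, after H\"older's inequality in time tuned to the exponent $\alpha$. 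For the $V_1^*$-factor, use $\|\cdot\|_{V_1^*}\leq C\|\cdot\|_{H_1}$ on $J_2^\vare$ and $J_3^\vare$ (which already provide the $O(\delta)$ rate), while for $J_1^\vare$ apply Cauchy--Schwarz in time combined with the growth condition \ref{A4} to get $\|J_1^\vare(t)\|_{V_1^*}^2\leq C\delta\int_{t(\delta)}^t\|A(s/\vare,\tilde X^\vare_s)\|_{V_1^*}^2\,ds$; after Fubini, this inherits an additional factor of $\delta$. Combining the two factors (roughly $\sqrt{\delta}\cdot 1$) produces the desired rate $\delta^{1/2}$ with the stated polynomial dependence on the initial data.

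The main technical subtlety I anticipate is making the Gelfand-triple interpolation rigorous at the deterministic times $t(\delta)=k\delta$, at which the solution need not admit a $V_1$-valued version pointwise. Following \cite{LRSX2023,RSZ2024}, this is handled by first establishing the inequality for a finite-dimensional Galerkin approximation $X^{\vare,n}$, for which It\^o's formula applies freely to $\|X^{\vare,n}_t-X^{\vare,n}_{t(\delta)}\|_{H_1}^2$, and then passing to the limit in $n$ via weak/strong convergence and the lower semicontinuity of the relevant norms. Assembling the three bounds then yields \eref{F3.7}.
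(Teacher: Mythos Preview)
Your overall strategy---Gelfand-triple interpolation $\|u\|_{H_1}^2\leq\|u\|_{V_1^*}\|u\|_{V_1}$, then splitting into a $V_1^*$-factor (absorbing the nonlinear drift $A$) and a $V_1$-factor (controlled by coercivity)---is exactly the one used in \cite{LRSX2023}, to which the paper defers for the omitted proof. Your remark about carrying this out at the Galerkin level and passing to the limit is also correct.

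However, your displayed inequality applies Cauchy--Schwarz with exponents $(2,2)$, and this fails unless $\alpha=2$. If $\alpha\in(1,2)$, the $V_1$-factor $\mathbb{E}\int_0^T(\|\tilde X^\vare_t\|_{V_1}+\|\tilde X^\vare_{t(\delta)}\|_{V_1})^2\,dt$ is not controlled by the only available bound $\mathbb{E}\int_0^T\|\tilde X^\vare_s\|_{V_1}^\alpha\,ds$ from \eref{F3.1}; no ``H\"older in time tuned to $\alpha$'' can upgrade $L^\alpha$ to $L^2$ when $\alpha<2$. Conversely, if $\alpha>2$ then $\frac{\alpha}{\alpha-1}<2$, and \ref{A4} only controls $\|A\|_{V_1^*}^{\alpha/(\alpha-1)}$; your bound on $\int\|A\|_{V_1^*}^2\,ds$ would require $\int\|\tilde X^\vare_s\|_{V_1}^{2(\alpha-1)}\,ds<\infty$ with $2(\alpha-1)>\alpha$, which is not available.

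The fix, as carried out in \cite{LRSX2023}, is to replace $(2,2)$ by the natural dual pair $\bigl(\tfrac{\alpha}{\alpha-1},\alpha\bigr)$ after the interpolation:
\begin{equation*}
\mathbb{E}\int_0^T\|X^\vare_t-X^\vare_{t(\delta)}\|_{H_1}^2\,dt
\leq\Bigl(\mathbb{E}\int_0^T\|X^\vare_t-X^\vare_{t(\delta)}\|_{V_1^*}^{\frac{\alpha}{\alpha-1}}\,dt\Bigr)^{\frac{\alpha-1}{\alpha}}
\Bigl(\mathbb{E}\int_0^T\|\tilde X^\vare_t-\tilde X^\vare_{t(\delta)}\|_{V_1}^{\alpha}\,dt\Bigr)^{\frac{1}{\alpha}}.
\end{equation*}
Now the $V_1$-factor is bounded directly by \eref{F3.1}, and in the $V_1^*$-factor the $J_1^\vare$ contribution is estimated via $\|J_1^\vare(t)\|_{V_1^*}^{\alpha/(\alpha-1)}\leq\delta^{1/(\alpha-1)}\int_{t(\delta)}^t\|A(s/\vare,\tilde X^\vare_s)\|_{V_1^*}^{\alpha/(\alpha-1)}\,ds$, which \ref{A4} together with \eref{F3.1} controls. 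The stochastic piece $J_3^\vare$, via BDG, contributes $O(\delta^{\alpha/(2(\alpha-1))})$ to the $V_1^*$-factor, which after raising to the power $\tfrac{\alpha-1}{\alpha}$ yields exactly the rate $\delta^{1/2}$.
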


\vspace{0.1cm}
Inspired from the method of time discretization used in \cite{K1968},  we first construct an auxiliary process $\hat{Y}_{t}^{\varepsilon}\in H_2$ satisfying the following equation
$$
d\hat{Y}_{t}^{\vare}=\frac{1}{\vare}B\left(t/\vare,X^{\vare}_{t(\delta)},\hat{Y}_{t}^{\vare}\right)dt
+\frac{1}{\sqrt{\vare}}G_2\left(t/\vare,X^{\vare}_{t(\delta)},\hat{Y}_{t}^{\vare}\right)dW^{2}_t,\quad \hat{Y}_{0}^{\vare}=y\in H_2,
$$
where $\delta$ is a fixed positive number depending on $\vare$ and will be chosen later. Then for its $dt\otimes \PP$-equivalence class $\check{\hat{Y}}^{\vare}$ we have $\check{\hat{Y}}^{\vare}\in L^{\kappa}([0, T]\times \Omega, dt\otimes \PP; V_2)\cap L^2([0, T]\times\Omega, dt\otimes \PP; H_2)$ with $\kappa$ as in \ref{B3}, and for any $k\in \mathbb{N}$ and $t\in[k\delta,\min((k+1)\delta,T)]$, $\PP$-a.s.
\begin{align*}
\hat{Y}_{t}^{\varepsilon}=\hat{Y}_{k\delta}^{\varepsilon}+\frac{1}{\varepsilon}\int_{k\delta}^{t}
B(s/\vare,X_{k\delta}^{\varepsilon},\hat{Y}_{s}^{\varepsilon})ds
+\frac{1}{\sqrt{\varepsilon}}\int_{k\delta}^{t}G_2(s/\vare,X_{k\delta}^{\varepsilon},\hat{Y}_{s}^{\varepsilon})dW^{2}_s,
\end{align*}
which equals to
\begin{align}
\hat{Y}_{t}^{\varepsilon}=y+\frac{1}{\varepsilon}\int_{0}^{t}
B(s/\vare,X_{s(\delta)}^{\varepsilon},\tilde{\hat{Y}}_{s}^{\varepsilon})ds
+\frac{1}{\sqrt{\varepsilon}}\int_{0}^{t}G_2(s/\vare,X_{s(\delta)}^{\varepsilon},\tilde{\hat{Y}}_{s}^{\varepsilon})dW^{2}_s,\label{4.6a}
\end{align}
where $\tilde{\hat{Y}}^{\varepsilon}$ is any $V_2$-valued progressively measurable $dt\otimes \PP$-version of $\check{\hat{Y}}^{\vare}$.

\vspace{0.1cm}
By the construction of $\hat{Y}_{t}^{\varepsilon}$, we obtain the following estimates.

\begin{lemma} \label{DEY}
For any $T>0$, there exist a constant $C_{T}>0$ and $m\in\NN$ such that for all $\vare\in(0,1)$,
\begin{align}
\sup_{t\in[0,T]}\mathbb{E}\|\hat{Y}_{t}^{\vare}\|^2_{H_2}\leq
C_{T}(1+\|x\|^2_{H_1}+\|y\|^2_{H_2})\label{3.13a}
\end{align}
and
\begin{align}
\mathbb{E}\left(\int_0^{T}\|Y_{t}^{\varepsilon}-\hat{Y}_{t}^{\varepsilon}\|^2_{H_2}dt\right)\leq C_{T}\left(1+\|x\|^m_{H_1}+\|y\|^m_{H_2}\right)\delta^{1/2}. \label{3.14}
\end{align}
\end{lemma}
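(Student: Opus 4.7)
The plan is to apply the Itô formula in the Gelfand triple $V_2\subset H_2\subset V_2^*$ to $\|\cdot\|^2_{H_2}$ and to exploit the strong monotonicity from \ref{B2} together with the a priori bounds furnished by Lemmas \ref{PMY} and \ref{COX}.

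\emph{Step 1: the moment bound \eqref{3.13a}.} First I would note that $\hat Y^\vare$ satisfies an SPDE of the same structural form as the fast component $Y^\vare$ in \eqref{main equation}, with $X^\vare$ merely replaced by the piecewise-constant process $X^\vare_{\cdot(\delta)}$; hence the argument yielding the $p=1$ case of \eqref{E3.2} in Lemma \ref{PMY} transfers almost verbatim. Concretely, applying Itô to $\|\hat Y_t^\vare\|^2_{H_2}$ and combining \ref{B2} specialised at $(v_1,v)=(X^\vare_{s(\delta)},0)$ with a small multiple of \ref{B3}—so that the stray $\|\tilde{\hat Y}\|^\kappa_{V_2}$ term produced by applying Young's inequality and \ref{B4} to ${}_{V_2^*}\langle B(\cdot,X^\vare_{s(\delta)},0),\cdot\rangle_{V_2}$ is absorbed by the $-\eta\|\tilde{\hat Y}\|^\kappa_{V_2}$ coercivity contribution, and the $\|G_2\|^2_{L_2}$ term is tamed using $\zeta<1$—one arrives at
\begin{equation*}
\frac{d}{dt}\mathbb{E}\|\hat Y_t^\vare\|^2_{H_2}\leq\frac{1}{\vare}\Big(-\gamma_*\,\mathbb{E}\|\hat Y_t^\vare\|^2_{H_2}+C\big(1+\mathbb{E}\|X^\vare_{t(\delta)}\|^2_{H_1}\big)\Big),
\end{equation*}
for some $\gamma_*>0$ independent of $\vare$. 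Variation of constants, together with the uniform bound on $\sup_{s\in[0,T]}\mathbb{E}\|X^\vare_{s(\delta)}\|^2_{H_1}$ provided by Lemma \ref{PMY}, then yields \eqref{3.13a}.

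\emph{Step 2: the error bound \eqref{3.14}.} Setting $Z_t^\vare:=Y_t^\vare-\hat Y_t^\vare$, so $Z_0^\vare=0$, I would apply Itô's formula to $\|Z_t^\vare\|^2_{H_2}$ and invoke \ref{B2} directly with $(u_1,u)=(X^\vare_t,\tilde Y_t^\vare)$ and $(v_1,v)=(X^\vare_{t(\delta)},\tilde{\hat Y}_t^\vare)$: the $\|G_2(\cdot,X^\vare_t,Y_t^\vare)-G_2(\cdot,X^\vare_{t(\delta)},\hat Y_t^\vare)\|^2_{L_2}$ contribution from the quadratic variation is absorbed exactly, giving
\begin{equation*}
\frac{d}{dt}\mathbb{E}\|Z_t^\vare\|^2_{H_2}\leq\frac{1}{\vare}\Big(-\gamma\,\mathbb{E}\|Z_t^\vare\|^2_{H_2}+C\,\mathbb{E}\|X^\vare_t-X^\vare_{t(\delta)}\|^2_{H_1}\Big).
\end{equation*}
Variation of constants yields $\mathbb{E}\|Z_t^\vare\|^2_{H_2}\leq\tfrac{C}{\vare}\int_0^t e^{-\gamma(t-s)/\vare}\mathbb{E}\|X^\vare_s-X^\vare_{s(\delta)}\|^2_{H_1}\,ds$. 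Integrating in $t\in[0,T]$ and swapping the order of integration by Fubini, the inner integral $\int_s^T e^{-\gamma(t-s)/\vare}dt$ is bounded by $\vare/\gamma$, so
\begin{equation*}
\mathbb{E}\Big(\int_0^T\|Z_t^\vare\|^2_{H_2}\,dt\Big)\leq\frac{C}{\gamma}\int_0^T\mathbb{E}\|X^\vare_s-X^\vare_{s(\delta)}\|^2_{H_1}\,ds,
\end{equation*}
and \eqref{F3.7} of Lemma \ref{COX} delivers the $\delta^{1/2}$-rate.

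\emph{Main obstacle.} The delicate point lies in Step 1: a naive application of coercivity \ref{B3} alone would leave a positive $\vare^{-1}\|\hat Y\|^2_{H_2}$ contribution on the right-hand side, and Gronwall would then produce the useless bound $e^{CT/\vare}$. The $\vare$-uniformity of \eqref{3.13a} therefore hinges on extracting strict $H_2$-dissipation through the combination of \ref{B2}, \ref{B3} and \ref{B4} sketched above. Step 2 is by contrast significantly cleaner, since \ref{B2} is perfectly tailored to the error process and delivers the $-\gamma\|Z\|^2_{H_2}$ dissipation without requiring any such interplay.
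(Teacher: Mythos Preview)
Your proposal is correct and follows the standard route that the paper invokes by referring to \cite{LRSX2023}: It\^o's formula on $\|\hat Y^\vare_t\|^2_{H_2}$ combined with the dissipativity built into \ref{B2} (supplemented by \ref{B3}, \ref{B4} and $\zeta<1$ to handle the residual $B(\cdot,0)$ and $G_2$ terms) for \eqref{3.13a}, and It\^o's formula on $\|Y^\vare_t-\hat Y^\vare_t\|^2_{H_2}$ with \ref{B2} applied directly, followed by variation of constants, Fubini, and Lemma~\ref{COX} for \eqref{3.14}. Your identification of the key obstacle in Step~1---that naive coercivity alone would produce a useless $e^{CT/\vare}$ factor and that one must instead extract strict $H_2$-dissipation via \ref{B2}---is exactly the point, and your splitting $2\,{}_{V_2^*}\langle B,\tilde{\hat Y}\rangle_{V_2}$ into a dominant piece handled by \ref{B2} and a small remainder absorbed by the $-\eta\|\cdot\|^\kappa_{V_2}$ term from \ref{B3} is the right mechanism.
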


\subsection{Evolution system of measures}\label{sub3.2}
\vspace{0.1cm}

As we discussed in the subsection \ref{subsection 1.2}, in order to investigate the existence and uniqueness of an evolution system of measures for the frozen SDE \eqref{TDFrozenE}, we need to extend the time domain of the noise $W^2$ to the whole line. To do this, letting $\{W^{2,1}_t\}_{t\geq0}$ be an independent copy of the cylindrical Wiener process $\{W^{2}_t\}_{t\geq0}$. Define
\begin{equation*}
\bar{W}_t^2:=
\begin{cases}
W_t^{2}, & \mbox{if } t\in[0,+\infty), \\
W_{-t}^{2,1}, & \mbox{if } t\in(-\infty,0).
\end{cases}
\end{equation*}
The frozen equation associated to the fast motion for fixed slow component $x\in H_1$ is as follows,
\begin{eqnarray}\label{FEQ1}
dY_{t}=B(t,x,Y_{t})dt+G_2(t,x,Y_t)d\bar{W}_{t}^{2},\quad Y_{s}=y\in H_2,
\quad t\geq s.
\end{eqnarray}
Under the conditions \ref{B1}-\ref{B4}, for any fixed $x\in H_1$ and initial data $y\in H_2$, equation $\eref{FEQ1}$ has a unique variational solution
{$Y_{t}^{s,x,y}$} in the sense of Definition \ref{S.S.}, i.e., for  its $dt\otimes \PP$-equivalence class $\hat{Y}$ we have
$\hat{Y}^{x,y}\in L^{\kappa}({[s,s+T]}\times \Omega, dt\otimes \PP; V_2)\cap L^2({[s,s+T]}\times\Omega, dt\otimes \PP; H_2)$ with $\kappa$ as in \ref{B3}, we have $\PP$-a.s.
\begin{align}
Y^{s,x,y}_{t}=y+\int_{s}^{t}B(r,x,\tilde{Y}^{s,x,y}_{r})dr
+\int_{s}^{t}G_2(r,x,Y^{s,x,y}_{r})d\bar{W}^{2}_r,\label{SFE}
\end{align}
where $\{\tilde{Y}^{s,x,y}_t\}_{t\geq s}$ is any $V_2$-valued progressively measurable $dt\otimes\PP$-version of $\hat{Y}^{x,y}$. By the same arguments as in the proof of Lemma \ref{PMY}, it is easy to prove that
\begin{equation}\label{0224:05}
\sup_{t\geq s}\EE\|Y_{t}^{s,x,y}\|^2_{H_2}\leq C(1+\|x\|^2_{H_1}+\|y\|^2_{H_2}).
\end{equation}

Then we can obtain the following result, since its proof follows the same argument as used in the proof of \cite[Theorems 3.9 and 3.14]{CL2023}; see also \cite{CL2021}, we omit the detailed proof.

\begin{proposition}\label{Ergodicity}
Suppose that conditions \ref{B1}-\ref{B4} hold.
Then there exists $\{Y_t^x\}_{t\in\RR}$ satisfies for any $p\geq1$,
\begin{equation}\label{0224:06}
\sup_{t\in\RR}\EE \|Y_t^x\|_{H_2}^{2p}\leq C(1+\|x\|_{H_1}^{2p}),
\end{equation}
where $C>0$ is a constant, and for any $t\geq s$,
\begin{equation*}
Y_t^x=Y_s^x+\int_s^tB(r,x,\tilde{Y}_r^x)dr
+\int_s^t G_2(r,x,Y_r^x)d \bar{W}_r^2.
\end{equation*}
Moreover, for any $t\geq s$, $x\in H_1$ and $y\in H_2$,
\begin{equation}\label{WErodicity}
\EE\|Y_t^{s,x,y}-Y_t^x\|_{H_2}^2\leq
{\rm e}^{-\gamma(t-s)}\EE\|y-Y_s^x\|_{H_2}^2.
\end{equation}
\end{proposition}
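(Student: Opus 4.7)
The plan is to construct $\{Y_t^x\}_{t\in\RR}$ as a ``pullback'' limit, letting the starting time tend to $-\infty$. The first ingredient is a pathwise exponential contraction: applying It\^o's formula to $\|Y_t^{s,x,y_1}-Y_t^{s,x,y_2}\|_{H_2}^2$ and using \ref{B2} with $u_1=v_1=x$ (so that the $\|u_1-v_1\|_{H_1}^2$ term drops out), one gets
\begin{equation*}
\EE\|Y_t^{s,x,y_1}-Y_t^{s,x,y_2}\|_{H_2}^2\leq e^{-\gamma(t-s)}\|y_1-y_2\|_{H_2}^2,\quad t\geq s.
\end{equation*}
The second ingredient is a moment bound uniform in the starting time: applying It\^o's formula to $\|Y_t^{s,x,y}\|_{H_2}^{2p}$, using \ref{B3} to extract a dissipative term and using \ref{B4} together with $\zeta\in(0,1)$ and Young's inequality to absorb the diffusion contribution, one obtains, for some $c>0$,
\begin{equation*}
\EE\|Y_t^{s,x,y}\|_{H_2}^{2p}\leq C_p(1+\|x\|_{H_1}^{2p})+e^{-cp(t-s)}\|y\|_{H_2}^{2p},\quad t\geq s,
\end{equation*}
so that the $y$-dependent term vanishes as $s\to-\infty$.

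Next I would fix $y\in H_2$ and show that the sequence $\{Y_t^{-n,x,y}\}_{n\in\NN}$ is Cauchy in $L^2(\Omega;H_2)$ for each $t\in\RR$. By the Markov property, $Y_t^{-n,x,y}=Y_t^{-m,x,Y_{-m}^{-n,x,y}}$ for $n>m$ with $-m<t$, and the contraction estimate (applied conditionally on $\mathscr{F}_{-m}$) combined with the moment bound yields
\begin{equation*}
\EE\|Y_t^{-n,x,y}-Y_t^{-m,x,y}\|_{H_2}^2\leq e^{-\gamma(t+m)}\EE\|Y_{-m}^{-n,x,y}-y\|_{H_2}^2\leq Ce^{-\gamma(t+m)}(1+\|x\|_{H_1}^2+\|y\|_{H_2}^2),
\end{equation*}
which tends to zero as $m\to\infty$. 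Define $Y_t^x:=\lim_{n\to\infty}Y_t^{-n,x,y}$ in $L^2(\Omega;H_2)$. The contraction then shows the limit is independent of the chosen $y$, and passing to the limit in the moment bound (via Fatou) gives \eqref{0224:06}.

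To establish the integral equation, fix $s\in\RR$ and pick $n$ with $-n<s$. Starting from
\begin{equation*}
Y_t^{-n,x,y}=Y_s^{-n,x,y}+\int_s^t B(r,x,\tilde Y_r^{-n,x,y})\,dr+\int_s^t G_2(r,x,Y_r^{-n,x,y})\,d\bar W_r^2,
\end{equation*}
one passes to the limit as $n\to\infty$: the initial datum converges in $L^2(\Omega;H_2)$ by construction; the stochastic integral passes to the limit by the It\^o isometry and the Lipschitz-type control on $G_2$ coming from \ref{B2}; and for the drift one extracts, via the coercivity \ref{B3}, a subsequence of $\tilde Y^{-n,x,y}$ converging weakly in $L^\kappa([s,t]\times\Omega;V_2)$ and identifies the weak limit of $B(\cdot,x,\tilde Y^{-n,x,y})$ with $B(\cdot,x,\tilde Y^x)$ by the standard monotonicity trick based on \ref{B2}. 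Finally, for any auxiliary $y'\in H_2$, the Markov identity $Y_t^{-n,x,y'}=Y_t^{s,x,Y_s^{-n,x,y'}}$ and the conditional contraction give
\begin{equation*}
\EE\|Y_t^{s,x,y}-Y_t^{-n,x,y'}\|_{H_2}^2\leq e^{-\gamma(t-s)}\EE\|y-Y_s^{-n,x,y'}\|_{H_2}^2,
\end{equation*}
and sending $n\to\infty$ produces \eqref{WErodicity}.

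The main technical obstacle is the passage to the limit in the drift integral, since $B$ is only monotone (not continuous) as a map $V_2\to V_2^*$. This is handled by the standard monotonicity/weak-convergence argument, which is simplified here because the slow component is frozen at $x$, so the strong monotonicity in \ref{B2} is genuinely dissipative in the fast variable and makes the identification of the weak limit routine.
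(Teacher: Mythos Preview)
Your proposal is correct and follows precisely the standard pullback construction used in the references \cite{CL2021,CL2023} to which the paper defers (the paper itself omits the proof): exponential contraction in $L^2$ from \ref{B2}, uniform $2p$-moments from \ref{B3}--\ref{B4}, the Cauchy argument for $Y_t^{-n,x,y}$ as $n\to\infty$, and the monotonicity trick to identify the limiting drift. There is nothing to add.
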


Let $\{P^{x}_{s,t}\}_{t\geq s}$ be the transition semigroup of the Markov process $\{Y_{t}^{s,x,y}\}_{t\geq s}$,
that is, for any bounded measurable function $\varphi$ on $H_2$,
\begin{align*}
P^x_{s,t} \varphi(y)= \EE \left[\varphi\left(Y_{t}^{s,x,y}\right)\right], \quad y \in H_2,\ \ t\geq s.
\end{align*}
Based on Proposition \ref{Ergodicity} and using the argument as used in \cite{DR2006}, we can easily obtain the following result:
\begin{corollary}\label{cor0528}
For any $x\in H_1$, we denote $\mu_t^x:=\PP\circ[Y_t^x]^{-1},t\in\RR$, where
$\{Y_t^x\}_{t\in\RR}$ is as described in Proposition \ref{Ergodicity}.
Then we have
\begin{equation}\label{0317:03}
\sup_{t\in\RR}\int_{H_2}\|y\|_{H_2}^2\mu_t^x(dy)\leq C(1+\|x\|_{H_1}^2),
\end{equation}
and $\{\mu_t^x\}_{t\in\RR}$ is the evolution system of measures for
$\{P^{x}_{s,t}\}_{t\geq s}$, that is for any $t\ge s$, $x\in H_1$
and $\varphi\in C_b(H_2)$,
\begin{equation}\label{ESM}
\int_{H_2}P^x_{s,t} \varphi(y)\,\mu^x_s(dy)=\int_{H_2}\varphi(y)\,\mu^x_t(dy).
\end{equation}
Meanwhile the mapping $\mu_{\cdot}^x:\RR\rightarrow \mathcal P(H_2)$ is unique with the properties
\eqref{0317:03} and \eqref{ESM}. Moreover, for any Lipschitz continuous function $\phi$ on $H_2$,
\begin{equation}\label{WErodicity2}
\left|P^x_{s,t} \phi(y)-\int_{H_2}\phi(z)\,\mu^x_t(dz) \right|
\leq C {\rm Lip}(\phi)(1+\|x\|_{H_1}+\|y\|_{H_2}){\rm e}^{-\frac{\gamma}{2}(t-s)},
\end{equation}
where ${\rm Lip}(\phi):=\sup_{x\neq y}|\phi(x)-\phi(y)|/\|x-y\|_{H_2}$.
\end{corollary}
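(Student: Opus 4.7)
The proposal is to establish the four claims in order: moment bound, evolution system identity, exponential mixing estimate, and uniqueness.

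First, the moment bound \eqref{0317:03} is essentially a restatement of \eqref{0224:06}. Since $\mu_t^x = \PP \circ [Y_t^x]^{-1}$ by definition, a change of variables gives
\[
\int_{H_2}\|y\|_{H_2}^2\,\mu_t^x(dy) = \EE\|Y_t^x\|_{H_2}^2 \leq C(1+\|x\|_{H_1}^2),
\]
with the right-hand side uniform in $t$. For the evolution system identity \eqref{ESM}, the key observation is that $\{Y_t^x\}_{t\geq s}$ (restricted to $[s,\infty)$) solves the same SDE driven by $\bar W^2$ on $[s,\infty)$ with initial datum $Y_s^x$ at time $s$. By pathwise uniqueness for \eqref{FEQ1} and the independence of the driving noise increments on $[s,\infty)$ from $\mathcal F_s$, one has
\[
\EE\!\left[\varphi(Y_t^x)\mid \mathcal F_s\right] = P^x_{s,t}\varphi(Y_s^x)\quad \text{a.s.}, \quad t\geq s, \varphi\in C_b(H_2).
\]
Taking expectations and using the definition of $\mu_s^x,\mu_t^x$ yields \eqref{ESM}.

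Next, the exponential ergodicity bound \eqref{WErodicity2} follows from \eqref{WErodicity} by a Lipschitz/Cauchy--Schwarz argument. Indeed, for any Lipschitz $\phi$,
\[
\left|P^x_{s,t}\phi(y) - \int_{H_2}\phi(z)\,\mu_t^x(dz)\right|
= \left|\EE\phi(Y_t^{s,x,y}) - \EE\phi(Y_t^x)\right|
\leq \mathrm{Lip}(\phi)\,\EE\|Y_t^{s,x,y}-Y_t^x\|_{H_2}.
\]
Applying Jensen's inequality and then \eqref{WErodicity} gives
\[
\EE\|Y_t^{s,x,y}-Y_t^x\|_{H_2} \leq \mathrm{e}^{-\gamma(t-s)/2}\bigl(\EE\|y-Y_s^x\|_{H_2}^2\bigr)^{1/2} \leq \mathrm{e}^{-\gamma(t-s)/2}\bigl(\|y\|_{H_2}+(\EE\|Y_s^x\|_{H_2}^2)^{1/2}\bigr),
\]
and combining with \eqref{0224:06} produces the factor $C(1+\|x\|_{H_1}+\|y\|_{H_2})$.

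Finally, for uniqueness, I would argue as follows. Suppose $\{\nu_t^x\}_{t\in\RR}$ is another family of probability measures on $H_2$ satisfying \eqref{0317:03} and \eqref{ESM}. Fix $t\in\RR$ and a Lipschitz bounded $\phi$, and set $c:=\int\phi\,d\mu_t^x$. Using \eqref{ESM} for both families and subtracting the constant $c$ (which is permissible since both measures are probabilities) gives
\[
\int\phi\,d\nu_t^x - \int\phi\,d\mu_t^x = \int\bigl(P^x_{s,t}\phi - c\bigr)\,d\nu_s^x - \int\bigl(P^x_{s,t}\phi - c\bigr)\,d\mu_s^x.
\]
Estimating each term by \eqref{WErodicity2} and using that $\int(1+\|y\|_{H_2})\,d\nu_s^x(y)$ and the analogous integral for $\mu_s^x$ are bounded uniformly in $s$ by \eqref{0317:03}, one obtains
\[
\left|\int\phi\,d\nu_t^x - \int\phi\,d\mu_t^x\right| \leq C\,\mathrm{Lip}(\phi)(1+\|x\|_{H_1})\,\mathrm{e}^{-\gamma(t-s)/2}.
\]
Letting $s\to-\infty$ forces the left side to vanish, and since Lipschitz bounded functions are measure-determining on $H_2$, we conclude $\nu_t^x=\mu_t^x$. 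The main subtlety here is that uniqueness is obtained only within the class satisfying \eqref{0317:03}, because one needs a uniform moment bound on $\nu_s^x$ to push $s\to-\infty$; aside from this bookkeeping, every step is a direct consequence of Proposition \ref{Ergodicity} combined with the Markov property.
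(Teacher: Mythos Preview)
Your proposal is correct and essentially fills in the details the paper omits: the paper merely states that the corollary follows from Proposition~\ref{Ergodicity} together with the argument in \cite{DR2006}, and what you have written is precisely that standard argument (change of variables for the moment bound, Markov property for \eqref{ESM}, Lipschitz estimate plus \eqref{WErodicity} for \eqref{WErodicity2}, and letting $s\to-\infty$ for uniqueness). One minor simplification in the uniqueness step: since $c=\int\phi\,d\mu_t^x$, the term $\int(P^x_{s,t}\phi-c)\,d\mu_s^x$ actually vanishes identically by \eqref{ESM}, so only the $\nu_s^x$-integral needs to be estimated.
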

\begin{remark}\rm
In fact, the evolution system of measures $\{\mu_t^x\}_{t\in\RR}$ is uniformly almost
periodic provided $B$ and $G_2$ are uniformly almost periodic; see Lemma \ref{Le5.2} for more details.
\end{remark}

\section{Proof of main results} \label{Sec4}

In this section, we give the detailed proofs of Theorems \ref{main result 1} and \ref{main result 2}.

\subsection{Proof of first main result}

For any $R>0$, define
\begin{align*}
&\tau^{\varepsilon,1}_R:=\inf\left\{t\geq 0:\int_0^t(1+\|\tilde{X}^{\varepsilon}_s\|_{V_1}^{\alpha})(1+\|X^{\varepsilon}_s\|_{H_1}^{\beta})ds\geq R\right\},\\
&\tau^{\varepsilon,2}_R:=\inf\left\{t\geq 0:\int_0^t(1+\|\tilde{\bar{X}}^{\varepsilon}_s\|_{V_1}^{\alpha})(1+\|\bar{X}^{\varepsilon}_s\|_{H_1}^{\beta})ds\geq R\right\}.
\end{align*}
Let $\tau^{\varepsilon}_R:=\tau^{\varepsilon,1}_R\wedge\tau^{\varepsilon,2}_R$.
We prove that $X_{t}^{\vare}$ strongly converges to $\bar{X}^{\varepsilon}_t$
when time before the stopping time $\tau^{\varepsilon}_R$ firstly,
then the proof of Theorem \ref{main result 1} will follow from the fact that
the difference process $X^{\vare}_t-\bar{X}^{\varepsilon}_t$ after the stopping time
is sufficient small when $R$ is large enough.
We remark that the constants $C$ in the following proofs depend on the initial values
$x$ and $y$ due to repeated use of moment estimates for $X_t^\varepsilon$, $Y_t^\varepsilon$,
$\bar{X}_t^\varepsilon$ and $\hat{Y}_t^\varepsilon$. For simplicity,
we will suppress this dependence in what follows.

We begin by recalling the averaged equation \eqref{AVE1}. Since $F$ satisfies condition \eqref{0525:01}, it is easy to check that the averaged coefficient $\bar{F}$ defined in \eqref{e:drift} is Lipschitz continuous and linear growth, i.e. for all $t\in \RR,u,v\in H_1$,
\begin{equation}\label{0528:03}
\|\bar{F}(t,u)-\bar{F}(t,v)\|_{H_1}\leq C\|u-v\|_{H_1},\quad
\|\bar{F}(t,u)\|_{H_1}\leq C\left(1+\|u\|_{H_1}\right).
\end{equation}
Thus equation $\eref{AVE1}$ has a unique variational solution $\bar X^{\vare}$ in the sense of Definition \ref{S.S.}, i.e., for its $dt\otimes \PP$-equivalence class $\check{\bar{X}}^{\varepsilon}$ we have $\check{\bar{X}}^{\varepsilon}\in L^{\alpha}([0, T]\times \Omega, dt\otimes \PP; V_1)\cap L^2([0, T]\times\Omega, dt\otimes \PP; H_1)$ with $\alpha$ as in \ref{A3}, we have $\PP$-a.s.
\begin{align}\label{4.6b}
\bar{X}^{\varepsilon}_{t}
=x+\int_{0}^{t}A({s/\varepsilon},\tilde{\bar{X}}^{\varepsilon}_{s})ds
+\int_{0}^{t}\bar{F}(s/\varepsilon,\bar{X}^{\varepsilon}_{s})ds
+\int_{0}^{t}G_1({s/\varepsilon},\bar{X}^{\varepsilon}_{s})dW^{1}_s,
\end{align}
where $\tilde{\bar{X}}^{\varepsilon}$ is any $V_1$-valued progressively measurable $dt\otimes \PP$-version of $\check{\bar{X}}^{\varepsilon}$.

Here we present the following estimates. Since their proofs follow essentially the same
arguments as those in Lemmas \ref{PMY} and \ref{COX}, we omit the details here for brevity.
\begin{lemma}\label{L3.8}
For any $T>0$, $p\geq 1$, there exist constants $C_{p,T},C_T>0$ and $m>0$ such that for any $x\in H_1$,
\begin{align}\label{0318:01}
\mathbb{E}\left(\sup_{t\in[0,T]}\|\bar{X}^{\varepsilon}_{t}\|^{2p}_{H_1}\right)+\EE\left(\int_0^T\|\bar{X}^{\varepsilon}_{t}\|^{2p-2}_{H_1}\|\tilde{\bar{X}}^{\varepsilon}_{t}\|_{V_1}^{\alpha}dt\right)\leq C_{p,T}(1+\|x\|^{2p}_{H_1})
\end{align}
and
\begin{align}
\mathbb{E}\left[\int^{T}_0\|\bar{X}^{\varepsilon}_{t}-\bar{X}^{\varepsilon}_{t(\delta)}\|^2_{H_1} dt\right]\leq C_{T}\delta^{1/2}(1+\|x\|^m_{H_1}).\label{barXT}
\end{align}
\end{lemma}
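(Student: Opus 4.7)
The plan is to mirror the arguments used for Lemmas \ref{PMY} and \ref{COX}, since the averaged slow equation \eqref{AVE1} fits into the standard fully local monotone variational framework once one observes, via \eqref{0528:03}, that $\bar F(t/\varepsilon,\cdot)$ is globally Lipschitz and of linear growth in $H_1$ uniformly in $t$ and $\varepsilon$. In particular, the coefficients $A(t/\varepsilon,\cdot)$, $\bar F(t/\varepsilon,\cdot)$, $G_1(t/\varepsilon,\cdot)$ satisfy \ref{A1}--\ref{A5} (with $\bar F$ playing the role of a Lipschitz lower-order drift), so all a priori estimates available for $X^\varepsilon$ are available for $\bar X^\varepsilon$ as well.

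For the moment bound \eqref{0318:01}, I would apply It\^o's formula (in the variational sense of Liu--R\"ockner) to $\|\bar X^\varepsilon_t\|_{H_1}^{2p}$. The drift pairing produces $2p\|\bar X^\varepsilon_t\|_{H_1}^{2p-2}\,{_{V_1^*}}\langle A(t/\varepsilon,\tilde{\bar X}^\varepsilon_t),\bar X^\varepsilon_t\rangle_{V_1}$, which by coercivity \ref{A3} is bounded above by $2p\|\bar X^\varepsilon_t\|_{H_1}^{2p-2}\bigl(C\|\bar X^\varepsilon_t\|_{H_1}^2-\theta\|\tilde{\bar X}^\varepsilon_t\|_{V_1}^{\alpha}+C\bigr)$. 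The $\bar F$-contribution is controlled by \eqref{0528:03} and Young's inequality, while the It\^o correction uses the linear growth of $G_1$ from \ref{A5}. Absorbing the $\|\tilde{\bar X}^\varepsilon\|_{V_1}^{\alpha}$ term on the left, taking $\sup_{t\in[0,T]}$, applying the Burkholder--Davis--Gundy inequality to the stochastic integral and using Young to move the resulting $\sup$ back to the left, a Gronwall argument delivers \eqref{0318:01}.

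For the time increment \eqref{barXT}, I would start from the integral form \eqref{4.6b} and split
\[
\bar X^\varepsilon_t-\bar X^\varepsilon_{t(\delta)}
=\int_{t(\delta)}^t A(s/\varepsilon,\tilde{\bar X}^\varepsilon_s)\,ds
+\int_{t(\delta)}^t \bar F(s/\varepsilon,\bar X^\varepsilon_s)\,ds
+\int_{t(\delta)}^t G_1(s/\varepsilon,\bar X^\varepsilon_s)\,dW^1_s.
\]
The $\bar F$-integral is bounded in $H_1$ directly by linear growth and $|t-t(\delta)|\le\delta$; the stochastic integral is handled by It\^o's isometry (or BDG) together with the growth of $G_1$, producing a factor $\delta$ under the time integral. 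The difficulty is the $A$-term, which lives only in $V_1^*$. The plan is to bound it in $V_1^*$ using H\"older in time and the growth estimate \ref{A4}, and then pass back to the $H_1$-norm via the interpolation inequality $\|v\|_{H_1}^2\le\|v\|_{V_1}\|v\|_{V_1^*}$ afforded by the Gelfand triple. Thus
\[
\EE\!\int_0^T\!\|\bar X^\varepsilon_t-\bar X^\varepsilon_{t(\delta)}\|_{H_1}^2\,dt
\le\Bigl(\EE\!\int_0^T\!\|\bar X^\varepsilon_t-\bar X^\varepsilon_{t(\delta)}\|_{V_1}^{\alpha}\,dt\Bigr)^{\!1/\alpha}\!
\Bigl(\EE\!\int_0^T\!\|\bar X^\varepsilon_t-\bar X^\varepsilon_{t(\delta)}\|_{V_1^*}^{\alpha/(\alpha-1)}\,dt\Bigr)^{\!(\alpha-1)/\alpha},
\]
after appropriate use of Young. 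The first factor is controlled by \eqref{0318:01}, and the second by the three-term decomposition above.

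The main obstacle is the last step: carrying out the $V_1^*$-estimate for the $A$-integral while simultaneously extracting the $\delta^{1/2}$ rate. The nonlinear polynomial growth from \ref{A4}, together with the extra $(1+\|v\|_{H_1}^{\beta})$ weight coming from fully local monotonicity, forces one to use the joint bound on $\EE\int_0^T\|\bar X^\varepsilon\|_{H_1}^{2p-2}\|\tilde{\bar X}^\varepsilon\|_{V_1}^{\alpha}\,dt$ from \eqref{0318:01} with a carefully chosen $p$ (and hence $m$) so that the product of the two H\"older factors, combined with the $|t-t(\delta)|$-weight from the drift integrals, still yields $\delta^{1/2}$ after integration in $t$. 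Once that bookkeeping is settled, the bound \eqref{barXT} follows exactly as in the proof of Lemma \ref{COX}.
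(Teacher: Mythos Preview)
Your proposal is correct and follows essentially the same approach as the paper: the paper itself omits the proof entirely, stating only that it ``follows essentially the same arguments as those in Lemmas \ref{PMY} and \ref{COX}'', which are in turn the standard $p$-th moment It\^o/Gronwall argument and the $V_1^*$--$V_1$ interpolation trick from \cite{LRSX2023} that you have outlined.
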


\begin{proposition} \label{ESX}
Assume that conditions \ref{A1}-\ref{A5} and \ref{B1}-\ref{B4} hold. Then
for any $(x,y)\in H_1\times H_2$ and $T,R>0$,
there exist constants $m>0$ and $C_{R,T}>0$ such that for all $\vare\in(0,1)$,
\begin{align}\label{CBS}
\mathbb{E}\left(\sup_{t\in[0, T\wedge {\tau_R^\varepsilon}]}
\|X_{t}^{\vare}-\bar{X}^{\varepsilon}_{t}\|^2_{H_1}\right)\leq C_{R,T}\left(1+\|x\|^m_{H_1}+\|y\|^m_{H_2}\right)
\varepsilon^{1/6}.
\end{align}
\end{proposition}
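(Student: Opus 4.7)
The plan is to apply It\^o's formula to $\|X_t^{\varepsilon}-\bar X_t^{\varepsilon}\|_{H_1}^2$ on the stochastic interval $[0,T\wedge\tau_R^{\varepsilon}]$ and conclude by Gronwall. Splitting the drift difference as
\[
[A(s/\varepsilon,X_s^\varepsilon)-A(s/\varepsilon,\bar X_s^\varepsilon)]+[F(s/\varepsilon,X_s^\varepsilon,Y_s^\varepsilon)-F(s/\varepsilon,\bar X_s^\varepsilon,Y_s^\varepsilon)]+[F(s/\varepsilon,\bar X_s^\varepsilon,Y_s^\varepsilon)-\bar F(s/\varepsilon,\bar X_s^\varepsilon)],
\]
the first bracket is handled by the fully local monotonicity \ref{A2}: the weight $\rho(\tilde{\bar X}_s^\varepsilon)+\eta(\tilde X_s^\varepsilon)$ integrates to at most $CR$ on $[0,T\wedge\tau_R^{\varepsilon}]$ by the very definition of $\tau_R^{\varepsilon}$, so it is admissible in Gronwall. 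The second bracket contributes a multiple of $\|X_s^\varepsilon-\bar X_s^\varepsilon\|_{H_1}^2$ by the Lipschitz bound in \ref{A5}, as does the It\^o correction $\|G_1(s/\varepsilon,X_s^\varepsilon)-G_1(s/\varepsilon,\bar X_s^\varepsilon)\|_{L_2(U_1,H_1)}^2$; the stochastic integral is dispatched with BDG and Young's inequality. What remains is the genuine averaging error
\[
\Psi_\varepsilon(u):=2\,\mathbb{E}\int_0^{u\wedge\tau_R^\varepsilon}\bigl\langle X_s^\varepsilon-\bar X_s^\varepsilon,\,F(s/\varepsilon,\bar X_s^\varepsilon,Y_s^\varepsilon)-\bar F(s/\varepsilon,\bar X_s^\varepsilon)\bigr\rangle_{H_1}\,ds,\qquad u\in[0,T],
\]
on whose control the entire rate $\varepsilon^{1/6}$ depends.

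To bound $\Psi_\varepsilon$ I would adopt the Khasminskii time-discretization technique. Fix a small parameter $\delta=\delta(\varepsilon)$ to be chosen later, and substitute $Y_s^\varepsilon$ by the auxiliary fast process $\hat Y_s^\varepsilon$ of \eqref{4.6a}, and $X_s^\varepsilon,\bar X_s^\varepsilon$ by their piecewise-constant versions $X_{s(\delta)}^\varepsilon,\bar X_{s(\delta)}^\varepsilon$. The Lipschitz continuity of $F$ in both arguments and of $\bar F$ (from \eqref{0528:03}), combined with Cauchy--Schwarz, Young's inequality and Lemmas \ref{COX}, \ref{DEY}, \ref{L3.8}, shows that these three replacement errors are, modulo a Gronwall term, of order $\delta^{1/2}$. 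The central remaining piece is
\[
D_\varepsilon:=\sum_{k=0}^{\lfloor T/\delta\rfloor}\mathbb{E}\left\langle X_{k\delta}^\varepsilon-\bar X_{k\delta}^\varepsilon,\,\int_{k\delta}^{(k+1)\delta\wedge T\wedge\tau_R^\varepsilon}\bigl[F(s/\varepsilon,\bar X_{k\delta}^\varepsilon,\hat Y_s^\varepsilon)-\bar F(s/\varepsilon,\bar X_{k\delta}^\varepsilon)\bigr]\,ds\right\rangle_{H_1}.
\]
On each cell $[k\delta,(k+1)\delta]$, conditionally on $\mathscr{F}_{k\delta}$ the process $\hat Y^\varepsilon$ is (a time rescaling of) the frozen SPDE \eqref{FEQ1} with slow component fixed at the $\mathscr{F}_{k\delta}$-measurable $X_{k\delta}^\varepsilon$, run over the long horizon $\delta/\varepsilon$. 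Since $\bar F(t,x)=\int F(t,x,y)\mu_t^x(dy)$ is precisely the $\mu_t^x$-average of $F(t,x,\cdot)$, the exponential ergodicity \eqref{WErodicity2} of Corollary \ref{cor0528}, after the time change $\tau=s/\varepsilon$, yields
\[
\bigl\|\mathbb{E}\bigl[F(s/\varepsilon,\bar X_{k\delta}^\varepsilon,\hat Y_s^\varepsilon)-\bar F(s/\varepsilon,\bar X_{k\delta}^\varepsilon)\,\big|\,\mathscr{F}_{k\delta}\bigr]\bigr\|_{H_1}\leq C\bigl(1+\|X_{k\delta}^\varepsilon\|_{H_1}+\|\bar X_{k\delta}^\varepsilon\|_{H_1}+\|\hat Y_{k\delta}^\varepsilon\|_{H_2}\bigr)\mathrm{e}^{-\gamma(s-k\delta)/(2\varepsilon)}.
\]
Integrating over each cell produces a factor $O(\varepsilon)$; summing the $O(T/\delta)$ cells by Cauchy--Schwarz together with the moment bounds of Lemmas \ref{PMY}, \ref{DEY}, \ref{L3.8} leads to $|D_\varepsilon|\leq C_{R,T}\sqrt{\varepsilon/\delta}\,\bigl(\mathbb{E}\sup_{t\le T\wedge\tau_R^\varepsilon}\|X_t^\varepsilon-\bar X_t^\varepsilon\|_{H_1}^2\bigr)^{1/2}$.

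Collecting all the bounds, applying Young's inequality to absorb the $\sqrt{\mathbb{E}\sup}$ factor from $D_\varepsilon$ into the left-hand side, and closing with Gronwall (the $R$-dependence entering through the exponential of the $CR$ bound from \ref{A2}), I would obtain an estimate of the form $C_{R,T}(1+\|x\|_{H_1}^m+\|y\|_{H_2}^m)(\delta^{1/2}+\varepsilon/\delta)$; balancing $\delta=\varepsilon^{2/3}$ then gives the announced rate $\varepsilon^{1/6}$ after the concluding square-root step. The main obstacle is the ergodic estimate for $D_\varepsilon$: the frozen equation governing $\hat Y^\varepsilon$ on each cell has a \emph{random} slow parameter $X_{k\delta}^\varepsilon$ and random initial data $\hat Y_{k\delta}^\varepsilon$, so the exponential decay in \eqref{WErodicity2} must be used uniformly in this parameter and then integrated against the fluctuations of $X^\varepsilon-\bar X^\varepsilon$. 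This is the step that crucially requires the dissipativity \ref{B2} (supplying the rate $\gamma$) and the Lipschitz character of $\bar F$; the fully local (as opposed to merely local) monotonicity of $A$ presents no additional difficulty here, having already been absorbed into the stopping time $\tau_R^{\varepsilon}$.
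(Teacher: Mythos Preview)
Your overall strategy is right, but there is a genuine mismatch in the ergodic step. In your decomposition the averaging error is anchored at $\bar X^{\varepsilon}$, so after the Khasminskii replacements you are left with
\[
D_\varepsilon=\sum_k\mathbb{E}\Big\langle X_{k\delta}^{\varepsilon}-\bar X_{k\delta}^{\varepsilon},\int_{k\delta}^{(k+1)\delta}\big[F(s/\varepsilon,\bar X_{k\delta}^{\varepsilon},\hat Y_s^{\varepsilon})-\bar F(s/\varepsilon,\bar X_{k\delta}^{\varepsilon})\big]\,ds\Big\rangle_{H_1}.
\]
But the auxiliary process $\hat Y^{\varepsilon}$ of \eqref{4.6a} is frozen at $X_{k\delta}^{\varepsilon}$, not at $\bar X_{k\delta}^{\varepsilon}$. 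Consequently \eqref{WErodicity2} only gives
\[
\mathbb{E}\big[F(s/\varepsilon,\bar X_{k\delta}^{\varepsilon},\hat Y_s^{\varepsilon})\,\big|\,\mathscr{F}_{k\delta}\big]
\longrightarrow \int_{H_2} F(s/\varepsilon,\bar X_{k\delta}^{\varepsilon},z)\,\mu_{s/\varepsilon}^{X_{k\delta}^{\varepsilon}}(dz),
\]
which is \emph{not} $\bar F(s/\varepsilon,\bar X_{k\delta}^{\varepsilon})=\int F(s/\varepsilon,\bar X_{k\delta}^{\varepsilon},z)\,\mu_{s/\varepsilon}^{\bar X_{k\delta}^{\varepsilon}}(dz)$. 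The residual $\int F(s/\varepsilon,\bar X_{k\delta}^{\varepsilon},z)\,[\mu_{s/\varepsilon}^{X_{k\delta}^{\varepsilon}}-\mu_{s/\varepsilon}^{\bar X_{k\delta}^{\varepsilon}}](dz)$ does \emph{not} decay exponentially in $(s-k\delta)/\varepsilon$, so your displayed bound on $\|\mathbb{E}[\,\cdot\,|\mathscr{F}_{k\delta}]\|_{H_1}$ is false as stated. The paper avoids this by splitting the $F$-difference the other way---first $\bar F(s/\varepsilon,X_s^{\varepsilon})-\bar F(s/\varepsilon,\bar X_s^{\varepsilon})$, then $F(s/\varepsilon,X_s^{\varepsilon},Y_s^{\varepsilon})-\bar F(s/\varepsilon,X_s^{\varepsilon})$---so that the discretized averaging term reads $F(s/\varepsilon,X_{k\delta}^{\varepsilon},\hat Y_s^{\varepsilon})-\bar F(s/\varepsilon,X_{k\delta}^{\varepsilon})$, matching the freezing in $\hat Y^{\varepsilon}$. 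Your version is salvageable if you additionally prove Lipschitz dependence $x\mapsto\mu_t^x$ (which follows from \ref{B2} applied to the entire solutions $Y_t^{x_1},Y_t^{x_2}$) and feed the resulting $C\|X_{k\delta}^{\varepsilon}-\bar X_{k\delta}^{\varepsilon}\|_{H_1}$ back into Gronwall, but that step is missing.

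A second, smaller point: the paper does not use the first-moment conditional-expectation bound you describe but instead a second-moment (covariance) argument, expanding $\mathbb{E}\|\int_{k\delta}^{(k+1)\delta}[\cdots]\,ds\|_{H_1}^2$ as a double integral of $\Psi_k^{\varepsilon}(s,r)$ and invoking the Markov property at the earlier time $r$ to get $\Psi_k^{\varepsilon}(s,r)\le C\,e^{-\gamma(s-r)/(2\varepsilon)}$. This yields $(\varepsilon/\delta)^{1/2}$ after summing the cells, which together with the $\delta^{1/4}$ replacement error (not $\delta^{1/2}$: one of the replacement terms pairs a time-increment with a merely linearly-growing factor and needs Cauchy--Schwarz in both $t$ and $\omega$) gives $\varepsilon^{1/6}$ at $\delta=\varepsilon^{2/3}$. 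Your first-moment route, once the mismatch above is repaired, would in fact give a contribution $O(\varepsilon/\delta)$ per sum rather than the $\sqrt{\varepsilon/\delta}$ you wrote; either way $\delta=\varepsilon^{2/3}$ still delivers $\varepsilon^{1/6}$.
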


\begin{proof}
We divide the proof into three steps.

\vspace{2mm}
\textbf{Step 1.} Applying It\^{o}'s formula, \ref{A2} and \eqref{0525:01}, we have
\begin{align}\label{0528:04}
\|X_{t}^{\vare}-\bar{X}^{\varepsilon}_{t}\|^2_{H_1}
=&
2\int_0^t{_{V^{*}_1}}\langle A(s/\varepsilon,\tilde{X}_s^\varepsilon)
-A(s/\varepsilon,\tilde{\bar{X}}^{\varepsilon}_s), \tilde{X}_{s}^{\vare}-\tilde{\bar{X}}^{\varepsilon}_{s}\rangle_{V_1} ds\nonumber\\
&
+\int_0^t\|G_{1}(s/\vare,X_s^\varepsilon)-G_{1}(s/\vare,\bar{X}^{\varepsilon}_s)
\|_{L_{2}(U_1, H_1)}^2ds\nonumber\\
&
+2\int_0^t\left\langle F(s/\vare,X_{s}^\varepsilon,Y_s^\varepsilon)
-\bar{F}(s/\vare,\bar{X}^{\varepsilon}_s), X_{s}^{\vare}-\bar{X}^{\varepsilon}_{s}\right\rangle_{H_1} ds\nonumber\\
&
+2\int_0^t\langle X_{s}^{\vare}-\bar{X}^{\varepsilon}_{s}, [G_{1}(s/\vare,X_s^\varepsilon)-G_{1}(s/\vare,\bar{X}^{\varepsilon}_s)]dW_s^{1}\rangle_{H_1}\nonumber\\
\leq&
C\int_0^t\left(1+\rho\left(\tilde{X}_s^\varepsilon\right)
+\eta\left(\tilde{\bar{X}}^{\varepsilon}_s\right)\right) \left\|X_{s}^{\vare}-\bar{X}^{\varepsilon}_{s}\right\|_{H_1}^2 ds+\sI^{\vare}(t)+\cM^{\vare}(t),
\end{align}
where
$$
\sI^{\vare}(t):=2\int_0^t\left\langle F(s/\vare,X_{s}^\varepsilon,Y_s^\varepsilon)
-\bar{F}(s/\vare,\bar{X}^{\varepsilon}_s), X_{s}^{\vare}-\bar{X}^{\varepsilon}_{s}\right\rangle_{H_1} ds,
$$
$$
\cM^{\vare}(t):=2\int_0^t\langle X_{s}^{\vare}-\bar{X}^{\varepsilon}_{s}, [G_{1}(s/\vare,X_s^\varepsilon)-G_{1}(s/\vare,\bar{X}^{\varepsilon}_s)]dW_s^{1}\rangle_{H_1}.
$$
It follows from \ref{A5}, \eqref{0528:03} and H\"older's inequality that
\begin{align*}
\sI^{\vare}(t)
=&
2\int_0^t\left\langle\bar{F}(s/\vare,X_{s}^\varepsilon)
-\bar{F}(s/\vare,\bar{X}^{\varepsilon}_s), X_{s}^{\vare}-\bar{X}^{\varepsilon}_{s}\right\rangle_{H_1} ds\nonumber\\
&
+2\int_0^t\left[\langle F(s/\vare,X_{s}^\varepsilon,Y_s^\varepsilon)-F(s/\vare,X_{s(\delta)}^\varepsilon,\hat{Y}_s^\varepsilon)
, X_{s}^{\vare}-\bar{X}^{\varepsilon}_{s}\rangle_{H_1}\right.\\
&\quad\quad\quad
+\left.\langle \bar{F}(s/\vare,X^{\vare}_{s(\delta)})-\bar{F}(s/\vare,X^{\vare}_s), X_{s}^{\vare}-\bar{X}^{\varepsilon}_{s}\rangle_{H_1}\right]ds\nonumber\\
&
+2\int_0^t\left\langle F(s/\vare,X_{s(\delta)}^\varepsilon,\hat{Y}_s^\varepsilon)
-\bar{F}(s/\vare,X^{\vare}_{s(\delta)}), X_{s}^{\vare}-X_{s(\delta)}^{\vare}-\bar{X}^{\varepsilon}_{s}+\bar{X}^{\varepsilon}_{s(\delta)}\right\rangle_{H_1} ds\nonumber\\
&
+2\int_0^t\left\langle F(s/\vare,X_{s(\delta)}^\varepsilon,\hat{Y}_s^\varepsilon)-\bar{F}(s/\vare,X^{\vare}_{s(\delta)}), X_{s(\delta)}^{\vare}-\bar{X}^{\varepsilon}_{s(\delta)}\right\rangle_{H_1} ds\nonumber\\
\leq&
C\int_0^t\left(\left\|X_{s}^{\vare}-\bar{X}^{\varepsilon}_{s}\right\|_{H_1}
+\left\|X_{s}^{\vare}-X_{s(\delta)}^{\vare}\right\|_{H_1}
+\left\|Y_{s}^{\vare}-\hat{Y}_{s}^{\vare}\right\|_{H_2}\right)
\left\|X_{s}^{\vare}-\bar{X}^{\varepsilon}_{s}\right\|_{H_1}ds\nonumber\\
&
+C\int_0^t\left(1+\left\|X_{s(\delta)}^{\vare}\right\|_{H_1}
+\left\|\hat{Y}_{s}^{\vare}\right\|_{H_2}\right)
\left(\left\|X_{s}^{\vare}-X_{s(\delta)}^{\vare}\right\|_{H_1}
+\left\|\bar{X}_{s}^{\vare}-\bar{X}_{s(\delta)}^{\vare}\right\|_{H_1}\right)ds
+2\sI^{\vare}_1(t)\nonumber\\
\leq&
C\int_0^t\left(\left\|X_{s}^{\vare}-\bar{X}^{\varepsilon}_{s}\right\|_{H_1}^2
+\left\|X_{s}^{\vare}-X_{s(\delta)}^{\vare}\right\|_{H_1}^2
+\left\|Y_{s}^{\vare}-\hat{Y}_{s}^{\vare}\right\|_{H_2}^2\right)ds
+2\sI^{\vare}_1(t)\nonumber\\
&
+C\left[\int_0^T\left(1+\left\|X_{s(\delta)}^{\vare}\right\|_{H_1}^2
+\left\|\hat{Y}_{s}^{\vare}\right\|_{H_2}^2\right)ds\right]^{\frac{1}{2}}
\left[\int_0^T\left\|X_{s}^{\vare}-X_{s(\delta)}^{\vare}\right\|_{H_1}^2
+\left\|\bar{X}_{s}^{\vare}-\bar{X}_{s(\delta)}^{\vare}\right\|_{H_1}^2ds\right]^{\frac{1}{2}},
\end{align*}
where
$$
\sI^{\vare}_1(t):=\int_0^t\left\langle F(s/\vare,X_{s(\delta)}^\varepsilon,\hat{Y}_s^\varepsilon)
-\bar{F}(s/\vare,X^{\vare}_{s(\delta)}), X_{s(\delta)}^{\vare}-\bar{X}^{\varepsilon}_{s(\delta)}\right\rangle_{H_1} ds.
$$
Then \eqref{F3.1}, \eqref{3.13a}, \eqref{barXT}, \eqref{3.14}, \eqref{F3.7} and
H\"older's inequality yield that for any $t\in[0,T]$,
\begin{align}\label{0528:05}
\EE\left(\sup_{s\in[0, t\wedge{\tau^{\varepsilon}_R}]}\left|\sI^{\vare}(t)\right|\right)
\leq&
 C\int_0^t\EE\left(\sup_{r\in[0, s\wedge{\tau^{\varepsilon}_R}]}
\left\|X_{r}^{\vare}-\bar{X}^{\varepsilon}_{r}\right\|_{H_1}^2\right)ds\\\nonumber
&
+C_{T}(1+\|x\|^m_{H_1}+\|y\|^m_{H_2})\delta^{\frac{1}{4}}
+C\EE\left(\sup_{s\in[0, t\wedge{\tau^{\varepsilon}_R}]}\left|\sI^{\vare}_1(s)\right|\right),
\end{align}
where $m>0$.

Note that $\left\{\cM(t)\right\}_{0\leq s\leq T}$ is a local martingale,
thus applying Burkholder-Davis inequality (see e.g. \cite[Proposition D.0.1]{LR2015}),
Young's inequality and \ref{A5}, we have
\begin{align}\label{0528:06}
\EE\left(\sup_{s\in[0,t\wedge{\tau^{\varepsilon}_R}]}
\left|\cM^{\vare}(s)\right|\right)
\leq
&\frac{1}{2}\mathbb{E}\left(\sup_{s\in[0, t\wedge{\tau^{\varepsilon}_R}]}
\|X_{s}^{\vare}-\bar{X}^{\varepsilon}_{s}\|^2_{H_1}\right)\\\nonumber
&
+C_{R,T}\int_0^{t}
\mathbb{E}\left(\sup_{r\in[0,s\wedge{\tau^{\varepsilon}_R}]}
\|X_{r}^{\vare}-\bar{X}^{\varepsilon}_{r}\|^2_{H_1}\right)ds.
\end{align}
Combining \eqref{0528:04}, \eqref{0528:05}, \eqref{0528:06} and the definition of
$\tau_R^\vare$, we have
\begin{align*}
\EE\left(\sup_{s\in[0, t\wedge{\tau^{\varepsilon}_R}]}
\|X_{s}^{\vare}-\bar{X}^{\varepsilon}_{s}\|^2_{H_1}\right)
\leq&
C_{R,T}\int_0^{t}
\mathbb{E}\left(\sup_{r\in[0,s\wedge{\tau^{\varepsilon}_R}]}
\|X_{r}^{\vare}-\bar{X}^{\varepsilon}_{r}\|^2_{H_1}\right)ds\\\nonumber
&
+C_{T}(1+\|x\|^m_{H_1}+\|y\|^m_{H_2})\delta^{\frac{1}{4}}
+C\EE\left(\sup_{s\in[0, t\wedge{\tau^{\varepsilon}_R}]}\left|\sI^{\vare}_1(t)\right|\right),
\end{align*}
which along with Gronwall's inequality implies that

\begin{align}\label{F4.2}
\mathbb{E}\left(\sup_{t\in[0, T\wedge{\tau^{\varepsilon}_R}]}
\|X_{t}^{\vare}-\bar{X}^{\varepsilon}_{t}\|^2_{H_1}\right)
\leq
C_{R,T}(1+\|x\|^m_{H_1}+\|y\|^m_{H_2})\delta^{\frac{1}{4}}
+C_{R,T}\EE\left(\sup_{t\in[0, T\wedge{\tau^{\varepsilon}_R}]}\left|\sI^{\vare}_1(t)\right|\right).
\end{align}

\vspace{2mm}
\textbf{Step 2.} In this step, we mainly estimate the last term in the above inequality. Note that
\begin{align}\label{J}
\left|\sJ^{\vare}_1(t)\right|
=&
\left|\sum_{k=0}^{[t/\delta]-1}
\int_{k\delta}^{(k+1)\delta}\langle F(s/\vare,X_{s(\delta)}^{\vare},\hat{Y}_{s}^{\vare})-\bar{F}(s/\vare,X^{\vare}_{s(\delta)}), X_{s(\delta)}^{\vare}-\bar{X}^{\varepsilon}_{s(\delta)}\rangle_{H_1} ds\right.\nonumber\\
&
\left.+\int_{t(\delta)}^{t}\langle F(s/\vare,X_{s(\delta)}^{\vare},\hat{Y}_{s}^{\vare})-\bar{F}(s/\vare,X^{\vare}_{s(\delta)}), X_{s(\delta)}^{\vare}-\bar{X}^{\varepsilon}_{s(\delta)}\rangle_{H_1} ds\right|\nonumber\\
\leq&
\sum_{k=0}^{[t/\delta]-1}
\left|\int_{k\delta}^{(k+1)\delta}\langle F(s/\vare,X_{s(\delta)}^{\vare},\hat{Y}_{s}^{\vare})-\bar{F}(s/\vare,X^{\vare}_{s(\delta)}), X_{s(\delta)}^{\vare}-\bar{X}^{\varepsilon}_{s(\delta)}\rangle_{H_1} ds\right|\nonumber\\
&
+\left|\int_{t(\delta)}^{t}\langle F(s/\vare,X_{s(\delta)}^{\vare},\hat{Y}_{s}^{\vare})-\bar{F}(s/\vare,X^{\vare}_{s(\delta)}), X_{s(\delta)}^{\vare}-\bar{X}^{\varepsilon}_{s(\delta)}\rangle_{H_1} ds\right|\nonumber\\
=:&
\sJ^{\vare}_{11}(t)+\sJ^{\vare}_{12}(t).
\end{align}

For the term $\sJ^{\vare}_{12}(t)$, by H\"older's inequality, \ref{A5}, \eqref{F3.1}, \eqref{3.13a} and \eqref{0318:01},
it is easy to see
\begin{align}
\EE\left[\sup_{t\in [0, T]}\sJ^{\vare}_{12}(t)\right]
\leq &
C\left[\EE\sup_{t\in [0, T]}\|X^{\vare}_t-\bar{X}^{\varepsilon}_{t}\|^2_{H_1}\right]^{1/2}
\left[\EE\sup_{t\in[0,T]}\left|\int_{t(\delta)}^{t}(1+\|X^{\vare}_{s(\delta)}\|_{H_1}
+\|\hat{Y}_{s}^{\vare}\|_{H_2})ds\right|^2\right]^{1/2}\nonumber\\
\leq&
C\left[\EE\sup_{t\in [0, T]}\|X^{\vare}_t-\bar{X}^{\varepsilon}_{t}\|^2_{H_1}\right]^{1/2}
\left[\EE\int_{0}^{T}(1+\|X^{\vare}_{s(\delta)}\|^2_{H_1}
+\|\hat{Y}_{s}^{\vare}\|^2_{H_2})ds\right]^{1/2}\delta^{1/2}\nonumber\\
\leq&
C_{T}\left(1+\|x\|^{2}_{H_1}+\|y\|^{2}_{H_2}\right)\delta^{1/2}.\label{J2}
\end{align}

For the term $\sJ^{\vare}_{11}(t)$, employing H\"older's inequality, \eqref{F3.1} and \eqref{0318:01}
we have
\begin{align*}
\mathbb{E}\left[\sup_{t\in[0, T]}\sJ^{\vare}_{11}(t)\right]
\leq&
\sum_{k=0}^{[T/\delta]-1}\mathbb{E}\left|\int_{k\delta}^{(k+1)\delta}
\langle F(s/\vare,X_{k\delta}^{\vare},\hat{Y}_{s}^{\vare})-\bar{F}(s/\vare,X_{k\delta}^{\vare}), X_{k\delta}^{\vare}-\bar{X}^{\varepsilon}_{k\delta}\rangle_{H_1} ds\right|\nonumber\\
\leq&
\sum_{k=0}^{[T/\delta]-1}\left[\mathbb{E}\|X^{\vare}_{k\delta}-\bar X^{\vare}_{k\delta}\|^2_{H_1}\right]^{1/2}\\
&\quad\quad\quad
\times\left[\EE\left\|\int_{k\delta}^{(k+1)\delta}
F(s/\vare,X_{k\delta}^{\vare},\hat{Y}_{s}^{\vare})-\bar{F}(s/\vare,X_{k\delta}^{\vare})ds\right\| ^2_{H_1}\right]^{1/2}\nonumber\\
\leq&
C_{T}\left(1+\|x\|_{H_1}+\|y\|_{H_2}\right)\sum_{k=0}^{[T/\delta]-1}\left[\int_{k\delta}^{(k+1)\delta}
\int_{r}^{(k+1)\delta}\Psi^{\varepsilon}_{k}(s,r)dsdr\right]^{1/2},
\end{align*}
where for any $k\delta\leq r\leq s\leq (k+1)\delta$,
\begin{align*}
\Psi^{\varepsilon}_{k}(s,r):=\mathbb{E}
\langle F(s/\vare,X_{k\delta}^{\vare},\hat{Y}_{s}^{\vare})-\bar{F}(s/\vare,X_{k\delta}^{\vare}),
F(r/\vare,X_{k\delta}^{\vare},\hat{Y}_{r}^{\vare})-\bar{F}(r/\vare,X_{k\delta}^{\vare})\rangle_{H_1},
\end{align*}
and we will give the following estimate in \textbf{Step 3}:
\begin{equation}\label{F4.5}
\Psi^{\varepsilon}_{k}(s,r)\leq
C_{T}\left(1+\|x\|^{2}_{H_1}+\|y\|^{2}_{H_2}\right){\rm e}^{-\frac{\gamma(s-r)}{2\varepsilon}}.
\end{equation}
Then we have
\begin{align}\label{0224:08}
\mathbb{E}\left[\sup_{t\in[0, T]}\sJ^{\vare}_{11}(t)\right]
&
\leq C_T\left(1+\|x\|_{H_1}+\|y\|_{H_2}\right)\sum_{k=0}^{[T/\delta]-1}
\left[\int_{k\delta}^{(k+1)\delta}\int_{r}^{(k+1)\delta}
\Psi^{\varepsilon}_{k}(s,r)dsdr\right]^{1/2}\nonumber\\
&
\leq  C_{T}\delta^{-1}\left(1+\|x\|^2_{H_1}+\|y\|^2_{H_2}\right)
\max_{0\leq k\leq [T/\delta]-1}
\left(\int_{k\delta}^{(k+1)\delta}\int_{r}^{(k+1)\delta}
{\rm e}^{-\frac{\gamma(s-r)}{2\varepsilon}} ds dr\right)^{1/2}\nonumber\\
&\leq C_{T}\left(1+\|x\|^2_{H_1}+\|y\|^2_{H_2}\right)\left(\varepsilon/\delta\right)^{1/2}.
\end{align}
Hence, \eqref{0528:04}, \eqref{J}, \eqref{J2} and \eqref{0224:08} yield that
\begin{align*}
\EE\left(\sup_{t\in[0, T\wedge{\tau^{\varepsilon}_R}]}\left|\sI^{\vare}_1(t)\right|\right)
\leq C_{T}\left(1+\|x\|^2_{H_1}+\|y\|^2_{H_2}\right)
\left[\left(\varepsilon/\delta\right)^{1/2}+\delta^{1/2}\right].
\end{align*}
This together with \eref{F4.2}, and taking $\delta=\varepsilon^\frac{2}{3}$,
it is easy to see \eref{CBS} holds.

\vspace{2mm}
\textbf{Step 3.} In this step, we intend to prove \eref{F4.5}. For any $s>0$, and any $\mathscr{F}_s$-measurable $H_1$-valued random variable $X$ and $H_2$-valued random variable $Y$, we consider the following equation:
\begin{eqnarray*}
\left\{ \begin{aligned}
&dY_{t}=\frac{1}{\vare}B(t,X,Y_{t})dt+\frac{1}{\sqrt{\vare}}G_2(t,X,Y_t)dW_{t}^{2},\quad t\geq s,\\
&Y_{s}=Y,
\end{aligned} \right.
\end{eqnarray*}
which has a unique solution $\tilde{Y}^{\vare,s,X,Y}_t$. Then by the construction of $\hat{Y}_{t}^{\vare}$,
for any $k\in \mathbb{N}$ and $t\in[k\delta,(k+1)\delta]$ we have $\PP$-a.s.,
$$
\hat{Y}_{t}^{\vare}=\tilde Y^{\vare,k\delta,X_{k\delta}^{\vare},\hat{Y}_{k\delta}^{\vare}}_t,
$$
which implies
\begin{align*}
\Psi^{\varepsilon}_{k}(s,r)
=&
\mathbb{E}
\big\langle F(s/\vare,X_{k\delta}^{\vare},\tilde {Y}^{\vare, k\delta,X_{k\delta}^{\vare},
\hat Y_{k\delta}^{\vare}}_{s})
-\bar{F}(s/\vare,X_{k\delta}^{\vare}),
F(r/\vare,X_{k\delta}^{\vare},\tilde{Y}^{\vare, k\delta,X_{k\delta}^{\vare}, \hat Y_{k\delta}^{\vare}}_{r})-\bar{F}(r/\vare,X_{k\delta}^{\vare})\big\rangle_{H_1}\\
=&
\int_{\Omega}\mathbb{E}\Big[
\big\langle F(s/\vare,X_{k\delta}^{\vare},\tilde {Y}^{\vare, k\delta,X_{k\delta}^{\vare}, \hat Y_{k\delta}^{\vare}}_{s})-\bar{F}(s/\vare,X_{k\delta}^{\vare}), \\
&\quad\quad\quad
F(r/\vare,X_{k\delta}^{\vare},\tilde{Y}^{\vare, k\delta,X_{k\delta}^{\vare}, \hat Y_{k\delta}^{\vare}}_{r})-\bar{F}(r/\vare,X_{k\delta}^{\vare})\big\rangle_{H_1}| \mathscr{F}_{k\delta}\Big](\omega)\PP(d \omega)\\
=&
\int_{\Omega}
\mathbb{E}\big\langle F(s/\vare,X_{k\delta}^{\vare}(\omega),\tilde {Y}^{\vare, k\delta,X_{k\delta}^{\vare}(\omega), \hat Y_{k\delta}^{\vare}(\omega)}_{s})-\bar{F}(s/\vare,X_{k\delta}^{\vare}(\omega)), \\
&\quad\quad\quad
F(r/\vare,X_{k\delta}^{\vare}(\omega),\tilde{Y}^{\vare, k\delta,X_{k\delta}^{\vare}(\omega), \hat Y_{k\delta}^{\vare}(\omega)}_{r})-\bar{F}(r/\vare,X_{k\delta}^{\vare}(\omega))\big\rangle_{H_1}\PP(d \omega),
\end{align*}
where the last equality comes from the fact that $X_{k\delta}^{\vare}$ and $\hat Y_{k\delta}^{\vare}$ are $\mathscr{F}_{k\delta}$-measurable, and for any fixed $(x,y)\in H_1\times H_2$, $\{\tilde Y^{\vare, k\delta,x,y}_{s}\}_{s\geq k\delta}$ is independent of $\mathscr{F}_{k\delta}$.

By the definition of process $\tilde{Y}^{\vare,k\delta,x,y}_t$, for its $dt\otimes \PP$-equivalence class $\check{\tilde{Y}}^{\vare,k\delta,x,y}$ we have $\check{\tilde{Y}}^{\vare,k\delta,x,y}\in L^{\kappa}([k\delta, T]\times \Omega, dt\otimes \PP; V_2)\cap L^2([k\delta, T]\times\Omega, dt\otimes \PP; H_2)$ with $\kappa$ as in \ref{B3} and $\PP$-a.s.
\begin{align}\label{E3.12.1}
\tilde{Y}^{\vare,k\delta,x,y}_{s\vare}=
&
y+\frac{1}{\vare}\int^{s\vare}_{k\delta} B(r/\vare,x,\tilde{\tilde{Y}}^{\vare,k\delta,x,y}_r)dr+\frac{1}{\sqrt{\vare}}\int^{s\vare}_{k\delta} G_2(r/\vare,x,\tilde{\tilde{Y}}^{\vare,k\delta,x,y}_r)dW^2_r\\
=&
y+\int^{s}_{k\delta/\vare} B(r,x,\tilde{\tilde{Y}}^{\vare,k\delta,x,y}_{r\vare})dr+\int^{s}_{k\delta/\vare} G_2(r,x,\tilde{Y}^{\vare,k\delta,x,y}_{r\vare})d\hat{W}^{2}_r,\quad s\in [k\delta/\vare,(k+1)\delta/\vare],\nonumber
\end{align}
where $\tilde{\tilde{Y}}^{\vare,k\delta,x,y}$ is any $V_2$-valued progressively measurable
$dt\otimes \PP$-version of $\check{\tilde{Y}}^{\vare,k\delta,x,y}$, and
$\{\hat W^{2}_t:=\frac{1}{\sqrt{\vare}}W^2_{t\vare}
\}_{t\in[k\delta/\varepsilon,(k+1)\delta/\varepsilon]}$.

The uniqueness of the solution of equations (\ref{E3.12.1}) and (\ref{SFE}) implies
that the distribution of $\{\tilde Y^{\vare, k\delta, x,y}_{s\vare}\}_{s\in [k\delta/\vare,(k+1)\delta/\vare]}$
coincides with the distribution of $\{Y_{s}^{k\delta/\vare,x, y}\}_{s\in [k\delta/\vare,(k+1)\delta/\vare]}$.
Then by \ref{A5}, \eref{WErodicity2} and Markov property, we have
\begin{align}\label{0224:07}
&
\mathbb{E}\big\langle F(s/\vare,X_{k\delta}^{\vare}(\omega),\tilde {Y}^{\vare, k\delta,X_{k\delta}^{\vare}(\omega), \hat Y_{k\delta}^{\vare}(\omega)}_{s})
-\bar{F}(s/\vare,X_{k\delta}^{\vare}(\omega)), \nonumber\\
&\quad\quad\quad
F(r/\vare,X_{k\delta}^{\vare}(\omega),\tilde{Y}^{\vare, k\delta,X_{k\delta}^{\vare}(\omega), \hat Y_{k\delta}^{\vare}(\omega)}_{r})
-\bar{F}(r/\vare,X_{k\delta}^{\vare}(\omega))\big\rangle_{H_1}\nonumber\\
&=
\EE\big\langle F(s/\vare,X_{k\delta}^{\vare}(\omega),Y_{s/\varepsilon}^{k\delta/\varepsilon,X_{k\delta}^\varepsilon(\omega),
\hat{Y}_{k\delta}^\varepsilon(\omega)})-\bar{F}(s/\vare,X_{k\delta}^{\vare}(\omega)), \nonumber\\
&\quad\quad\quad
F(r/\vare,X_{k\delta}^{\vare}(\omega),Y_{r/\varepsilon}^{k\delta/\varepsilon,X_{k\delta}^\varepsilon(\omega),
\hat{Y}_{k\delta}^\varepsilon(\omega)})-\bar{F}(r/\vare,X_{k\delta}^{\vare}(\omega))\big\rangle_{H_1}\nonumber\\
&
=\int_{\Omega}\big\langle \EE \Big[F(s/\vare,X_{k\delta}^{\vare}(\omega),
Y^{r/\vare,X_{k\delta}^{\vare}(\omega),Y_{r/\vare}^{k\delta/\vare,X_{k\delta}^{\vare}(\omega),
\hat Y_{k\delta}^{\vare}(\omega)}(\tilde\omega)}_{s/\vare})
-\bar{F}(s/\vare, X_{k\delta}^{\vare}(\omega))\Big],\nonumber\\
&\quad\quad\quad
F(r/\varepsilon,X_{k\delta}^\varepsilon(\omega),Y_{r/\varepsilon}^{k\delta/\varepsilon,X_{k\delta}^\varepsilon(\omega),
\hat{Y}_{k\delta}^\varepsilon(\omega)}(\tilde\omega))
-\bar{F}(r/\vare,X_{k\delta}^\varepsilon(\omega))
\big\rangle_{H_1}\PP(d\tilde \omega)\nonumber\\
&
\leq C\int_{\Omega}\left(1+\|X_{k\delta}^\vare(\omega)\|_{H_1}^2
+\left\|Y_{r/\vare}^{k\delta/\vare,X_{k\delta}^{\vare}(\omega),
\hat Y_{k\delta}^{\vare}(\omega)}(\tilde\omega)\right\|_{H_2}^2\right)
{\rm e}^{-\frac{\gamma(s-r)}{2\varepsilon}}\PP(d\tilde\omega),
\end{align}
which along with \eref{0224:05}, \eref{F3.1} and \eref{3.13a} implies that
\begin{align*}
\Psi_{k}^\vare(s,r)
\leq&
C_T\int_{\Omega}\left[(1+\|X^{\vare}_{k\delta}(\omega)\|^{2}_{H_1}+\|\hat Y_{k\delta}^{\vare}(\omega)\|^{2}_{H_2})\right]\PP(d\omega){\rm e}^{-\frac{\gamma(s-r)}{2\varepsilon}}\\
\leq&
C_{T}\left(1+\|x\|^{2}_{H_1}+\|y\|^{2}_{H_2}\right){\rm e}^{-\frac{\gamma(s-r)}{2\varepsilon}},
\end{align*}
which gives  estimate \eref{F4.5}. The proof is complete.
\end{proof}

\vspace{0.1cm}
Now, we are in a position to prove our first main result.

\noindent\textbf{Proof of Theorem \ref{main result 1}:}
Applying Chebyshev's inequality, Lemmas \ref{PMY} and \ref{L3.8}, we have
\begin{align}
&
\mathbb{E}\left(\sup_{t\in [0, T]}\|X_{t}^{\vare}-\bar{X}^{\varepsilon}_{t}\|^2_{H_1} 1_{\{T>\tau_{R}^\vare\}}\right)\nonumber\\
&\leq
\left[\mathbb{E}\left(\sup_{t\in [0, T]}
\|X_{t}^{\vare}-\bar{X}^{\varepsilon}_{t}\|^{4}_{H_1}\right)\right]^{1/2}
\cdot\left[\mathbb{P}\left(T>\tau_{R}^\vare\right)\right]^{1/2} \nonumber\\
&\leq
\frac{C_{T}\left(1+\|x\|^{2}_{H_1}+\|y\|^{2}_{H_2}\right)}{\sqrt{R}}
\left[\mathbb{E}\int_0^T(1+\|\tilde{X}^{\varepsilon}_s\|_{V_1}^{\alpha})
(1+\|X^{\varepsilon}_s\|_{H_1}^{\beta})ds\right]^{1/2}\nonumber\\
&\quad+
\frac{C_{T}\left(1+\|x\|^{2}_{H_1}+\|y\|^{2}_{H_2}\right)}{\sqrt{R}}
\left[\mathbb{E}\int_0^T(1+\|\tilde{\bar{X}}^{\varepsilon}_s\|_{V_1}^{\alpha})
(1+\|\bar{X}^{\varepsilon}_s\|_{H_1}^{\beta})ds\right]^{1/2}\nonumber\\
&\leq
\frac{C_{T}\left(1+\|x\|^{m}_{H_1}+\|y\|^{m}_{H_2}\right)}{\sqrt{R}}, \label{CAS}
\end{align}
for some $m>0$. Then estimates \eref{CBS} and \eref{CAS} yield
\begin{align*}
&
\mathbb{E}\left(\sup_{t\in [0, T]}
\|X_{t}^{\vare}-\bar{X}^{\varepsilon}_{t}\|^2_{H_1}\right)\nonumber\\
&
\leq\mathbb{E}\left(\sup_{t\in [0, T]}\|X^{\vare}_{t}-\bar{X}^{\varepsilon}_{t}\|^2_{H_1}
1_{\{T\leq \tau_{R}^\vare\}}\right)
+\mathbb{E}\left(\sup_{t\in [0, T]}\|X_{t}^{\vare}-\bar{X}^{\varepsilon}_{t}\|^2_{H_1} 1_{\{T>\tau_{R}^\varepsilon\}}\right)\nonumber\\
&
\leq C_{R,T}\left(1+\|x\|^m_{H_1}+\|y\|^m_{H_2}\right)\varepsilon^{\frac{1}{6}}
+\frac{C_{T}\left(1+\|x\|^m_{H_1}+\|y\|^m_{H_2}\right)}{\sqrt{R}}.
\end{align*}
Now, letting $\vare\rightarrow 0$ first, then taking $R\rightarrow \infty$, we have
\begin{align}
\lim_{\vare\rightarrow 0}\mathbb{E}\left(\sup_{t\in [0, T]}\|X_{t}^{\vare}-\bar{X}^{\varepsilon}_{t}\|^2_{H_1}\right)=0.\label{L2}
\end{align}

Note that for any $p>1$, by Lemmas \ref{PMY} and \ref{L3.8}, it is easy to see that
\begin{align*}
\EE\left(\sup_{t\in [0, T]}\|X_{t}^{\vare}-\bar{X}^{\varepsilon}_{t}\|^{2p}_{H_1}\right)
\leq&
\left[\EE\left(\sup_{t\in [0, T]}
\|X_{t}^{\vare}-\bar{X}^{\varepsilon}_{t}\|^{4p-2}_{H_1}\right)\right]^{1/2}\left[\EE\left(\sup_{t\in [0, T]}
\|X_{t}^{\vare}-\bar{X}^{\varepsilon}_{t}\|^{2}_{H_1}\right)\right]^{1/2}\\
\leq&
C_{p, T}\left(1+\|x\|^{2p-1}_{H_1}+\|y\|^{2p-1}_{H_2}\right)\left[\EE\left(\sup_{t\in [0, T]}
\|X_{t}^{\vare}-\bar{X}^{\varepsilon}_{t}\|^{2}_{H_1}\right)\right]^{1/2},
\end{align*}
which along with \eref{L2} completes the proof.
\hspace{\fill}$\Box$

\subsection{Proof of second main result}

Denote by $\cP(H_2)$ the space of all Borel probability measures on $H_2$.
Then $\cP(H_2)$ is a Polish space with the following distance
$$
d_{BL}(\mu,\nu):=\sup\left\{\left|\int_{H_2}\varphi(x_2)\mu(dx_2)-\int_{H_2}\varphi(x_2)\nu(dx_2)\right|:
~\|\varphi\|_{BL}\leq1\right\},\quad \mu,\nu\in\cP(H_2),
$$
where $\|\varphi\|_{BL}:=Lip(\varphi)+|\varphi|_\infty$.
Now we show that the mapping
$$
\RR\times H_1\longrightarrow \cP(H_2),\quad (t,x)\longmapsto \mu_t^{x}
$$
is uniformly almost periodic in the following lemma,
which is critical to construct the averaged coefficient $\bar{F}(\cdot)$.
\begin{lemma}\label{Le5.2}
Suppose that conditions \ref{B1}-\ref{B4} and \ref{C1}-\ref{C3} hold.
Then the evolution system of measures $\{\mu^x_t\}_{t\in\RR}$ given in Proposition $\ref{Ergodicity}$ is uniformly almost periodic.
\end{lemma}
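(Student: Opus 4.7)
My plan is to invoke Bochner's characterization: to prove $(t,x)\mapsto \mu_t^x$ is uniformly almost periodic in $t$, it suffices to show that from every sequence $\{t_n'\}\subset\RR$ one can extract a subsequence $\{t_n\}$ such that $\mu_{t+t_n}^x$ converges in the bounded--Lipschitz distance $d_{BL}$ uniformly over $(t,x)\in\RR\times Q$ for every bounded $Q\subset H_1$. First, using condition \ref{C3}, which asserts the uniform almost periodicity of $B$ and $G_2$, I would apply Bochner's criterion to these coefficients together with a standard diagonal argument over bounded sets to extract a subsequence $\{t_n\}$ and limit functions $B'$, $G_2'$ with
\begin{equation*}
\sup_{t\in\RR,(x,y)\in Q_1\times Q_2}\left(\|B(t+t_n,x,y)-B'(t,x,y)\|_{V_2^*}+\|G_2(t+t_n,x,y)-G_2'(t,x,y)\|_{L_2(U_2,H_2)}\right)\longrightarrow 0
\end{equation*}
for all bounded $Q_1\subset H_1$ and $Q_2\subset V_2$ (resp.\ $H_2$). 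Since \ref{B1}--\ref{B4} are assumed uniformly in $t\in\RR$ with the same constants, $B'$ and $G_2'$ inherit these bounds, and Proposition \ref{Ergodicity} together with Corollary \ref{cor0528} applied to the limit frozen equation $dY_t=B'(t,x,Y_t)dt+G_2'(t,x,Y_t)d\bar W_t^2$ produces a unique evolution system of measures $\{\nu_t^x\}_{t\in\RR}$ satisfying the same exponential ergodicity \eqref{WErodicity2} with the same $\gamma$ and $C$.

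Next I would show $\sup_{t\in\RR,x\in Q}d_{BL}(\mu_{t+t_n}^x,\nu_t^x)\to 0$. By the uniqueness in Corollary \ref{cor0528} combined with the translation invariance in law of $\bar W^2$, the mapping $t\mapsto \mu_{t+t_n}^x$ is precisely the evolution system of measures for the time--shifted frozen equation with coefficients $B(\cdot+t_n,x,\cdot)$ and $G_2(\cdot+t_n,x,\cdot)$. Denote by $P^{x,n}_{s,t}$ and $P^{x,*}_{s,t}$ the corresponding evolution operators and fix $\varphi$ with $\|\varphi\|_{BL}\leq 1$. For any $T>0$ I split
\begin{equation*}
\left|\int_{H_2}\varphi\,d\mu_{t+t_n}^x-\int_{H_2}\varphi\,d\nu_t^x\right|\leq \mathrm{(I)}_n+\mathrm{(II)}+\mathrm{(III)}_n(T),
\end{equation*}
where $\mathrm{(I)}_n$ and $\mathrm{(II)}$ measure the distance from $P^{x,n}_{t-T,t}\varphi(0)$ (resp.\ $P^{x,*}_{t-T,t}\varphi(0)$) to the corresponding integrals against $\mu_{t+t_n}^x$ (resp.\ $\nu_t^x$). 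By \eqref{WErodicity2}, both are dominated by $C(1+\|x\|_{H_1})\mathrm{e}^{-\gamma T/2}$ uniformly in $n$ and $t$, hence can be made $<\epsilon/3$ by choosing $T=T(\epsilon,Q)$ large. For the remaining finite--time term $\mathrm{(III)}_n(T)=|P^{x,n}_{t-T,t}\varphi(0)-P^{x,*}_{t-T,t}\varphi(0)|$ I would apply It\^o's formula to the squared $H_2$--norm of the difference of the two solutions started at $0$ at time $t-T$, invoke the strong monotonicity \ref{B2}, Young's inequality and Gronwall's lemma to obtain an $L^2$ estimate controlled by the sup norms of $B(\cdot+t_n,\cdot,\cdot)-B'$ and $G_2(\cdot+t_n,\cdot,\cdot)-G_2'$ over $\RR\times Q\times Q'$, where $Q'$ is a bounded set produced by the uniform--in--$(s,n)$ moment bounds on the limiting solution supplied by the coercivity \ref{B3}. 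A stopping--time/Lyapunov argument based on the equivalent norms $\|\cdot\|_n$ of condition \ref{C2} is used to absorb the $V_2$--valued nonlinearity when $\tilde Y^*$ is not controlled in $V_2$ pathwise. Letting $n\to\infty$ then $\epsilon\to 0$ concludes Step 3.

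The hard part is keeping Step 3 uniform in the time origin $t\in\RR$. Pathwise finite--time continuous dependence on coefficients is routine, but turning it into a bound uniform in $t$ requires three ingredients working together: the uniform--in--$t$ convergence of the shifted coefficients from Bochner's criterion applied to \ref{C3}; uniform--in--$t$ moment estimates on both shifted and limiting frozen solutions from \ref{B3}; and the Lyapunov inequality of condition \ref{C2}, which both provides the tightness of $\{\mu_t^x:t\in\RR,x\in Q\}$ needed so that $d_{BL}$ convergence is achieved, and allows one to close the Gronwall estimate using the sequence of equivalent inner products $\langle\cdot,\cdot\rangle_n$. Once this uniform finite--time comparison is in place, the exponential ergodicity does all the rest and Bochner's criterion yields the claimed uniform almost periodicity of $\{\mu_t^x\}_{t\in\RR}$.
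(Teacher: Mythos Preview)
Your overall architecture---Bochner's criterion, exponential mixing to reduce to a finite window, and a continuous--dependence estimate on that window---is the same as the paper's, and your splitting $\mathrm{(I)}+\mathrm{(II)}+\mathrm{(III)}$ is a perfectly good replacement for the paper's direct comparison of the entire solutions $Y^x_{n,t}$ and $\widetilde Y^x_t$.

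There is, however, a genuine gap in your Step~3. In the It\^o computation for $\|Y^n_s-Y^*_s\|_{H_2}^2$ you will, after invoking \ref{B2}, be left with the cross term
\[
2\,{}_{V_2^*}\!\bigl\langle B(s{+}t_n,x,\tilde Y^*_s)-B'(s,x,\tilde Y^*_s),\,\tilde Y^n_s-\tilde Y^*_s\bigr\rangle_{V_2}.
\]
You measure $B(\cdot+t_n,\cdot,\cdot)-B'$ in $V_2^*$; then Young's inequality produces $\|Y^n_s-Y^*_s\|_{V_2}^2$, which the dissipation $-\gamma\|Y^n_s-Y^*_s\|_{H_2}^2$ from \ref{B2} cannot absorb, and your Gronwall loop does not close. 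Invoking \ref{C2} does not fix this: the Lyapunov inequality there is for a \emph{single} solution in the $\|\cdot\|_n$ norms, not for the difference of two solutions, and gives no control of $\|Y^n-Y^*\|_{V_2}$.

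The paper closes this gap using the \emph{splitting} in \ref{C3}: $B(t,x,y)=B_1(x,y)+B_2(t,x,y)$ with $B_1$ time--independent and $B_2:\RR\times H_1\times H_2\to H_2$. Hence $B(t{+}t_n,\cdot)-B'(t,\cdot)=B_2(t{+}t_n,\cdot)-B_2'(t,\cdot)$ takes values in $H_2$, so the cross term becomes an $H_2$ inner product, Young yields $\|Y^n-Y^*\|_{H_2}^2$, and $-\gamma\|Y^n-Y^*\|_{H_2}^2$ absorbs it. Once this is done, the paper handles the fact that $\widetilde Y^x_s$ is unbounded not with stopping times but with the truncation $1_{\{\|\widetilde Y^x_s\|_{H_2}\le M\}}$, Chebyshev on the complement, and the uniform $2p$--moments \eqref{0224:06}--\eqref{0220:07}; the final limits are $n\to\infty$, then $M\to\infty$, then the exponential tail parameter $l\to\infty$. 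Condition \ref{C2} enters the paper's proof only through the citation of \cite[Corollary~3.15]{CL2023}, which supplies the pointwise (in $x$) almost periodicity used as a starting point; the upgrade to uniformity in $x$ uses exclusively the $B=B_1+B_2$ decomposition of \ref{C3}.
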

\begin{proof}
It follows from \cite[Corollary 3.15]{CL2023} that for any $x\in H_1$,
$\mu_{\cdot}^x:\RR \to \mathcal P(H_2)$ is almost periodic. Then for any $\{t_n'\}\subset\RR$, there
exists a subsequence $\{t_n\}\subset\{t_n'\}$ and $\widetilde{\mu}^x_t$ such that
\begin{equation*}
\lim_{n\rightarrow\infty}\sup_{t\in\RR}d_{BL}\left(\mu_{t+t_n}^x,\widetilde{\mu}_t^x\right)=0.
\end{equation*}
Now we show that $\{\mu^x_t\}_{t\in\RR}$ is uniformly almost periodic. Specifically, we prove that
\begin{align}
\lim_{n\to\infty}\sup_{t\in \RR,\|x\|_{H_1}\leq R} d_{BL}\left(\mu_{t+t_n}^x,\widetilde{\mu}_t^x\right)=0,\quad \forall R>0.\label{F4.20}
\end{align}

To do this, define
$$
\mu_{n,t}^{x}:=\mu_{t+t_n}^x,\quad
B_{2,n}(t,x,y):=B_2(t+t_n,x,y),\quad G_{2,n}(t,x,y):=G_2(t+t_n,x,y).
$$
Actually, $\mu_{n,t}^{x}=\PP\circ[Y_{n,t}^x]^{-1},t\in\RR$,
where $\{Y_{n,t}^x\}_{t\in\RR}$ satisfies
\begin{equation*}
Y_{n,t}^{x}=Y_{n,s}^{x}+\int_s^tB_n(r,x,\tilde{Y}_{n,r}^{x})dr+\int_s^tG_{2,n}(r,x,Y_{n,r}^{x})d\bar{W}_r^2,
~t\geq s.
\end{equation*}
Here $B_n(r,x,y):=B_1(x,y)+B_{2,n}(r,x,y)$.
Since $B_2$ and $G_2$ are uniformly almost periodic, there exist $B_2': \RR\times H_1\times H_2\rightarrow H_2$, $G_2': \RR\times H_1\times H_2\rightarrow L_2(U_2,H_2)$ and
a subsequence of $\{t_n\}$, which we still denote by $\{t_n\}$, such that for any $M>0$,
\begin{equation}\label{0224:02}
\lim_{n\rightarrow\infty}\sup_{t\in\RR,(x,y)\in \textbf{B}_M}
\|B_2(t+t_n,x,y)-B_2'(t,x,y)\|_{H_2}=0
\end{equation}
and
\begin{equation}\label{0224:03}
\lim_{n\rightarrow\infty}\sup_{t\in\RR,(x,y)\in \textbf{B}_M}\|G_2(t+t_n,x,y)-G_2'(t,x,y)\|_{L_2(U_2,H_2)}=0,
\end{equation}
where
$$
\textbf{B}_{M}:=\{(x,y)\in H_1\times H_2:~\|x\|_{H_1}\vee\|y\|_{H_2}\leq M\}.
$$
Similar to \cite[Proposition 2]{CL2021}, we obtain that
$\{\widetilde{\mu}_t^x\}_{t\in\RR}=
\{\PP\circ[\widetilde{Y}_t^x]^{-1}\}_{t\in\mathbb{R}}$, where $\{\widetilde{Y}_t^x\}_{t\in\RR}$
satisfies
\begin{equation*}
\widetilde{Y}_{t}^{x}=\widetilde{Y}_{s}^{x}
+\int_s^tB'(r,x,\tilde{\widetilde{Y}}_{r}^{x})dr
+\int_s^tG_2'(r,x,\widetilde{Y}_{r}^{x})d\bar{W}_r^2,
\end{equation*}
where $B'(r,x,y):=B_1(x,y)+B_2'(r,x,y)$ and $\tilde{\widetilde{Y}}_{r}^{x}$ is defined similarly to $\tilde{Y}^{s,x,y}_{r}$, introduced below \eqref{SFE}.
In particular,
there exists $C>0$ such that for any $p\geq1$
and $x\in H_1$,
\begin{equation}\label{0220:07}
\EE\|\widetilde{Y}_t^x\|_{H_2}^{2p}=
\int_{H_2}\|y\|_{H_2}^{2p}\widetilde{\mu}_t^x(dy)\leq C(1+\|x\|_{H_1}^{2p}).
\end{equation}

By \ref{B2} and Young's inequality, one sees that for any $-N<t$,
\begin{align*}
&
\EE\|Y_{n,t}^{x}-\widetilde{Y}_{t}^{x}\|_{H_2}^2
-\EE\|Y_{n,-N}^{x}-\widetilde{Y}_{-N}^{x}\|_{H_2}^2\\
&
=2\EE\int_{-N}^t~_{V_2^*}
\langle B_n(s,x,\tilde{Y}_{n,s}^{x})-B'(s,x,\tilde{\widetilde{Y}}_{s}^{x}), \tilde{Y}_{n,s}^{x}-\tilde{\widetilde{Y}}_{s}^{x}\rangle_{V_2} ds\\
&\quad
+\EE\int_{-N}^t\|G_{2,n}(s,x,Y_{n,s}^{x})
-G_2'(s,x,\widetilde{Y}_{s}^{x})\|_{L_2(U_2,H_2)}^2ds\\
&
\leq \EE\int_{-N}^t\bigg(-\gamma\|Y_{n,s}^{x}-\widetilde{Y}_{s}^{x}\|_{H_2}^2
+2\|B_{2,n}(s,x,\widetilde{Y}_{s}^{x})
-B_2'(s,x,\widetilde{Y}_{s}^{x})\|_{H_2}
\|Y_{n,s}^{x}-\widetilde{Y}_{s}^{x}\|_{H_2}\\
&\qquad
+\|G_{2,n}(s,x,\widetilde{Y}_{s}^{x})
-G_2'(s,x,\widetilde{Y}_{s}^{x})\|_{L_2(U_2,H_2)}^2
\\
&\qquad
+2\|G_{2,n}(s,x,\widetilde{Y}_{s}^{x})
-G_2'(t,x,\widetilde{Y}_{s}^{x})\|_{L_2(U_2,H_2)}
\|G_{2,n}(s,x,Y_{n,s}^{x})-G_{2,n}(s,x,\widetilde{Y}_{s}^{x})\|_{L_2(U_2,H_2)}
\bigg)ds\\
&
\leq \EE\int_{-N}^t\left(-\frac{\gamma}{2}
\|Y_{n,s}^{x}-\widetilde{Y}_{s}^{x}\|_{H_2}^2
+C\Lambda_n(s,x)\right)ds,
\end{align*}
which along with Gronwall's inequality, \eqref{0220:07} and \eqref{0224:06} implies that
\begin{align}\label{0509:01}
\EE\|Y_{n,t}^{x}-\widetilde{Y}_{t}^{x}\|_{H_2}^2
&
\leq \EE\|Y_{n,-N}^{x}-\widetilde{Y}_{-N}^{x}\|_{H_2}^2
{\rm e}^{-\frac{\gamma}{2}(t+N)}
+C\int_{-N}^t{\rm e}^{-\frac{\gamma}{2}(t-s)}\EE\Lambda_n(s,x)ds\nonumber\\
&
\leq C(1+\|x\|_{H_1}^2)
{\rm e}^{-\frac{\gamma}{2}(t+N)}
+C\int_{-N}^t{\rm e}^{-\frac{\gamma}{2}(t-s)}\EE\Lambda_n(s,x)ds,
\end{align}
where
\begin{align*}
\Lambda_n(s,x):=
&
\|B_{2,n}(s,x,\widetilde{Y}_{s}^{x})
-B_2'(s,x,\widetilde{Y}_{s}^{x})\|_{H_2}^2\nonumber\\
&
+\left(1+\|x\|_{H_1}+\|\widetilde{Y}_{s}^{x}\|_{H_2}^\zeta
+\|Y_{n,s}^{x}\|_{H_2}^\zeta\right)
\|G_{2,n}(s,x,\widetilde{Y}_{s}^{x})
-G_2'(s,x,\widetilde{Y}_{s}^{x})\|_{L_2(U_2,H_2)}.
\end{align*}

It follows from \ref{C3}, \ref{B4}, \eqref{0224:06} and \eqref{0220:07} that
for any $s\in\RR$ and $x\in H_1$,
\begin{equation}\label{0514:01}
\EE\Lambda^2_n(s,x)
\leq C\left(1+\|x\|_{H_1}^4+\EE\|\widetilde{Y}_s^x\|_{H_2}^4
+\EE\|Y_{n,s}^x\|_{H_2}^4\right)
\leq C\left(1+\|x\|_{H_1}^4\right).
\end{equation}
Letting $N\rightarrow+\infty$ in \eqref{0509:01}, by \eqref{0514:01},
we have for any $l>0$,
\begin{align}\label{0509:04}
\EE\|Y_{n,t}^{x}-\widetilde{Y}_{t}^{x}\|_{H_2}^2
&
\leq \int_{-\infty}^t{\rm e}^{-\frac{\gamma}{2}(t-s)}
\EE\Lambda_n(s,x)ds\nonumber\\
&
=\int_{-\infty}^0{\rm e}^{\frac{\gamma}{2}\tau}\EE\Lambda_n(\tau+t,x)d\tau\nonumber\\
&
=\int_{-\infty}^0{\rm e}^{\frac{\gamma}{2}\tau}\left(\frac{d}{d\tau}\int_t^{\tau+t}
\EE\Lambda_n(r,x)dr\right)d\tau\nonumber\\
&
=\int_{-\infty}^0\frac{\gamma}{2}{\rm e}^{\frac{\gamma}{2}\tau}\int_{\tau+t}^t\EE\Lambda_n(r,x)drd\tau\nonumber\\
&
=\int_{-\infty}^{-l}\frac{\gamma}{2}{\rm e}^{\frac{\gamma}{2}\tau}\int_{\tau+t}^t\EE\Lambda_n(r,x)drd\tau
+\int_{-l}^0\frac{\gamma}{2}{\rm e}^{\frac{\gamma}{2}\tau}\int_{\tau+t}^t\EE\Lambda_n(r,x)drd\tau\nonumber\\
&
\leq C(1+\|x\|_{H_1}^2)(l+\gamma/2){\rm e}^{-\frac{\gamma}{2} l}+ \int_{t-l}^t\EE\Lambda_n(r,x)dr.
\end{align}
Note that for any $M>R>0$ and $\|x\|_{H_1}\leq R$,
\begin{align}\label{0509:02}
\EE\left(\Lambda_n(r,x)1_{\{\|\widetilde{Y}_r^x\|_{H_2}\leq M\}}\right)
&
\leq C\sup_{t\in\RR,(x,y)\in \textbf{B}_M}\|B_{2,n}(t,x,y)-B_2'(t,x,y)\|^2\nonumber\\
&\quad
+C\left(1+R\right)
\sup_{t\in\RR,(x,y)\in \textbf{B}_M}\|G_{2,n}(t,x,y)-G_2'(t,x,y)\|_{L_2(U_2,H_2)}.
\end{align}
By H\"older's inequality, Chebyshev's inequality, \eqref{0514:01}
and \eqref{0220:07}, one sees that for any $\|x\|_{H_1}\leq R$,
\begin{align}\label{0509:03}
\EE\left(\Lambda_n(r,x)1_{\{\|\widetilde{Y}_r^x\|_{H_2}> M\}}\right)
&
\leq \left(\EE\Lambda^2_n(r,x)\right)^{1/2}
\left(\EE1_{\{\|\widetilde{Y}_r^x\|_{H_2}> M\}}\right)^{\frac{1}{2}}\nonumber\\
&
\leq \left(1+\|x\|^{2}_{H_1}\right)
\left(\EE\|\widetilde{Y}_r^x\|_{H_2}^{2}\right)^{\frac{1}{2}}M^{-1}\nonumber\\
&
\leq C(1+R^{2})M^{-1}.
\end{align}

Combining \eqref{0509:04}, \eqref{0509:02} and \eqref{0509:03},
we obtain that for any $R>0$, $M>R$ and $l>0$,
\begin{align}\label{0509:05}
\sup_{t\in\RR,\|x\|_{H_1}\leq R}\EE\|Y_{n,t}^{x}-\widetilde{Y}_{t}^{x}\|_{H_2}^2
&
\leq C(1+R^2)(l+\gamma/2){\rm e}^{-\frac{\gamma}{2} l}+C l(1+R^{2})M^{-1}\\
&\quad
+ lC\sup_{t\in\RR,(x,y)\in \textbf{B}_M}\|B_{2,n}(t,x,y)-B_2'(t,x,y)\|^2\nonumber\\
&\quad
+Cl\left(1+R\right)
\sup_{t\in\RR,(x,y)\in \textbf{B}_M}\|G_{2,n}(t,x,y)-G_2'(t,x,y)\|_{L_2(U_2,H_2)}.\nonumber
\end{align}
Hence, by \eqref{0224:02}, \eqref{0224:03} and \eqref{0509:05}, we have
\begin{align*}
\lim_{n\rightarrow\infty}\sup_{t\in\RR,\|x\|_{H_1}\leq R}
\EE\|Y_{n,t}^{x}-\widetilde{Y}_{t}^{x}\|_{H_2}^2
\leq C(1+R^2)(l+\gamma/2){\rm e}^{-\frac{\gamma}{2} l}+Cl(1+R^{2})M^{-1},
\end{align*}
which implies \eqref{F4.20} holds by first letting $M\rightarrow+\infty$
and then letting $l\rightarrow+\infty$.

\end{proof}

\vspace{0.2cm}

Let $(\mathcal X_{1,i},\|\cdot\|_{\mathcal X_{1,i}})$ be Hilbert space and $(\mathcal X_{2,i},d_{2,i})$ be complete metric space for $i=1,2$.
For any $\varphi\in C(\RR\times\mathcal X_{1,1},\mathcal X_{2,1})$, we denote by $\mathfrak{M}_\varphi$ the family of all sequences
$\{t_n\}\subset \RR$ such that for any $l>0$ and $R>0$,
$\varphi(t_n+t,x)$ converges uniformly in $t\in[-l,l]$
and $\|x\|_{\mathcal X_{1,1}}\leq R$ as $n\rightarrow\infty$. Now we introduce the following result, which is from \cite{Shch};
see also Theorem 2.26 and Remark 2.30 in \cite{CL2020}.

\begin{lemma}\label{RComlem}
Let $\varphi\in C(\RR\times\mathcal X_{1,1},\mathcal X_{2,1})$ and $\psi\in C(\RR\times\mathcal X_{2,1},\mathcal X_{2,2})$.
If $\mathfrak{M}_\psi\subset\mathfrak{M}_\varphi$ and $\psi$ is uniformly almost periodic
then so is $\varphi$.
\end{lemma}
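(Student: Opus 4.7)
My plan is to verify the Bochner-type sequential characterization of uniform almost periodicity for $\varphi$: namely, a continuous $\varphi : \RR \times \mathcal{X}_{1,1} \to \mathcal{X}_{2,1}$ is uniformly almost periodic if and only if every sequence $\{t_n'\} \subset \RR$ admits a subsequence $\{t_n\}$ along which the translates $\varphi(t + t_n, x)$ converge uniformly in $(t,x) \in \RR \times Q$ for every bounded $Q \subset \mathcal{X}_{1,1}$. Given this characterization, the task reduces to producing, for each $\{t_n'\}$, such a ``good'' subsequence, and the hypotheses supply exactly the two ingredients needed to do so: the uniform almost periodicity of $\psi$ provides access to strong extractions, and the containment $\mathfrak{M}_\psi \subset \mathfrak{M}_\varphi$ transfers them to $\varphi$.

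Concretely, first I would fix an arbitrary sequence $\{t_n'\} \subset \RR$ and apply the u.a.p. of $\psi$ to extract a subsequence $\{t_n\}$ such that $\psi(t+t_n, y)$ converges uniformly in $t \in \RR$ and uniformly on bounded subsets of $\mathcal{X}_{2,1}$. Since uniform convergence on $\RR$ implies uniform convergence on each interval $[-l, l]$, this subsequence lies in $\mathfrak{M}_\psi$, and therefore, by the hypothesis, in $\mathfrak{M}_\varphi$. Consequently $\varphi(t+t_n, x) \to \varphi^\ast(t,x)$ uniformly in $(t,x) \in [-l,l] \times Q$ for every $l > 0$ and every bounded $Q \subset \mathcal{X}_{1,1}$. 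At this stage I have local-in-$t$ uniform convergence for $\varphi$; the remaining step is the critical upgrade to uniform convergence on all of $\RR$.

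The main obstacle, and what I expect to be the only nontrivial step, is precisely this local-to-global upgrade. My plan is a contradiction argument combined with a second extraction. If uniformity on $\RR \times Q$ failed, there would exist $\varepsilon_0 > 0$, a further subsequence $\{t_{n_k}\}$, and points $s_k \in \RR$ with $|s_k| \to \infty$ and $x_k \in Q$ for which $d_{2,1}(\varphi(s_k + t_{n_k}, x_k), \varphi^\ast(s_k, x_k)) \geq \varepsilon_0$. Applying u.a.p.\ of $\psi$ to the shifted sequence $\{s_k + t_{n_k}\}$ produces (after another extraction) yet another element of $\mathfrak{M}_\psi \subset \mathfrak{M}_\varphi$, giving local uniform convergence of $\varphi(t + s_k + t_{n_k}, \cdot)$ to a limit that, evaluated at $t = 0$, must be consistent with $\varphi^\ast(s_k, \cdot)$ up to arbitrarily small error once $k$ is large; this contradicts the $\varepsilon_0$-separation. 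The clean execution of this double-extraction is the heart of Shcherbakov's comparability theory for almost periodic functions, and the abstract lemma follows by adapting the argument of \cite{Shch} (equivalently, Theorem 2.26 and Remark 2.30 of \cite{CL2020}) to the present setting, with the compactness/equicontinuity of the family of translates of $\psi$ providing the key compactness needed for the second extraction.
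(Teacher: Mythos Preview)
The paper does not actually prove this lemma: immediately preceding the statement it writes ``Now we introduce the following result, which is from \cite{Shch}; see also Theorem 2.26 and Remark 2.30 in \cite{CL2020},'' and no argument follows. Your proposal therefore already goes beyond the paper by sketching the mechanism (Bochner characterization, extraction via the u.a.p.\ of $\psi$, transfer through $\mathfrak{M}_\psi\subset\mathfrak{M}_\varphi$, and the local-to-global upgrade), while ultimately deferring the technical details to exactly the same two references. In that sense the two ``approaches'' coincide at the level of attribution, and your outline correctly identifies the one nontrivial step (upgrading convergence on $[-l,l]$ to convergence on all of $\RR$) that the Shcherbakov comparability argument handles.

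One caution on your sketch of the upgrade: as written, the contradiction step is imprecise, because after the second extraction you compare $\varphi(s_k+t_{n_k},x_k)$ with $\varphi^\ast(s_k,x_k)$, but both arguments still carry the index $k$, so the new local limit at $t=0$ does not directly pin down $\varphi^\ast(s_k,\cdot)$. The standard execution (as in \cite{Shch} or \cite{CL2020}) instead uses the fact that u.a.p.\ of $\psi$ makes $\{t_n\}\in\mathfrak{M}_\psi$ equivalent to a uniformly Cauchy condition on $\RR$, and then shows that $\{t_n\}\in\mathfrak{M}_\varphi$ forces the same Cauchy property for $\varphi$-translates via a double-sequence diagonal argument; this is what actually closes the gap. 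Since you explicitly point to those references for the clean version, the proposal is acceptable as a plan, but the heuristic you wrote for Step~3 would need to be replaced by that Cauchy-based reasoning if you were to write out a self-contained proof.
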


\begin{lemma}\label{FAP}
Suppose that assumptions \ref{A5}, \ref{B1}-\ref{B4} and \ref{C1}-\ref{C3} hold.
Then the averaged coefficient $\bar{F}(t,x)$ defined in $\eqref{e:drift}$ is uniformly almost periodic.
\end{lemma}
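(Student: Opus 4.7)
The plan is to verify the uniform almost periodicity of $\bar F$ via the sequential characterization recalled in the remark preceding the statement: it suffices to show that from any $\{t_n'\}\subset\RR$ one can extract a subsequence $\{t_n\}$ along which $\bar F(t+t_n,x)$ converges uniformly in $t\in\RR$ and on any ball $\{\|x\|_{H_1}\leq R\}$. By assumption \ref{C3}, $F$ itself is uniformly almost periodic, and by Lemma \ref{Le5.2} so is $(t,x)\mapsto \mu_t^x$ with values in $(\cP(H_2),d_{BL})$. A diagonal extraction therefore yields $\{t_n\}\subset\{t_n'\}$ together with limits $F':\RR\times H_1\times H_2\to H_1$ and $\{\widetilde\mu_t^x\}_{t\in\RR,x\in H_1}\subset\cP(H_2)$ satisfying, for every $M,R>0$,
\begin{equation*}
\sup_{t\in\RR,\,\|x\|_{H_1}\vee\|y\|_{H_2}\leq M}\!\!\|F(t+t_n,x,y)-F'(t,x,y)\|_{H_1}\to 0,\quad
\sup_{t\in\RR,\,\|x\|_{H_1}\leq R}\!\!d_{BL}(\mu_{t+t_n}^x,\widetilde\mu_t^x)\to 0.
\end{equation*}

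Next I would introduce the candidate limit $\bar F'(t,x):=\int_{H_2}F'(t,x,y)\widetilde\mu_t^x(dy)$ and split
\begin{equation*}
\bar F(t+t_n,x)-\bar F'(t,x)=\mathcal{I}_n(t,x)+\mathcal{J}_n(t,x),
\end{equation*}
where $\mathcal{I}_n:=\int[F(t+t_n,x,y)-F'(t,x,y)]\mu_{t+t_n}^x(dy)$ and $\mathcal{J}_n:=\int F'(t,x,y)(\mu_{t+t_n}^x-\widetilde\mu_t^x)(dy)$. The uniform second-moment bound \eqref{0317:03} on $\{\mu_t^x\}$, combined with Fatou's lemma applied to the narrow limit $\widetilde\mu_t^x$, yields $\sup_{n,\,t\in\RR,\,\|x\|_{H_1}\leq R}\bigl(\int\|y\|_{H_2}^2\mu_{t+t_n}^x(dy)+\int\|y\|_{H_2}^2\widetilde\mu_t^x(dy)\bigr)\leq C(1+R^2)$. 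Together with the linear growth $\|F(t,x,y)\|_{H_1}\leq C(1+\|x\|_{H_1}+\|y\|_{H_2})$ from \ref{A5}, inherited by $F'$ through uniform convergence on bounded sets, Chebyshev's inequality gives for any $M>0$
\begin{equation*}
\int_{\{\|y\|_{H_2}>M\}}\!\|F(t+t_n,x,y)\|_{H_1}\mu_{t+t_n}^x(dy)+\int_{\{\|y\|_{H_2}>M\}}\!\|F'(t,x,y)\|_{H_1}\widetilde\mu_t^x(dy)\leq \frac{C_R}{M}.
\end{equation*}

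To control $\mathcal{I}_n$, I would restrict to $\{\|y\|_{H_2}\leq M\}$ and invoke the uniform convergence $F(t+t_n,x,y)\to F'(t,x,y)$ over $\RR\times\{\|x\|_{H_1}\leq R\}\times\{\|y\|_{H_2}\leq M\}$, while the tail is handled by the previous display. To control $\mathcal{J}_n$, the difficulty is that $F'(t,x,\cdot)$ is not bounded, so $d_{BL}$-convergence cannot be applied directly; I would therefore introduce a Lipschitz cutoff $\chi_M:H_2\to[0,1]$ with $\chi_M\equiv 1$ on $\{\|y\|_{H_2}\leq M-1\}$ and $\chi_M\equiv 0$ outside $\{\|y\|_{H_2}\leq M\}$, so that $y\mapsto \chi_M(y)F'(t,x,y)$ is bounded Lipschitz with $\|\cdot\|_{BL}\leq C(1+R+M)$ uniformly in $(t,x)$, whence
\begin{equation*}
\Bigl|\int \chi_M F'\,d(\mu_{t+t_n}^x-\widetilde\mu_t^x)\Bigr|\leq C(1+R+M)\,d_{BL}(\mu_{t+t_n}^x,\widetilde\mu_t^x)\longrightarrow 0
\end{equation*}
uniformly in $t\in\RR$ and $\|x\|_{H_1}\leq R$. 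The $(1-\chi_M)$ tails are bounded by the previous moment estimate. Letting first $n\to\infty$ and then $M\to\infty$ delivers the required uniform convergence of $\bar F(t+t_n,\cdot)$.

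The main obstacle is precisely the unboundedness of $F$ in $y$: the $d_{BL}$-convergence of the measures alone is insufficient. The resolution is to exploit the uniform second-moment bound on $\{\mu_t^x\}_{t\in\RR,\,x\in H_1}$ from Corollary \ref{cor0528} together with the Lipschitz cutoff $\chi_M$ to decouple the bulk (where uniform convergence of $F$ applies) from the tails (which are uniformly small by Chebyshev). This is the same philosophy used in the proof of Lemma \ref{Le5.2}, and suggests that Lemma \ref{RComlem} could equally be invoked with $\psi=(F,\mu_{\cdot}^{\cdot})$ and $\varphi=\bar F$, but the direct sequential argument above is self-contained.
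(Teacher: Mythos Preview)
Your proposal is correct and follows essentially the same decomposition as the paper: split $\bar F(t+t_n,x)-\bar F'(t,x)$ into a term $\mathcal I_n$ coming from the change in $F$ and a term $\mathcal J_n$ coming from the change in measure, then handle each via a bulk/tail argument using a Lipschitz cutoff and the uniform second-moment bound \eqref{0317:03}. The paper's cutoff $F'_{R'}$ plays exactly the role of your $\chi_M F'$.

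The only notable differences are in framing. The paper invokes Lemma~\ref{RComlem} and works with the sets $\mathfrak M_\varphi$ (convergence uniform on compact time intervals), showing $\mathfrak M_{(F,\mu)}\subset\mathfrak M_{\bar F}$; you instead use the Bochner-type sequential characterization directly over all of $\RR$, which is the alternative you yourself mention at the end. Second, to obtain the moment bound on the limit measures $\widetilde\mu_t^x$, the paper identifies $\{\widetilde\mu_t^x\}$ as the evolution system of measures for the limiting frozen equation (with coefficients $B',G_2'$) and then quotes \eqref{0220:07}, whereas your use of lower semicontinuity of $y\mapsto\|y\|_{H_2}^2$ under narrow convergence is more elementary and avoids that detour. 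Both routes are valid; yours is self-contained, while the paper's gives extra structural information about $\widetilde\mu_t^x$ that is not actually needed for this lemma.
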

\begin{proof}
Denote $(F,\mu):\RR\times H_1\times H_2\to H_1\times \mathcal{P}(H_2)$. Let $\mathfrak{M}_{(F,\mu)}$ be the space of all sequence $\{t_n\}\subset\RR$ such that
there exists $F':\RR\times H_1\times H_2\rightarrow H_1$ and $\widetilde{\mu}:\RR\times H_1\rightarrow \mathcal P(H_2)$ so that for any $R>0$ and $l>0$
\begin{equation}\label{0220:01}
\lim_{n\rightarrow\infty}\sup_{|t|\leq l,(x,y)\in \textbf{B}_R}\|F(t+t_n,x,y)-F'(t,x,y)\|_{H_1}=0
\end{equation}
and
\begin{equation}\label{0220:06}
\lim_{n\rightarrow\infty}\sup_{|t|\leq l,x\in \textbf{B}_R}d_{BL}(\mu_{t+t_n}^x,\widetilde{\mu}_t^x)=0.
\end{equation}
Note that $(F,\mu)$ is uniformly almost periodic by assumption \ref{C3} and Lemma \ref{Le5.2}, thus in order to prove that $\bar{F}(t,x)$ is uniformly almost periodic, it is sufficient to show that $\mathfrak{M}_{(F,\mu)}\subset\mathfrak{M}_{\bar{F}}$ by Lemma \ref{FAP}.

Indeed, taking any $\{t_n\}\in\mathfrak{M}_{(F,\mu)}$, for any $(t,x)\in[-l,l]\times H_1$,
\begin{align}\label{0220:02}
&
\left\|\bar{F}(t+t_n,x)-\int_{H_2}F'(t,x,y)\widetilde{\mu}_t^x(dy)\right\|_{H_1}\nonumber\\
&
=\left\|\int_{H_2}F(t+t_n,x,y)\mu_{t+t_n}^x(dy)
-\int_{H_2}F'(t,x,y)\widetilde{\mu}_t^x(dy)\right\|_{H_1}\nonumber\\
&
\leq\left\|\int_{H_2}\left(F(t+t_n,x,y)-F'(t,x,y)\right)\mu_{t+t_n}^x(dy)\right\|_{H_1}\nonumber\\
&\quad
+\left\|\int_{H_2}F'(t,x,y)\left(\mu_{t+t_n}^x(dy)-\widetilde{\mu}_t^x(dy)\right)\right\|_{H_1}
=:\mathcal I_1+\mathcal I_2.
\end{align}

For $\mathcal I_1$, by \ref{A5} and \eqref{0224:06}, we have for any $R'>0$ and $|t|\leq l$,
\begin{align}\label{0220:03}
\mathcal I_1
&
\leq\left\|\int_{\{\|y\|_{H_2}\leq R'\}}
\left(F(t+t_n,x,y)-F'(t,x,y)\right)\mu_{t+t_n}^x(dy)\right\|\nonumber\\
&\quad
+\left\|\int_{\{\|y\|_{H_2}> R'\}}
\left(F(t+t_n,x,y)-F'(t,x,y)\right)\mu_{t+t_n}^x(dy)\right\|\nonumber\\
&
\leq\sup_{|t|\leq l,\|y\|_{H_2}\leq R'}\|F(t+t_n,x,y)-F'(t,x,y)\|_{H_1}\nonumber\\
&\quad
+C\int_{\{\|y\|_{H_2}>R'\}}\left(1+\|x\|_{H_1}+\|y\|_{H_2}\right)\mu_{t+t_n}^x(dy)\nonumber\\
&
\leq \sup_{|t|\leq l,\|y\|_{H_2}\leq R'}\|F(t+t_n,x,y)-F'(t,x,y)\|_{H_1}
+\frac{C(1+\|x\|_{H_1}^2)}{R'}.
\end{align}
For any $R'>0$, define
\begin{equation*}
F'_{R'}(t,x,y):=
\begin{cases}
F'(t,x,y), & \|y\|_{H_2}\leq R', \\
F'(t,x,y)\left(2-\frac{\|y\|_{H_2}}{R'}\right), & R'\leq \|y\|_{H_2}\leq 2R', \\
0, & \|y\|_{H_2}> 2R'.
\end{cases}
\end{equation*}
Then we have
\begin{align}\label{0220:04}
\mathcal I_2
&
:=\left\|\int_{H_2}F'(t,x,y)\left(\mu_{t+t_n}^x(dy)-\widetilde{\mu}_t^x(dy)\right)\right\|_{H_1}\nonumber\\
&
\leq \left\|\int_{H_2}\left(F'(t,x,y)-F'_{R'}(t,x,y)\right)
\left(\mu_{t+t_n}^x(dy)-\widetilde{\mu}_t^x(dy)\right)\right\|_{H_1}\nonumber\\
&\quad
+\left\|\int_{H_2}F'_{R'}(t,x,y)\left(\mu_{t+t_n}^x(dy)-\widetilde{\mu}_t^x(dy)\right)\right\|_{H_1}\nonumber\\
&
\leq \mathcal I_2^1+\|F'_{R'}(t,x,\cdot)\|_{BL}d_{BL}(\mu_{t+t_n}^x,\widetilde{\mu}_t^x).
\end{align}

Since $B_2$ and $G_2$ are uniformly almost periodic, there exists a subsequence $\{t_n'\}\subset\{t_n\}$,
$B'$ and $G_2'$ such that for any $R>0$,
\begin{equation*}
\lim_{n\rightarrow\infty}\sup_{t\in\RR,(x,y)\in \textbf{B}_R}\|B_2(t+t_n',x,y)-B_2'(t,x,y)\|_{H_2}=0
\end{equation*}
and
\begin{equation*}
\lim_{n\rightarrow\infty}\sup_{t\in\RR,(x,y)\in \textbf{B}_R}\|G_2(t+t_n',x,y)-G_2'(t,x,y)\|_{L_2(U_2,H_2)}=0.
\end{equation*}
Thanks to \cite[Proposition 2]{CL2021}, we obtain that $\{\widetilde{\mu}_t^x\}_{t\in\RR}$ is the evolution system of
measures of
\begin{equation*}
dY_t^x=B'(t,x,Y_t^x)dt+G_2'(t,x,Y_t^x)d\bar W_t^2,
\end{equation*}
where $B'(t,x,y):=B_1(x,y)+B_2'(t,x,y)$.
In view of \ref{A5}, \eqref{0224:06} and \eqref{0220:07}, we get
\begin{align}\label{0220:05}
\mathcal I_2^1
&
:=\left\|\int_{H_2}\left(F'(t,x,y)-F'_{R'}(t,x,y)\right)
\left(\mu_{t+t_n}^x(dy)-\widetilde{\mu}_t^x(dy)\right)\right\|_{H_1}\nonumber\\
&
\leq\int_{\{\|y\|_{H_2}>R'\}}\left\|F'(t,x,y)-F'_{R'}(t,x,y)\right\|_{H_1}
\left(\mu_{t+t_n}^x(dy)-\widetilde{\mu}_t^x(dy)\right)\nonumber\\
&
\leq \frac{C(1+\|x\|_{H_1}^2)}{R'}.
\end{align}
Combining \eqref{0220:02}, \eqref{0220:03}, \eqref{0220:04} and  \eqref{0220:05},
by \eqref{0220:01} and \eqref{0220:06},
we obtain that for any $0<R<R'$,
\begin{align*}
&
\lim_{n\rightarrow\infty}\sup_{|t|\leq l, \,\|x\|_{H_1}\leq R}
\left\|\bar{F}(t+t_n,x)-\int_{H_2}F'(t,x,y)\widetilde{\mu}_t^x(dy)\right\|\\
&
\leq\lim_{n\rightarrow\infty}\sup_{|t|\leq l,(x,y)\in \textbf{B}_{R'}}\|F(t+t_n,x,y)-F'(t,x,y)\|_{H_1}\\
&\quad
+\lim_{n\rightarrow\infty}{\color{blue}C_{R',l}}
\sup_{|t|\leq l,\|x\|_{H_1}\leq R}
d_{BL}(\mu_{t+t_n}^x,\widetilde{\mu}_t^x)
+\frac{C(1+R^2)}{R'}\\
&
\leq \frac{C(1+R^2)}{R'},
\end{align*}
which along with the arbitrary of $R'$ imply that $\{t_n\}\in\mathfrak{M}_{\bar{F}}$. Therefor, $\mathfrak{M}_{(F,\mu)}\subset\mathfrak{M}_{\bar{F}}$. The proof is complete.
\end{proof}

\begin{remark}\label{R2.5}
Since $\bar{F}(\cdot,\cdot)$ is uniformly almost periodic,
it follows from \cite[Theorem I.3.2]{Besi55} or \cite[Theorem 3.4]{CL2017} that
there exists $\bar{F}:H_1\rightarrow H_1$ such that for any $T>0$, $R>0$ and $x\in H_1$ with $\|x\|_{H_1}\leq R$,
\begin{align}
\sup_{t\in\RR}\left\|\frac{1}{T}\int^{T+t}_t \bar F(s,x)\,ds-\bar{F}(x)\right\|_{H_1}\leq \phi^R_2(T),\label{AsyF}
\end{align}
where $\phi^R_2(\cdot)$ satisfies $\phi^R_2(T)\rightarrow0$, as $T\rightarrow +\infty$.
\end{remark}

Note that using \eref{AsyF} and  \ref{C1}, it is easy to check $\bar{A}$ satisfies \ref{A1}-\ref{A4}, $\bar{F}$ and $\bar{G}_1$ are global Lipschitz continuous; see e.g. \cite[Lemma 4.2]{CL2023}.
Consequently, averaged equation \eref{AVE2} has a unique variational solution in the sense of Definition \ref{S.S.}, i.e., for its $dt\otimes \PP$-equivalence class $\check{\bar{X}}$ we have $\check{\bar{X}}\in L^{\alpha}([0, T]\times \Omega, dt\otimes \PP; V_1)\cap L^2([0, T]\times\Omega, dt\otimes \PP; H_1)$ with $\alpha$ as in \ref{A3}, we have $\PP$-a.s.
\begin{align}
\bar{X}_{t}=x+\int_{0}^{t}\bar A(\tilde{\bar{X}}_{s})ds+\int_{0}^{t}
\bar{F}(\bar{X}_{s})ds+\int_{0}^{t}\bar G_1(\bar{X}_{s})dW^{1}_s,\label{4.6b}
\end{align}
where $\tilde{\bar{X}}$ is any $V_1$-valued progressively measurable $dt\otimes \PP$-version of $\check{\bar{X}}$. Moreover, we also have the following estimates. Because their proofs follows almost the same steps in the proof of Lemmas \ref{PMY} and \ref{COX}, we omit them here.
\begin{lemma}\label{L4.5} For any $T>0$, $p\geq 1$, there exist constants $C_{p,T},C_T>0$ and $m>0$ such that for any $x\in H_1$,
\begin{align*}
\mathbb{E}\left(\sup_{t\in[0,T]}\|\bar{X}_{t}\|^{2p}_{H_1}\right)+\EE\left(\int_0^T\|\bar{X}_{t}\|^{2p-2}_{H_1}\|\tilde{\bar{X}}_{t}\|_{V_1}^{\alpha}dt\right)\leq C_{p,T}(1+\|x\|^{2p}_{H_1})
\end{align*}
and
\begin{align}
\mathbb{E}\left[\int^{T}_0\|\bar{X}_{t}-\bar{X}_{t(\delta)}\|^2_{H_1} dt\right]\leq C_{T}\delta^{1/2}(1+\|x\|^m_{H_1}).\label{FbarXT}
\end{align}
\end{lemma}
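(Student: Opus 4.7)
The plan is to follow the same line of argument as the proofs of Lemmas \ref{PMY} and \ref{COX}. The key observation is that, thanks to \ref{C1} and the discussion following Remark \ref{R2.5}, the averaged drift $\bar{A}$ satisfies conditions \ref{A1}--\ref{A4} and both $\bar{F}$ and $\bar{G}_1$ are globally Lipschitz on $H_1$ with linear growth. Hence the averaged equation \eqref{AVE2} is a standard variational SPDE with the same structural properties as the slow component of \eqref{main equation} (minus the fast-variable dependence), and all ingredients of the original proofs transplant directly by replacing $(A(t/\varepsilon,\cdot), F(t/\varepsilon,\cdot,\cdot), G_1(t/\varepsilon,\cdot))$ with $(\bar{A},\bar{F},\bar{G}_1)$.

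For the moment estimate, I would apply It\^o's formula to $\|\bar{X}_t\|^{2p}_{H_1}$. The coercivity of $\bar{A}$ produces a negative term $-2p\theta\int_0^t \|\bar{X}_s\|^{2p-2}_{H_1}\|\tilde{\bar{X}}_s\|^{\alpha}_{V_1}\, ds$, which both absorbs any superlinear contribution and, after rearrangement, yields the $V_1$-integral appearing in the statement. The linear growth of $\bar{F}$ and $\bar{G}_1$ controls the remaining deterministic pieces, the Burkholder--Davis--Gundy inequality together with Young's inequality handles the martingale part, and Gronwall's inequality closes the argument.

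For the time-increment bound \eqref{FbarXT}, I would apply It\^o's formula on each subinterval $[k\delta,(k+1)\delta]$ to $\|\bar{X}_t-\bar{X}_{k\delta}\|^2_{H_1}$, then integrate in $t$, take expectation, sum over $k$, and average over $t\in[0,T]$. The stochastic integral contributes $O(\delta)$ via the It\^o isometry combined with the moment bound just proved, and the $\bar{F}$ term contributes $O(\delta^2)$ by Cauchy--Schwarz and linear growth. The main obstacle is the $\bar{A}$ term: because $\bar{A}(\tilde{\bar{X}}_s)$ only lives in $V_1^*$ with the polynomial growth of \ref{A4}, one must pair it with $\tilde{\bar{X}}_s-\tilde{\bar{X}}_{k\delta}\in V_1$ through the duality ${_{V_1^*}}\langle \cdot,\cdot\rangle_{V_1}$ and then split via H\"older's inequality with conjugate exponents $\alpha/(\alpha-1)$ and $\alpha$, invoking the first part to control $\mathbb{E}\int_0^T \|\tilde{\bar{X}}_s\|^{\alpha}_{V_1}(1+\|\bar{X}_s\|^{\beta}_{H_1})\, ds$. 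Balancing these exponents identifies $\delta^{1/2}$ as the limiting rate among the three contributions, delivering \eqref{FbarXT}.
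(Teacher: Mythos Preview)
Your proposal is correct and takes essentially the same approach as the paper, which in fact omits the proof entirely with the remark that it ``follows almost the same steps in the proof of Lemmas~\ref{PMY} and~\ref{COX}.'' Your outline of It\^o's formula plus coercivity, BDG, and Gronwall for the moment bound, and the subinterval decomposition with the $\alpha/(\alpha-1)$--$\alpha$ H\"older splitting for the increment bound, is exactly what those earlier lemmas do in the multi-scale setting, transplanted to the averaged equation \eqref{AVE2}.
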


\vspace{0.1cm}

With the help of the above results,
the proof of Theorem \ref{main result 2} can be obtained by
modifying the methodology developed for \cite[Theorem 4.5]{CL2023}.
For the convenience of readers, we present an abbreviated version of the proof below.

\noindent\textbf{Proof of Theorem \ref{main result 2}:}
Applying It\^{o}'s formula, we have
\begin{align*}
\|\bar X_{t}^{\vare}-\bar{X}_{t}\|^2_{H_1}
=
&
2\int_0^t{_{V^{*}_1}}\langle A(s/\vare,\tilde{\bar X}_{s}^{\vare})-\bar{A}(\tilde{\bar X}_{s}),
\tilde{\bar X}_{s}^{\vare}-\tilde{\bar{X}}_{s}\rangle_{V_1} ds
+\tilde{\mathcal{M}}^{\vare}(t)\\
&
+\int_0^t\left(2\left\langle\bar F(s/\vare,\bar X_{s}^\varepsilon)-\bar{F}(\bar{X}_s), \bar{ X}_{s}^{\vare}-\bar{X}_{s}\right\rangle_{H_1}
+\|G_{1}(s/\vare,\bar{X}^{\vare}_s)-\bar G_1(\bar{X}_s)
\|_{L_{2}(U_1, H_1)}^2\right)ds,
\end{align*}
where
$$
\tilde{\mathcal{M}}^{\vare}(t):=2\int_0^t\langle \bar X_{s}^{\vare}-\bar{X}_{s}, [G_{1}(s/\vare,\bar{X}_s^\varepsilon)-\bar G_1(\bar{X}_s)]dW_s^{1}\rangle_{H_1}.
$$
Hence, by \ref{A2} and \ref{A5}, we have for any $\delta\in(0,1]$,
\begin{align*}
\|\bar X_{t}^{\vare}-\bar{X}_{t}\|^2_{H_1}
\leq
&
C\int_0^t\|\bar X_{s}^{\vare}-\bar{X}_{s}\|^2_{H_1}\left[\rho(\tilde{\bar{X}}^{\varepsilon}_s)
+\eta(\tilde{\bar{X}}_s)\right]ds\nonumber\\
&
+3\int_0^t\|G_{1}(s/\vare,\bar{X}_s^\varepsilon)-G_{1}(s/\vare,\bar{X}_s)
\|_{L_{2}(U_1, H_1)}^2ds\nonumber\\
&
+3\int_0^t\|G_{1}(s/\vare,\bar{X}_s)-G_{1}(s/\vare,\bar{X}_{s(\delta)})+\bar G_1(\bar{X}_{s(\delta)})-\bar G_1(\bar{X}_s)\|_{L_{2}(U_1, H_1)}^2ds\nonumber\\
&
+2\int_0^t\left\langle\left[\bar F(s/\vare,\bar X_{s(\delta)}^\varepsilon)-\bar{F}(\bar X^\varepsilon_{s(\delta)})\right], \bar X_{s}^{\vare}-\bar X_{s(\delta)}^{\vare}-\bar{X}_{s}+\bar{X}_{s(\delta)}\right\rangle_{H_1} ds\nonumber\\
&
+2\int_0^t\left\langle\left[\bar F(s/\vare,\bar X_{s}^\varepsilon)-\bar{F}(\bar X^{\vare}_s)-\bar F(s/\vare,\bar X_{s(\delta)}^\varepsilon)
+\bar{F}(\bar X^\varepsilon_{s(\delta)})\right], \bar X_{s}^{\vare}-\bar{X}_{s}\right\rangle_{H_1} ds\nonumber\\
&
+2\int_0^t\left\langle\left[\bar{F}(\bar X_{s}^\varepsilon)-\bar{F}(\bar{X}_s)\right], \bar X_{s}^{\vare}-\bar{X}_{s}\right\rangle_{H_1} ds
+\tilde{\sJ}^{\vare}_1(t)+2\tilde{\sJ}^{\vare}_1(t)+3\tilde{\sJ}^{\vare}_2(t)+\tilde{\mathcal{M}}^{\vare}(t)\\
\leq
&
C\int_0^t\|\bar X_{s}^{\vare}-\bar{X}_{s}\|^2_{H_1}\left[\rho(\tilde{\bar{X}}^{\varepsilon}_s)+\eta(\tilde{\bar{X}}_s)\right]ds\nonumber\\
&
+C\left[\int_0^t \left(1+\|\bar X_{s(\delta)}^\varepsilon\|_{H_1}^2\right)ds\right]^{1/2}
\left[\int^t_0 \left(\|\bar X^{\varepsilon}_{s}-\bar X^{\varepsilon}_{s(\delta)}\|^2_{H_1}+\|\bar X_{s}-\bar X_{s(\delta)}\|^2_{H_1} \right)ds\right]^{1/2}\nonumber\\
&
+C\int_0^t\!\left[\|X_{s}^{\vare}-X_{s(\delta)}^{\vare}\|^2_{H_1}\!+\!\|\bar{X}_s-\bar X_{s(\delta)}\|^2_{H_1}\right]ds
\!+\!\tilde{\sJ}^{\vare}_1(t)\!+\!2\tilde{\sJ}^{\vare}_1(t)\!+\!3\tilde{\sJ}^{\vare}_2(t)+\!\tilde{\mathcal{M}}^{\vare}(t),\nonumber\\
\end{align*}
where
\begin{align}\label{J1-J3}\begin{split}
&\tilde\sJ^{\vare}_1(t):=\int_{0}^{t}\left\|A(s/\vare,\tilde{\bar{X}}_s)-\bar{A}(\tilde{\bar{X}}_s)\right\|_{V^*_1}\|\tilde{\bar X}_{s}^{\vare}-\tilde{\bar{X}}_{s}\|_{V_1} ds,\\
&\tilde\sJ^{\vare}_2(t):=\int_0^t\left\langle\left[\bar F(s/\vare,\bar X_{s(\delta)}^\varepsilon)-\bar{F}(\bar X^\varepsilon_{s(\delta)})\right], \bar X_{s(\delta)}^{\vare}-\bar{X}_{s(\delta)}\right\rangle_{H_1} ds,\\
&\tilde\sJ^{\vare}_3(t):=\int_0^t\|G_{1}(s/\vare,\bar{X}_{s(\delta)})-\bar G_1(\bar{X}_{s(\delta)})
\|_{L_{2}(U_1, H_1)}^2ds.
\end{split}
\end{align}
Define $\bar{\tau}^{\vare}_R:=\bar{\tau}^{1,\vare}_R\wedge\bar{\tau}^2_R$ with $\bar{\tau}^{1,\vare}_R$ and $\bar{\tau}^2_R$ are defined as follows:
\begin{align*}
&\bar{\tau}^{1,\vare}_R:=\inf\left\{t\geq 0:\int_0^t(1+\|\tilde{\bar{X}}^{\varepsilon}_s\|_{V_1}^{\alpha})(1+\|\bar{X}^{\varepsilon}_s\|_{H_1}^{\beta})ds\geq R~\text{or}~\|\bar{X}^{\varepsilon}_t\|_{H_1}\geq R\right\},\\
&\bar{\tau}^2_R:=\inf\left\{t\geq 0:\int_0^t(1+\|\tilde{\bar{X}}_s\|_{V_1}^{\alpha})(1+\|\bar{X}_s\|_{H_1}^{\beta})ds\geq R ~\text{or}~\|\bar{X}_t\|_{H_1}\geq R\right\}.
\end{align*}
Then by the same argument as used in the proof \textbf{Step 1} of Proposition \ref{ESX}, we can easily obtain
\begin{align}\label{0528:10}
\mathbb{E}\left(\sup_{t\in[0, T\wedge\bar{\tau}^{\vare}_R]}
\|\bar X_{t}^{\vare}-\bar{X}_{t}\|^2_{H_1}\right)
\leq&
C_{T}(1+\|x\|^m_{H_1}+\|y\|^m_{H_2})\delta^{1/4}\nonumber\\
&+C_{R,T}\sum_{i=1}^3\EE\left(\sup_{t\in[0, T\wedge\bar{\tau}^{\vare}_R]}
\left|\tilde{\sJ}^{\vare}_i(t)\right|\right).
\end{align}

Note that by H\"older's inequality and the definition of $\bar{\tau}_R^\varepsilon$, we have
\begin{align}\label{0721:01}
&
\EE\left(\sup_{t\in[0, T\wedge\bar{\tau}^{\vare}_R]}
\left|\tilde\sJ^{\vare}_1(t)\right|\right)\nonumber\\
&
=\EE\left(\sup_{t\in[0, T\wedge\bar{\tau}^{\vare}_R]}
\int_{0}^{t}\left\|A(s/\vare,\tilde{\bar{X}}_s)
-\bar{A}(\tilde{\bar{X}}_s)\right\|_{V^*_1}
\|\tilde{\bar X}_{s}^{\vare}-\tilde{\bar{X}}_{s}\|_{V_1} ds\right)\nonumber\\
&
\leq C\EE\left(\sup_{t\in[0, T\wedge\bar{\tau}^{\vare}_R]}
\left(\int_0^t\left\|A(s/\vare,\tilde{\bar{X}}_s)
-\bar{A}(\tilde{\bar{X}}_s)
\right\|_{V^*_1}^{\frac{\alpha}{\alpha-1}}ds
\right)^{\frac{\alpha-1}{\alpha}}
\left(\int_0^t\|\tilde{\bar X}_{s}^{\vare}\|_{V_1}^\alpha
+\|\tilde{\bar{X}}_{s}\|_{V_1}^\alpha ds \right)^{\frac{1}{\alpha}}
\right)\nonumber\\
&
\leq C_R\left(\EE
\int_0^T\left\|A(s/\vare,\tilde{\bar{X}}_s)
-\bar{A}(\tilde{\bar{X}}_s)
\right\|_{V^*_1}^{\frac{\alpha}{\alpha-1}}ds
\right)^{\frac{\alpha-1}{\alpha}}.
\end{align}
And it follows from \ref{A4}, Remark \ref{Rem0722},
\eqref{F3.1} and Lemma \ref{L4.5} that
\begin{align*}
\EE
\int_0^T\left\|A(s/\vare,\tilde{\bar{X}}_s)
-\bar{A}(\tilde{\bar{X}}_s)
\right\|_{V^*_1}^{\frac{\alpha}{\alpha-1}}ds
&
\leq C\EE\int_0^T\left(1+\|\tilde{\bar{X}}_s\|_{V_1}^\alpha\right)
\left(1+\|\bar{X}_s\|_{H_1}^\beta\right)ds\\
&
\leq C\left(1+\|x\|_{H_1}^{\beta+2}+\|y\|_{H_2}^{\beta+2}
\right),
\end{align*}
which along with \eqref{0721:01} and Lebesgue dominated convergence theorem implies that

\begin{align}
\lim_{\vare\rightarrow 0}\EE\left(\sup_{t\in[0, T\wedge\bar{\tau}^{\vare}_R]}
\left|\tilde\sJ^{\vare}_1(t)\right|\right)=0.\label{tildeJ1}
\end{align}

Note that
\begin{align}
\tilde \sJ^{\vare}_2(t)
\leq&
\sum_{k=0}^{[t/\delta]-1}
\left|\int_{k\delta}^{(k+1)\delta}\langle \bar{F}(s/\vare,\bar X_{k\delta}^{\vare})-\bar{F}(\bar X^{\vare}_{k\delta}), \bar X_{k\delta}^{\vare}-\bar{X}_{k\delta}\rangle_{H_1} ds\right|\nonumber\\
&
+\left|\int_{t(\delta)}^{t}\langle \bar{F}(s/\vare,\bar X_{s(\delta)}^{\vare})
-\bar{F}(\bar X^{\vare}_{s(\delta)}), \bar X_{s(\delta)}^{\vare}-\bar{X}_{s(\delta)}\rangle_{H_1} ds\right|
=:\tilde \sJ^{\vare}_{21}(t)+\tilde \sJ^{\vare}_{22}(t). \label{S}
\end{align}
For the term $\tilde \sJ^{\vare}_{22}(t)$, by H\"older's inequality, \eqref{F3.1} and Lemma \ref{L4.5},
it is easy to see
\begin{align}
\EE\left[\sup_{t\in [0, T\wedge{\bar{\tau}^{\vare}_R}]}\tilde \sJ^{\vare}_{22}(t)\right]
\leq&
C\left[\EE\sup_{t\in [0, T\wedge{\bar{\tau}^{\vare}_R}]}\left(\|\bar X^{\vare}_t\|_{H_1}^2+\|\bar{X}_{t}\|^2_{H_1}\right)\right]^{1/2}\left[\EE\int_{0}^{T\wedge{\bar{\tau}^{\vare}_R}}(1+\|\bar X^{\vare}_{s(\delta)}\|^2_{H_1})ds\right]^{1/2}\delta^{1/2}\nonumber\\
\leq&
C_{R,T}\delta^{1/2}.\label{S2}
\end{align}

For the term $\tilde \sJ^{\vare}_{21}(t)$, we have
\begin{align*}
&
\mathbb{E}\left[\sup_{t\in[0, T\wedge{\bar{\tau}^{\vare}_R}]}\tilde \sJ^{\vare}_{21}(t)\right]\\
\leq&
\mathbb{E}\left(\sum_{k=0}^{[(T\wedge{\bar{\tau}^{\vare}_R})/\delta]-1}
\left|\int_{k\delta}^{(k+1)\delta}\langle \bar{F}(s/\vare,\bar X_{k\delta}^{\vare})-\bar{F}(\bar X_{k\delta}^{\vare}), \bar X_{k\delta}^{\vare}-\bar{X}_{k\delta}\rangle_{H_1} ds\right|\right)\nonumber\\
\leq&
\frac{C_{T}}{\delta}\max_{0\leq k\leq[T/\delta]-1}\mathbb{E}\left[\left|\int_{k\delta}^{(k+1)\delta}
\langle \bar{F}(s/\vare,\bar X_{k\delta}^{\vare})-\bar{F}(\bar X_{k\delta}^{\vare}), \bar X_{k\delta}^{\vare}-\bar{X}_{k\delta}\rangle_{H_1} ds\right|\cdot 1_{\{k\delta<{\bar{\tau}^{\vare}_R}\}}\right]\nonumber\\
\leq&
\frac{C_{T}}{\delta}\!\max_{0\leq k\leq[T/\delta]-1}\!\mathbb{E}\left[\|\bar X^{\vare}_{k\delta}-\bar X_{k\delta}\|_{H_1}\left\|\int_{k\delta}^{(k+1)\delta}
\bar{F}(s/\vare,\bar X_{k\delta}^{\vare})-\bar{F}(\bar X_{k\delta}^{\vare})ds\right\|_{H_1}\cdot 1_{\{k\delta<{\bar{\tau}^{\vare}_R}\}}\right]\nonumber\\
=&
C_T\!\max_{0\leq k\leq[T/\delta]-1}\!\mathbb{E}\left[\|\bar X^{\vare}_{k\delta}-\bar X_{k\delta}\|_{H_1}\left\|\frac{\vare}{\delta}\int_{\frac{k\delta}{\vare}}^{\frac{k\delta}{\vare}+\frac{\delta}{\vare}}
\bar F(s,\bar X_{k\delta}^{\vare})ds-\bar{F}(\bar X_{k\delta}^{\vare})\right\|_{H_1}\cdot 1_{\{k\delta<{\bar{\tau}^{\vare}_R}\}}\right].\nonumber
\end{align*}

By \eref{AsyF} in Remark \ref{R2.5}, we have for any $0\leq k\leq[T/\delta]-1$,
\begin{align*}
\left\|\frac{\vare}{\delta}\int_{\frac{k\delta}{\vare}}^{\frac{k\delta}{\vare}+\frac{\delta}{\vare}}
\bar F(s,\bar X_{k\delta}^{\vare})ds-\bar{F}(\bar X_{k\delta}^{\vare})\right\|_{H_1}\cdot 1_{\{k\delta<{\bar{\tau}^{\vare}_R}\}}
\leq \phi^{R}_{2}(\delta/\vare).
\end{align*}
Hence, we get
\begin{align}\label{0528:07}
\mathbb{E}\left(\sup_{t\in[0, T\wedge{\bar{\tau}^{\vare}_R}]}\tilde \sJ^{\vare}_{21}(t)\right)
\leq C_{R,T}\phi^{R}_{2}(\delta/\vare).
\end{align}
Combining \eqref{S}, \eqref{S2} and \eqref{0528:07}, we have
\begin{align}\label{0528:08}
\EE\left(\sup_{t\in[0, T\wedge{\bar{\tau}^{\vare}_R}]}
\left|\tilde\sJ^{\vare}_2(t)\right|\right)
\leq C_{R,T}\left(\phi^{R}_{2}(\delta/\vare)+\delta^{1/2}\right).
\end{align}
By a similar argument above and condition \eref{C1eq}, we still have
\begin{align}\label{0528:09}
\EE\left(\sup_{t\in[0, T\wedge{\bar{\tau}^{\vare}_R}]}
\left|\tilde\sJ^{\vare}_3(t)\right|\right)\leq C_{R,T}\left(\phi^R_1(\delta/\vare)
{+\delta}\right).
\end{align}

Finally, taking $\delta=\vare^{2/3}$, by \eqref{0528:10}, \eqref{0528:08}
and \eqref{0528:09}, we obtain
\begin{align*}
\mathbb{E}\left(\sup_{t\in[0, T\wedge{\bar{\tau}^{\vare}_R}]}\|\bar X_{t}^{\vare}-\bar{X}_{t}\|^2_{H_1}\right)
\leq
C_{R,T}\left[\phi^R_1(\vare^{-1/3})
+\phi^R_2(\vare^{-1/3})+\vare^{1/3}\right]
+C_{R,T}\EE\left(\sup_{t\in[0, T\wedge\bar{\tau}^{\vare}_R]}
\left|\tilde\sJ^{\vare}_1(t)\right|\right),
\end{align*}
which implies
\begin{align*}
\mathbb{E}\left(\sup_{t\in [0, T]}\|\bar X_{t}^{\vare}-\bar{X}_{t}\|^2_{H_1}\right)\leq
&
\mathbb{E}\left(\sup_{t\in [0, T]}\|\bar X^{\vare}_{t}-\bar{X}_{t}\|^2_{H_1} 1_{\{T\leq \bar \tau^{\vare}_{R}\}}\right)+\mathbb{E}\left(\sup_{t\in [0, T]}\|\bar X_{t}^{\vare}-\bar{X}_{t}\|^2_{H_1} 1_{\{T>\bar \tau^{\vare}_{R}\}}\right)\nonumber\\
\leq
&
C_{R,T}\! \left[\phi^R_1(\vare^{-1/3})+\phi^R_2(\vare^{-1/3})+\vare^{\frac{1}{3}}\right]
\!+\!C_{R,T}\EE\left(\sup_{t\in[0, T\wedge\bar{\tau}^{\vare}_R]}
\left|\tilde\sJ^{\vare}_1(t)\right|\right)\\
&
+\frac{C_{T}{\left(1+\|x\|^m_{H_1}+\|y\|^m_{H_2}\right)}}
{\sqrt{R}}.
\end{align*}
Now using \eqref{tildeJ1} and letting $\vare\rightarrow 0$ firstly, then $R\rightarrow \infty$, it follows
\begin{eqnarray*}
\lim_{\vare\rightarrow 0}\mathbb{E}\left(\sup_{t\in [0, T]}\|\bar X_{t}^{\vare}-\bar{X}_{t}\|^2_{H_1}\right)=0.
\end{eqnarray*}
Hence, using Lemmas \ref{L3.8} and \ref{L4.5}, we finally get for any $p\geq 1$,
$$
\lim_{\vare\rightarrow 0}\EE\left(\sup_{t\in [0, T]}\|\bar X_{t}^{\vare}-\bar{X}_{t}\|^{2p}_{H_1}\right)=0,
$$
which together with \eref{2.2} imply \eref{2.3} holds. The proof is complete.

\vspace{0.3cm}
\textbf{Acknowledgment}.
Mengyu Cheng is supported by the NSF of China (No. 12301223)
and the Fundamental Research Funds for the Central Universities.
Xiaobin Sun is supported by the NSF of China (Nos.
12271219, 12090010 and 12090011). Yingchao Xie is supported by the NSF of China  (No 12471139) and the Priority Academic Program Development of Jiangsu Higher Education Institutions.

\vspace{0.3cm}
\textbf{Data availability}

\vspace{0.2cm}
No data was used for the research described in the article.

%

\vspace{0.3cm}

\end{document}